\documentclass{amsart}

\usepackage{hyperref}

\usepackage{amsmath, mathrsfs, amssymb,amsthm,hyperref,centernot,comment,stmaryrd,enumerate,cancel,mathtools,bbold,pbox,multirow}

\hypersetup{pdfstartview={XYZ null null 1.25}}
\usepackage[all]{xy}
\title{Seurat games on Stockmeyer graphs}

\subjclass[2020]{Primary 	05C60 	; Secondary 05C25, 	05C90, 	03G15}
\keywords{Graph isomorphisms, Reconstruction conjecture, Stockmeyer graphs, Tally spectra, Weisfeiler-Leman algorithm, Colour refinement}
\theoremstyle{plain}
\newtheorem{thm}{Theorem}[section]
\newtheorem{prop}[thm]{Proposition}
\newtheorem{cor}[thm]{Corollary}
\newtheorem{lem}[thm]{Lemma}
\newtheorem{ex}[thm]{Example}
\theoremstyle{definition}
\newtheorem{defn}[thm]{Definition}
\newtheorem{conj}[thm]{Conjecture}

\newcommand{\ind}{\mathsf{in}}
\newcommand{\outd}{\mathsf{out}}

\newcommand{\odd}{\mathsf{odd}}
\newcommand{\G}{{\bf{G}}}
\newcommand{\M}{{\bf{MSO}}}
 \def\set#1{{\{ #1\}}}
 \def\tilg{{\widetilde{\Gamma(G)}}}

\def\ws{{winning strategy}}

\author{Rob Egrot}
\address{Faculty of Information And Communication Technology, Mahidol University}
\email{robert.egr@mahidol.ac.th}

\author{Robin Hirsch}
\address{Department of Computer Science University College London}
\email[corresponding author]{r.hirsch@ucl.ac.uk}

\begin{document}

\maketitle

\begin{abstract}
We define a family of vertex colouring games played over a pair of graphs or digraphs $(G,H)$ by players $\forall$ and $\exists$. These games arise from work on a longstanding open problem in algebraic logic. It is conjectured that there is a natural number $n$ such that $\forall$ always has a winning strategy in the game with $n$ colours whenever $G\not\cong H$. This is related to the reconstruction conjecture for graphs and the degree-associated reconstruction conjecture for digraphs. We show that the reconstruction conjecture implies our game conjecture with $n=3$ for graphs, and the same is true for the degree-associated reconstruction conjecture and our conjecture for digraphs. We   show (for any $k<\omega$)  that the 2-colour game can distinguish certain  non-isomorphic pairs of graphs that cannot be distinguished by the $k$-dimensional Weisfeiler-Leman algorithm.  We also show that the 2-colour game can distinguish the non-isomorphic pairs of graphs in the families defined by Stockmeyer as counterexamples to the original digraph reconstruction conjecture.         
\end{abstract}

\section{Introduction}

An Ehrenfeucht-Fra\"iss\'e game is played over a pair of structures by two players, $\forall$ and $\exists$,  who place matching pebbles on the two structures, and test their equivalence with respect to a first-order language whose variables correspond to the pairs of pebbles.    In this paper, we investigate a similar game, where the two players have $k$ colours which are used (and may be reused) to paint \emph{sets} of points from the two structures, rather than placing pebbles on individual points.  Section \ref{S:SG} defines the game  precisely, but we will sketch out the main idea here.   We restrict our attention to graphs, which may or not be  directed.  In the game ${\bf G}^k(G, H)$,  the first player $\forall$ is trying to prove that  graphs $G, H$ are not isomorphic, and $\exists$ is trying to prevent this. In pursuit of these competing goals, the players take it in turns to `paint' sets of vertices of a graph  using one of the $k$ colours, with the second player trying to match the move of the first as best she is able, in the other graph.  
  $\forall$ wins the game  if $\exists$ fails to match moves, either because there is a node painted by a certain combination of colours in one graph but no such node in the other, or because there is an edge in one graph, but no edge in the other graph matching the colour combinations of source and target nodes, see Section \ref{S:SG} for the full definition of the game.    Versions of this colouring game, generalised from graphs to binary structures, and also specialised to sets, are used  in \cite{EgrHirNonFin, EgrHirNote} to prove some results in algebraic logic (see Section \ref{S:ax} for a brief discussion of this).

At the moment, not much is  known about this colouring game (which we refer to as a \textbf{Seurat game}, in reference to pointillist painting), though there are some similarities with  Ehrenfeucht-Fra\"iss\'e style games for monadic second-order logic, which we discuss later.  If $\forall$ can  force a win, then the two graphs cannot be isomorphic, as with isomorphic graphs $\exists$ may always perfectly mirror $\forall$'s moves. However, it is not clear to what extent the converse holds. In other words, whether graphs exist that are not isomorphic but where nevertheless $\exists$ can play indefinitely without losing. Intuitively,  $\forall$ should be able to win more easily with more colours available, as he can force more complicated situations which must be mirrored between graphs. For example, with only a single colour his ability to win is rather limited, and it is easy to construct examples of non-isomorphic graphs where he does not have a winning strategy (the graphs may even have different cardinalities). However, it is currently not known whether there is some $n$ such that $\exists$ being able to avoid losing in  ${\bf G}^n(G, H)$ always implies $G\cong H$. In particular, we are not aware of any pair of non-isomorphic graphs where $\exists$ can avoid losing in even the game with only two colours. We can reformulate this as a conjecture as follows.
\begin{conj}\label{conj}
There is $n\geq 2$ such that for all digraphs $G,H$, if $G\not\cong H$ then $\forall$ has a winning strategy in  ${\bf G}^n(G,H)$.
\end{conj}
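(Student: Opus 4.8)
The plan is to attack Conjecture \ref{conj} directly, fixing a candidate value of $n$ and attempting to exhibit, for an arbitrary non-isomorphic pair $(G,H)$, an explicit \ws{} for $\forall$ in ${\bf G}^n(G,H)$. I would aim for $n=3$, so as to have one colour reserved for isolating a single ``active'' vertex and two further colours available for recording structural information relative to it. (A more ambitious target is $n=2$, consistent with the paper's unconditional two-colour results, but three colours give more room and already match the conditional bound obtained from the reconstruction conjecture.) The overarching idea is that, although a fixed number of colours can only realise $2^n$ distinct colour-combinations at any one moment, the freedom to \emph{repaint} across unboundedly many rounds lets $\forall$ run an adaptive refinement process that is strictly more discriminating than any static bounded-width invariant.

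First I would prove a \emph{marking lemma}: using one dedicated colour, $\forall$ can paint a singleton in $G$ and force $\exists$ to answer with a singleton in $H$, since any larger response immediately produces a colour-combination borne by some vertex of one graph and by no vertex of the other. This effectively lets $\forall$ pebble a vertex $v$ together with a partner $f(v)$ in $H$. Next I would prove a \emph{refinement lemma}: with the remaining colours $\forall$ can separate the neighbours of the marked vertex from its non-neighbours, and by re-marking and iterating he can drive the induced colouring down to a stable partition that is at least as fine as colour refinement and, through the adaptive choice of marks, strictly finer. The unconditional results already established in the paper --- that the two-colour game distinguishes pairs surviving $k$-dimensional Weisfeiler-Leman for every $k$, and that it separates the Stockmeyer families --- are exactly instances of this extra discriminating power, and I would use them as templates for the general refinement step.

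The crux is the \emph{completeness step}: to show that if $\forall$ can \emph{never} force a colour-combination or edge mismatch in ${\bf G}^n(G,H)$, then necessarily $G\cong H$. Here one would try to read an isomorphism off the stable game-invariant partition, in the spirit of recovering a canonical form from a complete invariant, exploiting the partial pebble-correspondence supplied by the marking lemma together with the set-quantification (genuinely $\M$-flavoured) power of set-painting. I expect this step to be the main --- and, candidly, the currently unresolved --- obstacle: the Cai--F\"urer--Immerman construction and the general failure of any fixed-width counting logic to capture finite-graph isomorphism mean that no purely static reading of the invariant can succeed, so the argument must genuinely exploit the game's unbounded-round, adaptive character while keeping the colour budget fixed and uniform over all pairs. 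Proving that finitely many colours always suffice, uniformly, is precisely the open content of the conjecture, and the only reductions presently available are to the (also open) reconstruction and degree-associated reconstruction conjectures. Accordingly, the realistic fallback, should the direct completeness argument resist, is the conditional route developed elsewhere in the paper, deriving the conjecture with $n=3$ from reconstruction for graphs and from degree-associated reconstruction for digraphs.
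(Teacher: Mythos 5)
This statement is a \emph{conjecture}, and the paper offers no proof of it; the only results in its direction are conditional (Theorems \ref{T:stronger} and \ref{T:stronger2}, which derive it with $n=3$ from the degree-associated reconstruction conjecture for digraphs and from the reconstruction conjecture for graphs, respectively, both of which are themselves open). Your proposal, by your own admission, does not close the gap either: the ``completeness step'' you isolate --- showing that if $\forall$ can never force a mismatch then $G\cong H$ --- is precisely the entire content of the conjecture, and nothing in your sketch supplies it. Your marking and refinement lemmas are real and essentially already present in the paper: forcing $\exists$ to answer a singleton with a singleton of matching tally is Proposition \ref{P:constraints} (S1)--(S2), and the iterated neighbourhood refinement is what Lemma \ref{L:2-colours} and Corollary \ref{C:2-colours} formalise via tally-sequences. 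But these only yield \emph{necessary} conditions on $\exists$'s play; they produce an invariant that $\exists$ must preserve, and the paper's own Example \ref{E:tally1} and Example \ref{E:tally2} show that equality of such invariants (even together with isomorphism of the induced subgraphs) does not imply isomorphism. So ``reading an isomorphism off the stable partition'' cannot work as a static argument, and you give no mechanism for the adaptive, unbounded-round argument you say is needed.

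Concretely, the step that would fail is the claim that the refinement process driven by three colours terminates in a partition from which an isomorphism can be recovered whenever no mismatch is ever forced. The Cai--F\"urer--Immerman obstruction you cite rules out any fixed-width counting-logic invariant being complete, and while the paper shows $\G^2$ beats $k$-WL on those specific examples, that is an ad hoc strategy exploiting the gadget structure, not a general completeness argument. Your fallback to the conditional route is exactly what the paper does, but a conditional reduction to an open conjecture is not a proof of the statement. The correct conclusion is that the statement remains unproved, both in the paper and in your proposal.
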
 

This conjecture has a certain thematic similarity to the famous reconstruction conjecture \cite[p. 29]{Ulam60}, about which much has been written (see e.g. \cite{O'Ne70,Har74,Stock88,Bon91,LauSca16} for exposition). This was pointed out to the second listed author by A. Dawar. This connection is most explicit when the reconstruction conjecture is phrased in the following way.

\begin{defn}\label{D:reconstruction}
The \textbf{reconstruction conjecture} is that if $G$ and $H$ are non-isomorphic (undirected) graphs with at least one having at least three vertices, then there is a graph $F$ such that the number of point-deleted subgraphs of $G$ that are isomorphic to $F$ is not equal to the number of point-deleted subgraphs of $H$ that are isomorphic to $F$.
\end{defn}

The multiset of point-deleted subgraphs (up to isomorphism) of a graph is known as its \textbf{deck}, and so the reconstruction conjecture phrased this way says that we can detect if two graphs with at least three vertices are not isomorphic by comparing their decks, just as our conjecture above says that if two digraphs are not isomorphic we can detect this in a certain game. There is a version of the reconstruction conjecture for digraphs obtained by replacing `graph' with `digraph' everywhere in Definition \ref{D:reconstruction}. The reconstruction conjecture for digraphs is known to be false, with families of counterexamples provided in \cite{Stock77,Stock81}. The original graph reconstruction conjecture, however, remains open.

While the reconstruction conjecture for digraphs is false, a related conjecture does remain open. This is often referred to as the \textbf{degree-associated reconstruction conjecture for digraphs} (see Definition~\ref{D:deg_rec} below), and is due to Ramachandran \cite{Rama81,Rama83}. This conjecture is stronger than the reconstruction conjecture for graphs (if we view graphs as special kinds of digraphs), but weaker than the reconstruction conjecture for digraphs (which, as mentioned previously, is false).   We show in Section \ref{S:rec} that if the degree-associated reconstruction conjecture for digraphs is true then so is Conjecture \ref{conj}, and similarly if the original reconstruction conjecture is true then  the version of Conjecture \ref{conj} for undirected graphs holds (both with $n=3$). It follows immediately that a counterexample to Conjecture \ref{conj} or its graph analogue would disprove the degree-associated reconstruction conjecture or the reconstruction conjecture, respectively.

\medskip

The bulk of this paper is devoted to investigating  Seurat games.   Section \ref{S:ax} describes the origin of these Seurat games in a problem to do with axiomatisations of the class of representable relation algebra.
The basic definitions of Seurat games are provided in Section \ref{S:SG}, along with several general results about strategy, and comparisons with certain games for monadic second-order logic. To demonstrate the power of these games to distinguish non-isomorphic graphs, and provide support for Conjecture \ref{conj}, we provide two main examples. 

First, in Section \ref{WL} we study the graph constructions from \cite[Section 6]{CFI92}. These constructions were introduced to demonstrate that the $k$-dimensional Weisfeiler-Leman algorithm ($k$-WL for short - see e.g. \cite{Kief20} for a summary) is not sufficient to distinguish all non-isomorphic graphs for any $k$. Our result here is to use this construction to produce, for each $k$, a pair of graphs that cannot be distinguished by $k$-WL, but can be distinguished in the 2-colour Seurat game. 
     
We also consider certain pairs of non-isomorphic digraphs from  \cite{Stock81}, which we call  \textbf{Stockmeyer graphs}.  Stockmeyer proved that these pairs are not isomorphic to each other but share the same deck.  We describe these Stockmeyer graphs in Section~\ref{S:stock}.  Although these pairs share the same deck, we prove in Section \ref{S:games} that $\forall$ has a winning strategy in the 2-colour Seurat game over them.        Proving this is not entirely straightforward, and to this end we define in Section \ref{S:tally} something called a \textbf{tally-sequence}. This extends the notion of the degree sequence of a vertex, and is related to the \textbf{colour refinement} algorithm (which is the $k=1$ case of $k$-WL). The benefit of computing tally-sequences over the more well known colour refinement algorithm is that we can prove that $\exists$ must, in a technical sense to be defined later, match tally sequences in her moves if she doesn't want to lose. It is by exploiting this constraint that we are able to prove that $\forall$ has a winning strategy in the 2-colour game over any Stockmeyer pair. It is not currently known whether $\exists$ must also match colour refinement colours. This is discussed in more detail in Section \ref{S:tally}, particularly just after Lemma \ref{L:2-colours}.  
As mentioned previously, further support for Conjecture \ref{conj} is provided in Section \ref{S:rec}, which contains a discussion of the relationship between the reconstruction conjectures and our conjecture about Seurat games.

\section{Axiomatising  the class of Representable Relation Algebras}\label{S:ax}

Seurat games arise from a  problem in Relation Algebra, which we outline in this section.  The material here is not required in order to read the rest of the paper.   A historical overview of Relation Algebra, whose origins go back to the work of De Morgan in the mid 19th century, can be found in \cite{Mad91,S-A05}, and also in the introduction to \cite{HirHod02}. We will content ourselves here with only a brief synopsis, beginning our story in the early 1940s  with Tarski's proposal \cite{Tar41} for a specialized calculus of binary relations. This calculus took the form of a kind of pared-down formal logic, with axioms extending those for propositional logic, and two rules of inference. The formal treatment of binary relations can of course be handled in a fragment of first-order logic, with which Tarski was well familiar. However, Tarski had a particular interest in an elegant formalization specific to binary relations, as set theory is developed using a single binary relation, so by studying the logic of binary relations he was, in a sense, studying set theory, and thus all of mathematics.

Over the course of the decade, Tarski and his circle investigated several algebraic versions of his calculus, before settling on a form that  appeared in print in \cite{JT48, ChinTar51}.
 In this abstract and paper, the modern definition of a relation algebra is set down. This modern formulation defines relation algebras as a variety (i.e. equationally defined class) of algebraic structures. The details are not important here, but a relation algebra in Tarski's sense is a Boolean algebra with additional operators in which a finite number of additional equations hold. This turns out to be capable of expressing all properties of binary relations involving at most three variables. 

Since Tarski intended his calculus of relations to capture the logic of binary relations, a natural question to ask is whether relation algebras as described here successfully capture all and only the true properties of binary relations, or at least those true properties of binary relations expressible with three variables. Unfortunately, the answer to this question was fairly quickly discovered to be ``no" \cite{Lyn50}. We omit the details, but the root of the problem is that not every relation algebra is isomorphic to a `concrete' algebra of binary relations over a set. The relation algebras which \emph{are} so isomorphic are therefore distinguished from relation algebras in general by being termed \textbf{representable}. The class of representable relation algebras ($RRA$) is thus a strict subclass of the class of all relation algebras ($RA$), and, rephrasing, the problem is that not every relation algebra is representable. 

What \emph{is} captured by $RA$ can be summarized by the following two facts: 
\begin{enumerate}[(1)]
\item\label{en:1} Every equation in the language of relation algebras can be translated into a first-order statement about binary relations involving at most three variables, and conversely, every first-order statement about binary relations involving at most three variables can be translated into an equation in the language of relation algebras. 
\item \label{en:2} An equation is true in all relation algebras if and only if its translation into a first-order statement about binary relations is \textbf{provable} using at most four variables. 
\end{enumerate}

According to historical remarks in \cite[pp. 88–9] {TG87} and \cite[pp. 28–9, 529]{Mad06} \/ \eqref{en:1} was proved sometime in 1942--1943 and first published in \cite[Theorems 3.9(viii)(ix)]{TG87}. \/ \eqref{en:2} was attributed to \cite{Mad78} by  \cite[pp. 92–3, 209]{TG87} and first published in \cite[Theorem 24]{Mad:89}.
Tarski's choice of axioms for $RA$ turns out to be essentially optimal for a finite theory. The argument is too technical to go into here, but the result is that to define relation algebras so that the statement obtained from (2) above by replacing `four' with `five' holds would require an infinite number of additional axioms.  In other words, while validity over $RA$ does not capture all true properties of binary relations that can be stated with three variables, it does capture, for classical proof systems, all true three variable  properties provable with four variables, and no finite set of axioms can capture the true three variable properties classically provable with five variables. It's hard to find a precise statement of this fact in the literature,  but the argument can be reconstructed by reading \cite[Section 6]{HirHod01II} and following up some of the references to be found there.

From the point of view of `true properties of binary relations' the class of main interest is $RRA$, so it is natural to look for axioms for this class. It turns out that $RRA$ is also a variety \cite{Tar55}, and is even recursively axiomatizable by equations \cite[Theorem 8.4]{HirHod02}, though axiomatizing it requires an infinite number of equations, as mentioned above. Aside from this good news about the existence of an equational axiomatization, most news about the axiomatizability of $RRA$ turns out to bad. For example, $RRA$ is not even finitely axiomatizable in first-order logic \cite{Mon64}, and it can't be axiomatized by equations using only a finite number of variables \cite[Theorem 3.5.6]{Jon91}, nor by any set of equations only a finite number of which are non-canonical \cite{HodVen05}. An outstanding question is whether it can be axiomatized in first-order logic using only a finite number of variables. This is mentioned as Problem 17.4 in \cite{HirHod02}, which also provides a strategy for a potential proof that no such axiomatization exists (see page 625 of that book, and also the brief discussion in \cite[p491]{S-A05}). Unfortunately, the strategy as described there doesn't quite work, in the sense that solving the problem defined in the book does not lead to a proof that $RRA$ has no finite variable first-order axiomatization. Corrections to this problem are discussed in \cite{EgrHirNote}, where  the following is proved.

\begin{thm}\label{T:ax}
Suppose there exist two finite digraphs graphs $G$ and $H$ such that,
\begin{enumerate}[1.] 
\item \label{en:strat} $\exists$ has a winning strategy in  ${\bf G}^k(G,H)$ ,
\item every partial homomorphism $\{(i,j),(i',j')\}$ where $i\neq i'\in H$ of $H$ to itself extends to a full homomorphism on $H$, and 
\item \label{en:no h}  there are $i\neq i'\in G$ and $j,j'\in H$ such that $\{(i,j),(i',j')\}$ is a partial homomorphism that does not extend to a homomorphism $G\to H$.
\end{enumerate}  
Then $RRA$ has no  $(k-3)$-variable first-order axiomatization.
\end{thm}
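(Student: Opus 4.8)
The plan is to reduce the statement to a standard non-axiomatizability criterion and then to build the two witnessing algebras from the combinatorial data in $G$ and $H$. Recall that to show $RRA$ has no $(k-3)$-variable first-order axiomatization it suffices to produce two relation algebras, one representable and one not, that no first-order sentence using at most $k-3$ variables can separate: for if $RRA$ were the class of models of a set $\Sigma$ of such sentences, then the representable algebra would satisfy $\Sigma$, hence so would its $(k-3)$-variable-equivalent partner, forcing the latter into $RRA$, a contradiction. By the usual correspondence, indistinguishability by $(k-3)$-variable sentences is exactly the assertion that $\exists$ wins the $(k-3)$-pebble Ehrenfeucht-Fra\"iss\'e game between the two algebras, so the task is to transport a winning strategy from the Seurat game to a pebble game.

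To build the algebras I would let $H$ play the role of a fixed palette and attach to each digraph $D$ (for $D=G$ and $D=H$) a relation algebra $\mathfrak{A}_D$, realised as the complex algebra of an atom structure encoding the vertices and edges of $D$. The construction should be arranged so that a representation of $\mathfrak{A}_D$ amounts to a globally consistent labelling of pairs of representing points that restricts locally to homomorphisms $D\to H$; in this way representability of $\mathfrak{A}_D$ is governed precisely by whether partial homomorphisms $D\to H$ extend to full ones, assembled step by step.

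With such a construction in place, the second hypothesis --- that every two-point partial homomorphism of $H$ into itself extends to a full homomorphism --- drives the step-by-step construction and yields a representation of $\mathfrak{A}_H$, so $\mathfrak{A}_H$ is representable. Dually, condition \eqref{en:no h} supplies a two-point partial homomorphism $G\to H$ that cannot be completed, and this local obstruction blocks any representation of $\mathfrak{A}_G$, so $\mathfrak{A}_G$ is not representable. The pair $(\mathfrak{A}_H,\mathfrak{A}_G)$ is therefore a candidate witness. The $3$ in the offset $k-3$ is the intrinsic dimension of relation-algebra composition: certifying a triangle $a\le b\,;c$ requires three points, and these three coordinates are exactly what is consumed when a pebble game on the algebras is read off from the colouring game on the graphs.

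The crux, and the step I expect to be the main obstacle, is condition \eqref{en:strat}: converting $\exists$'s winning strategy in ${\bf G}^k(G,H)$ into a winning strategy for $\exists$ in the $(k-3)$-pebble game on $\mathfrak{A}_G$ and $\mathfrak{A}_H$. Here I would set up a dictionary in which the $k$ colours painting sets of vertices correspond to the resources of the pebble game, with three of them absorbed by the ambient triangle needed to name an atom, leaving $k-3$ genuine pebbles. The delicate point is that a Seurat move paints a whole set of vertices whereas a pebble names a single element, so the translation must show that $\exists$'s set-valued responses restrict coherently to the point choices the pebble game demands, and that the colour-combinations accumulated on vertices faithfully track the atomic types of pebbled tuples. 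Once this dictionary is shown to preserve winning, condition \eqref{en:strat} yields $(k-3)$-variable equivalence of $\mathfrak{A}_H$ and $\mathfrak{A}_G$, and the reduction of the first paragraph completes the proof. I expect nearly all the genuine work to lie in making this translation precise and in nailing down the bookkeeping of the constant $3$.
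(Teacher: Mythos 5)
First, a point of comparison: the paper does not actually prove Theorem~\ref{T:ax} --- it is quoted as a result established in \cite[Section 4]{EgrHirNonFin}, and no argument for it appears in this paper. So there is no in-paper proof to measure your attempt against, and your proposal can only be judged on its own terms.

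On those terms, what you have written is a proof plan rather than a proof, and every step that carries real mathematical weight is deferred. The reduction in your first paragraph (two relation algebras, one representable and one not, that are $(k-3)$-variable equivalent) is the correct and standard Monk-style criterion, and your intended division of labour --- hypothesis 2 of the theorem yielding representability of an algebra built from $H$, hypothesis \eqref{en:no h} blocking representability of an algebra built from $G$, and hypothesis \eqref{en:strat} yielding the $(k-3)$-variable equivalence --- is almost certainly the right shape. But three essential things are missing. (i) You never define the atom structures: "a relation algebra realised as the complex algebra of an atom structure encoding the vertices and edges" is not a construction, and the claim that representability of such an algebra is \emph{equivalent} to the step-by-step extendability of partial homomorphisms into $H$ is a nontrivial theorem that you assert rather than prove; in particular it is not obvious that the single non-extendable two-point partial homomorphism of \eqref{en:no h} kills \emph{every} representation rather than just one attempted construction. (ii) The translation from the Seurat game to the pebble game is exactly where you say the work lies, and you give no mechanism for it: a Seurat move colours a \emph{set} of vertices while a pebble move names one element of an \emph{algebra} (whose elements are sets of atoms, not vertices), so the dictionary has to match monadic colourings of graph vertices with Boolean combinations of atoms, and nothing in the proposal indicates how $\exists$'s set-valued responses determine legal pebble placements or why the colour palettes control atomic types of pebbled tuples. (iii) The offset $3$ is justified only by the heuristic "composition needs three points"; without the explicit translation there is no way to verify that the count is $k-3$ rather than $k-1$ or $k-4$. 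As it stands the proposal identifies the right target and the right hypotheses-to-conclusions matching, but proves nothing; the entire content of the theorem lives in the three gaps above.
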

A condition that suffices to establish part  \eqref{en:no h} in the cases of interest is that there is no homomorphism from $G$ to $H$. This condition does not imply \eqref{en:no h} for all graphs, because  if $G$ is complete and $H$ is edgeless then there is no homomorphism, yet \eqref{en:no h} fails since there are no partial homomorphisms of size two.   However, if we assume \eqref{en:strat}, this exceptional case is excluded, and so the absence of a homomorphism does imply \eqref{en:no h}.
  
 \cite{EgrHirNonFin} also uses Seurat games to prove another negative result about axiomatizations of $RRA$, namely that any first-order axiomatization requires sentences of arbitrary quantifier depth.

Note that from the point of view of Theorem \ref{T:ax} we hope Conjecture \eqref{conj} is false, as it is $\exists$ we wish to have a winning strategy in games over pairs of non-isomorphic graphs.   However, it follows from our results here that the falsity of \eqref{conj} would disprove the degree-associated reconstruction conjecture, so we do not expect this to be simple.  
But, more optimistically, thinking about the colouring game could potentially provide insight leading to a disproof of one or both of the reconstruction conjectures, if indeed one or both are false.

\section{Seurat games}\label{S:SG}

A \textbf{Seurat game} $\G^k(G, H)$  is played by two players, $\forall$ and $\exists$, using a set ${\bf Col}$ of $k$ colours (where $k\geq 1$), over a pair of graphs $(G,H)$. We assume that all graphs are finite, and we allow self-edges (loops), but disallow multiple edges.    A position $(g, h)$ in the game consists of a pair of functions $g:{\bf Col}\rightarrow \wp(G),\; h:{\bf Col}  \rightarrow\wp(H)$ (here and elsewhere we identify a graph with its set of nodes).
There are $\omega$ rounds in a play of the game.  In each round, if the current position is $(g, h)$, \/  $\forall$ chooses a colour $c\in{\bf Col}$,  either graph $G$ or $H$, and a subset of the nodes of his chosen graph, \/ player $\exists$ chooses a subset of the nodes of the other graph.    The new position at the end of the round is $(g',h')$ where $g', h'$ are identical to $g, h$ (respectively) on colours other than $c$, and $g'(c) \subseteq G,\; h'(c)\subseteq H$ are given by the two chosen sets.   Note that reusing a colour erases its first use, so, for example, if $\forall$ colours set $S$ of vertices of $G$ red, and then later colours a set $T$ of vertices of $G$ red, the vertices in $S$ are no longer red, unless they are also in $T$. At the end of each round, each vertex of $G$ and $H$ will be coloured with between 0 and $k$ colours. A \textbf{palette} is a subset of ${\bf Col}$. When we talk about the palette of a vertex we mean the set of colours applied to it (which could be empty). Given a palette $P$ of colours we define the \textbf{range} of $P$ in $G$, which we denote $P^G$, to be the set of vertices of $G$ whose palette is $P$, and we define the palette of $P$ in $H$, denoted $P^H$, similarly. The game opens with round 0. We say that $\forall$ wins in round $n$ if at the beginning of that round, any of the following conditions is satisfied:
\begin{enumerate}[(C1)]
\item There is a palette $P$ such that $P^G$ is empty and $P^H$ is not, or vice versa.
\item There are palettes $P_1$ and $P_2$ such that there is an edge from $P^G_1$ to $P^G_2$ but no edge from $P^H_1$ to $P^H_2$, or vice versa.
\end{enumerate}
Initially, all colours colour the empty set of nodes, so it is impossible for $\exists$ to lose in round 0, though if we used a variation of the game allowing other starting configurations then this would not necessarily be the case.  Observe that the rules of the Seurat game apply equally well to directed and undirected graphs. Now, $\exists$ cannot win outright, but we say she has a \textbf{winning strategy} if she can play in such a way that she never loses (i.e. so that (C1) and (C2) are never true). Alternatively, $\forall$ has a winning strategy if he can guarantee that he will win in a finite number of rounds. Note that, as a consequence of K\"onig's Tree Lemma \cite{Kon26}, exactly one player has a winning strategy in each game. 

Clearly, if $G\cong H$, then $\exists$ has a winning strategy in $\G^k(G, H)$ for all values of $k$, as she can just copy $\forall$'s moves. It is not known for what values of $k$, if any, the converse is true. In other words, is there a value of $k$ such that whenever $\exists$ has a winning strategy in $\G^k(G,H)$ it is necessarily the case that $G\cong H$? If so, then clearly for all $k'\geq k$ if $\exists$ has a \ws\ in $\G^{k'}(G, H)$ then she also has a \ws\ in $\G^k(G, H)$ as we remarked before, hence $G\cong H$.  Thus, if the converse implication holds for some $k$ then it also holds for all $k'\geq k$. On the other hand, there are no known examples of $G$ and $H$ with $G\not\cong H$ but where $\exists$ has a winning strategy in $\G^k(G,H)$ for even $k= 2$. 

The case $k=1$ with a single colour is rather straightforward, however.  For the one colour game, observe in each round that the colour is deleted from both graphs before being reassigned, so the game effectively restarts from the beginning. Hence if $\exists$ has a strategy to ensure surviving just one round then she has a \ws\ for the $\omega$ length game.    Suppose $G$ and $H$ are edgeless graphs with   two or more vertices. In any round the set of nodes chosen by $\forall$  (in either graph) has either no nodes (empty), all nodes, or some but not all nodes of the graph included.  Provided $\exists$ chooses a set of nodes of the other graph correspondingly including none, all, or some but not all nodes, she will survive.  So there are plenty of pairs of non-isomorphic graphs that cannot be distinguished by the one colour game.

Some of the work in \cite{EgrHirNote} implicitly adds some winning conditions for $\forall$ to the Seurat game for graphs, though this is not immediately apparent as in that paper Seurat games are defined for binary structures (i.e. relational structures with one or more binary relations). We call the result of adding these conditions the \textbf{strong Seurat game}, and denote by $\hat{\G}^k(G,H)$ (for $k$ colours). This is exactly like the Seurat game just described, but with the following additional winning conditions for $\forall$. 
\begin{enumerate}
\item[(C3)] There are palettes $P_1$ and $P_2$ such that every vertex in $P_2^G$ is the target of an edge out of $P_1^G$, but there is a vertex in $P_2^H$ that is not the target of any edge out of $P_1^H$, or vice versa switching $G$ and $H$.
\item[(C4)] There are palettes $P_1$ and $P_2$ such that every vertex in $P_1^G$ is the origin of an edge into $P_2^G$, but there is a vertex in $P_1^H$ that is not the origin of any edge into $P_2^H$, or vice versa switching $G$ and $H$.
\end{enumerate}

For the work in \cite{EgrHirNote} with binary structures, conditions (C3) and (C4) are not needed explicitly as a graph can be augmented with an additional binary relation corresponding to `missing' edges. In these augmented graphs, if (C3) or (C4) is triggered for edges then (C2) is triggered for `missing' edges, so the extra conditions add nothing. 

For ordinary graphs, the strong Seurat $\hat{\G}^k(G,H)$ game is obviously at least as powerful as the Seurat game $\G^k(G,H)$ for distinguishing $G$ and $H$, but it is not clear whether it is strictly more powerful for $k\geq 2$, as we do not currently know of any non-isomorphic graphs that cannot be distinguished by $\G^2(G,H)$. When $k=1$, Example \ref{E:k1} below demonstrates that the strong game is strictly more powerful. We also have the following result indicating that $\G^k$ is, at worst, not far behind $\hat{\G}^k$.

\begin{lem}
If $\forall$ has a strategy for winning by round $n$ in $\hat{\G}^k(G,H)$, then $\forall$ also has a strategy for winning by round $n+1$ in $\G^{k+1}(G,H)$.
\end{lem}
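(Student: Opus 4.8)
The plan is to have $\forall$ simulate his winning strategy from the strong game $\hat{\G}^k(G,H)$ inside $\G^{k+1}(G,H)$, reserving the extra $(k+1)$-st colour to convert a win via (C3) or (C4) into a win via (C1) or (C2). Concretely, let $\sigma$ be a $\forall$-strategy winning by round $n$ in $\hat{\G}^k(G,H)$, and let $c^\ast$ denote the extra colour. In $\G^{k+1}(G,H)$, $\forall$ plays rounds $0,\dots,n-1$ by copying $\sigma$, only ever using the $k$ colours of the strong game and leaving $c^\ast$ colouring the empty set in both graphs. Since $c^\ast$ stays empty, the restriction of this play to the first $k$ colours is exactly a play of $\hat{\G}^k$ conforming to $\sigma$, and any response $\exists$ gives in $\G^{k+1}$ is a legal response there; hence at the start of round $n$ one of (C1)--(C4) holds. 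If (C1) or (C2) holds then, as $c^\ast$ is empty, the same condition holds in $\G^{k+1}$ and $\forall$ has already won by round $n$. So the only case to treat is when (C3) or (C4) holds but (C1) and (C2) do not.

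First I would treat (C3), in the orientation where every vertex of $P_2^G$ is the target of an edge out of $P_1^G$ while some $v\in P_2^H$ is the target of no edge out of $P_1^H$ (the reversed orientation, and (C4), are handled by swapping $G\leftrightarrow H$ and reversing edge directions). In round $n$, $\forall$ colours the single vertex $v$ with $c^\ast$ in $H$; say $\exists$ responds with $S\subseteq G$. Write $Q:=P_2\cup\{c^\ast\}$, so that $Q^H=\{v\}$ while every other palette containing $c^\ast$ has empty range in $H$, and $Q^G=S\cap P_2^G$. If $S\not\subseteq P_2^G$ then some $x\in S$ has old palette $A\neq P_2$, so $A\cup\{c^\ast\}$ has nonempty range in $G$ but empty range in $H$; and if $S=\emptyset$ then $Q^G=\emptyset\neq Q^H$. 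In either case (C1) fires and $\forall$ wins at the start of round $n+1$. Thus the only surviving response is a nonempty $S\subseteq P_2^G$, whence $Q^G=S$.

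The key step is then the edge argument. Pick any $w\in S\subseteq P_2^G$; by the $G$-part of (C3) it has an in-neighbour $u\in P_1^G$. After the move $u$ has palette $P_1$ if $u\notin S$, and palette $Q$ if $u\in S$ (which can only happen when $P_1=P_2$). In the first case there is an edge from $P_1^G$ to $Q^G$ in $G$ but none from $P_1^H$ to $Q^H=\{v\}$ in $H$, since $v$ has no in-neighbour in $P_1^H$; so (C2) fires via $(P_1,Q)$. In the second case $P_1=P_2$, hence $v\in P_1^H$, and there is an edge from $Q^G$ to $Q^G$ in $G$; the matching edge in $H$ would be a loop at $v$, which is impossible because a loop at $v$ is an edge out of $P_1^H$ with target $v$, contradicting the choice of $v$. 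Hence (C2) fires via $(Q,Q)$. Either way $\forall$ wins at the start of round $n+1$.

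I expect the main obstacle to be exactly this last case distinction: recolouring with $c^\ast$ can pull source vertices out of $P_1^G$ when $P_1=P_2$, so the naive edge from $P_1^G$ to $Q^G$ need not survive. The fix is the observation that any displaced source lands inside $Q^G$ and that the witness $v$ carries no loop, which is precisely what (C3) guarantees. The crux is to verify that $\exists$ has no escaping response in round $n$, that is, that every choice of $S$ triggers one of (C1), (C2); the three cases above ($S$ badly chosen, $S$ with an uncoloured source, $S$ with all sources recoloured) exhaust the possibilities.
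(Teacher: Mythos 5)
Your proof is correct and follows essentially the same strategy as the paper's: simulate the $\hat{\G}^k$ strategy with the first $k$ colours, then use the reserved $(k+1)$st colour to convert a win by (C3) or (C4) into a win by (C1) or (C2) by painting the witness vertex $v$. You are in fact more careful than the paper, which says only that $\exists$ ``must respond by colouring a vertex of $P_2^G$ \dots but then she loses by (C2) anyway''; your explicit treatment of an arbitrary response $S$ and of the degenerate case $P_1=P_2$ (where the source of the witnessing edge may itself be recoloured, so the (C2) violation comes from the palette pair $(Q,Q)$ together with the absence of a loop at $v$) fills in details the paper leaves implicit.
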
 
\begin{proof}
$\forall$'s strategy is to play as in his strategy for $\hat{\G}^k(G,H)$ using the first $k$ colours. If this strategy wins by either (C1) or (C2) then he has nothing more to do. Alternatively, if his strategy wins by (C3) then there are palettes $P_1$ and $P_2$ and, without loss of generality, a vertex $v$ of $P_2^H$ that is not the target of any edge coming out of $P_1^H$ in $H$, while every vertex in in $P_2^G$ is the target of an edge out of $P_1^G$. In his next move, $\forall$ colours $v$ with the as yet unused $(k+1)$th colour. Then $\exists$ must respond by colouring a vertex of $P_2^G$, or else she violates (C1), but then she loses by (C2) anyway. The (C4) case is similar. 
\end{proof}

\begin{ex}\label{E:k1}
Here we present an example demonstrating that the strong game with one colour $\hat{\G}^1$ is strictly stronger than the standard game $\G^1$. Note first that when there is only one colour, after that colour has coloured some non-empty subset of a graph, there are exactly four possible `edge types' in the graph. If we let $c$ stand for \textbf{coloured}, and $u$ stand for \textbf{uncoloured}, these edge types are $(c,c), (c,u), (u,c), (u,u)$. This produces a total of $2^4=16$ possible `edge type' combinations in a given graph, though not every graph can witness every combination.  

Let $G$ and $H$ be as in Figure \ref{F:GH}. So $G$ is the disjoint union of two 3-vertex chains, and $H$ is similar but with the addition of a disjoint 2-vertex cycle. A tedious check reveals that  a non-empty set of nodes in $G$ or $H$ can witness precisely the same 12 out of the possible 16 edge type combinations (the omitted combinations being $\{(u,u)\}$, $\{(c,u)\}$, $\{(u,c)\}$ and $\emptyset$). Thus $\exists$ has a winning strategy in $\G^1(G,H)$, as whatever move $\forall$ makes she just plays a move witnessing the same edge type combination.  

However, in a play of  $\hat{\G}^1(G, H)$,  if $\forall$ colours the 2-vertex cycle in $H$, then $\exists$ must respond by colouring a non-empty set of vertices in $G$ such that every coloured vertex is the target of an edge from another coloured vertex, else she will trigger (C3). But this is impossible, so $\forall$ has a winning strategy in $\hat{\G}^1(G,H)$.
\end{ex}

\begin{figure}[h]
\[\xymatrix{\bullet\ar[r] & \bullet\ar[r]  & \bullet & & \bullet\ar[r]  & \bullet\ar[r]  & \bullet \\
\bullet\ar[r]  & \bullet\ar[r]  & \bullet & & \bullet\ar[r]  & \bullet\ar[r]  & \bullet \\
& & & & & \bullet\ar@/^/[r]  & \bullet\ar@/^/[l]\\
& G & & & &  H
}\]
\caption{The graphs $G$ and $H$}
\label{F:GH} 
\end{figure}  

Similar games have been studied in the context of monadic second-order logic. Let $m,k< \omega$, and let $G,H$ be graphs. Then $\M^k_m(G,H)$ is a game played over $G$ and $H$ between players $\forall$ and $\exists$ using $k$ colours and $m$ pairs of pebbles. The rules of are similar to those of $\G^k$. Each round begins with $\forall$ choosing either a colour or a pebble pair. He then either colours a set of vertices of either $G$ or $H$ with his chosen colour, or places one of his chosen pebbles on a single vertex of either $G$ or $H$. In response, $\exists$ must either colour a subset of the other graph with the same colour, as in the Seurat game, or place the other pebble from the pair on a single vertex. Colours are interpreted as instantiations of monadic predicates. In this game, $\forall$ wins in round $n$ if, at the start of that round, the partial map induced by matching pairs of pebbles is not an isomorphism (taking into account the predicates induced by the colours, as well as the edge relation). It is known that $\M^k_m$ captures expressibility in monadic-second logic for relational structures with $k$ second-order and $m$ first-order variables (see e.g. \cite[Section 7]{Lib04}). The following proposition makes explicit a connection between the monadic second-order games and the Seurat games.

\begin{prop}\label{P:MSO}
Let $k, m< \omega$, and let $G,H$ be graphs. Then:
\begin{enumerate}
\item If $\forall$ has a winning strategy for the strong Seurat game $\hat{\G}^k(G,H)$, then he also has a winning strategy for $\M^k_2(G,H)$.
\item If $\forall$ has a winning strategy for $\M^k_m(G,H)$ and $k+m \geq 2$, then he also has a winning strategy for the ordinary Seurat game $\G^{k+m}(G,H)$.
\end{enumerate}
\end{prop}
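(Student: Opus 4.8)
The plan is to prove each direction by showing how a winning strategy for $\forall$ in one game can be simulated in the other. The two games differ in two respects: the Seurat game paints sets with colours, while the MSO game $\M^k_m$ additionally allows placing $m$ pairs of pebbles on single vertices, and the winning conditions are phrased differently (the MSO game asks whether the pebble-induced partial map is an isomorphism, whereas the Seurat game uses the palette-based conditions (C1)--(C4)).

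For part (1), I would have $\forall$ run his winning strategy for $\hat{\G}^k(G,H)$ inside $\M^k_2(G,H)$, using the $k$ colours identically and leaving the $2$ pebble pairs in reserve until needed. Since $\forall$ wins the strong Seurat game, play reaches a position where one of (C1)--(C4) holds. The task is to convert each winning condition into a falsified isomorphism witnessed by at most two pebbles. If (C1) holds, there is a palette $P$ with (say) $P^H$ nonempty but $P^G$ empty; $\forall$ places one pebble on a vertex $v\in P^H$, and $\exists$'s response in $G$ must fail to have the matching palette, so the pebble map is not colour-preserving. If (C2) holds, $\forall$ uses both pebbles to mark the endpoints of the offending edge (an edge from $P_1^H$ to $P_2^H$ with no matching edge in $G$), and the pebble map fails to preserve the edge relation. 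The key point for part (1) is that the extra conditions (C3) and (C4) are exactly what lets two pebbles suffice: if (C3) holds, $\forall$ pebbles the witnessing vertex $v\in P_2^H$ together with a vertex of $P_1^G$ (or uses the reasoning from the earlier lemma), forcing $\exists$ into a colour- or edge-violation detectable by the pebble map. The main obstacle here is bookkeeping: carefully checking that in each of the four cases two pebbles genuinely suffice to expose a non-isomorphism, and that $\forall$'s reserved pebble moves do not give $\exists$ extra freedom to escape.

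For part (2), the simulation runs in the other direction and must absorb pebbles into colours. I would let $\forall$ play his winning strategy for $\M^k_m(G,H)$ inside $\G^{k+m}(G,H)$, using the first $k$ colours to mirror the MSO colours directly, and dedicating one of the remaining $m$ colours to each pebble pair: placing a pebble on a single vertex $v$ is simulated by painting the singleton set $\{v\}$ with the corresponding reserved colour. Because that colour is used only for that one pebble and reassigning it erases its previous position, this faithfully tracks the pebble's location across rounds (a pebble being moved corresponds to recolouring the singleton). The condition $k+m\geq 2$ ensures at least two colours are available, so the Seurat game is nondegenerate. When $\forall$ wins in $\M^k_m$, the pebble-induced partial map is not an isomorphism: either two matched pebbles sit on vertices with differing colour-palettes, or the edge relation is violated between pebbled vertices. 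Either way, since each pebbled vertex now carries a unique singleton-palette in the Seurat game, this failure is visible as (C1) or (C2).

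The hard part will be handling the correspondence between the winning conditions precisely, especially in part (1), where (C3) and (C4) speak about \emph{every} vertex in a range versus the existence of a single bad vertex; translating such a universal-existential statement into a two-pebble witness is exactly where $\forall$ must spend a pebble to \emph{name} the witnessing vertex and then force $\exists$ to respond in a way that violates a simpler condition. I expect part (2) to be comparatively routine once the singleton-colour encoding of pebbles is set up, since the palette of a singleton-pebbled vertex directly encodes the information the pebble map needs. A minor subtlety to check in part (2) is that $\exists$ in the Seurat game is free to colour arbitrary sets rather than singletons, so I must argue that any such response either already loses by (C1)/(C2) or can be read off as a legitimate MSO response on a single vertex.
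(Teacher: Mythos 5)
Your proposal follows essentially the same route as the paper's proof: in part (1) you simulate the strong Seurat strategy with the $k$ colours and then convert each of (C1)--(C4) into a one- or two-pebble witness of non-isomorphism, and in part (2) you encode each pebble pair as a dedicated singleton colour and invoke the size-matching constraint (the paper's Proposition \ref{P:constraints}(S1), which is where $k+m\geq 2$ is used) to force $\exists$ to respond to pebble moves with singletons. The only caveat is that your (C2)/(C3) cases need the pebbles placed sequentially, reading off $\exists$'s response to the first before choosing the second, but that is exactly the bookkeeping the paper carries out.
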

\begin{proof}
Suppose first that $\forall$ has a winning strategy for $\hat{\G}^k(G,H)$. His strategy for $\M^k_2(G,H)$ is as follows. Initially, every round he plays with colours exactly as he would in $\hat{\G}^k(G,H)$. Since he is playing according to a winning strategy, at some point one of (C1)-(C4) will be triggered. If (C1) is triggered, then, without loss of generality, there's a palette $P$ such that $P^G$ is empty and $P^H$ is not. $\forall$ then plays a pebble move using one of the vertices in $P^H$. Since $P^G$ is empty, wherever $\exists$ places the partner pebble, the induced map cannot be an isomorphism, and so she loses. 

Alternatively, if (C2) is triggered then, without loss of generality, there are palettes $P_1$ and $P_2$ such that there's an edge from $P^G_1$ to $P^G_2$ but no edge from $P^H_1$ to $P^H_2$. Here $\forall$ chooses $u\in P^G_1$ and $v\in P^G_2$ where $(u, v)$ is an edge.   First he plays a pebble move using $u$, and $\exists$ must respond by placing the partner pebble on some $u'\in P^H_1$. Then he plays a pebble move using the remaining pebble pair and the vertex $v$, and $\exists$ must respond by placing the partner pebble on some $v'\in P^H_2$. But now she loses, as the map induced by the pebbles cannot be an isomorphism, as there's an edge $(u,v)$ but no edge $(u',v')$. The arguments for (C3) and (C4) are similar to the one for (C2), so we omit them for brevity. This proves (1).

For (2), note that it follows trivially from $k+m\geq 2$ and Proposition \ref{P:constraints} (S1) below that if $\forall$ colours a single vertex in  $\G^{k+m}(G,H)$, then $\exists$ must respond by colouring a single vertex too, otherwise $\forall$ can force a win.  So, if $\forall$ uses the first $k$ colours of $\G^{k+m}(G,H)$ (call these $c_1,\ldots,c_k$) to correspond to the $k$ colours of $\M^k_m(G,H)$, and the $m$ additional colours (call these $c_{k+1}, \ldots ,c_{k+m}$) to correspond to the pebble moves (i.e. by colouring single vertices), a sequence of moves in $\M^k_m(G,H)$ corresponds directly to a sequence of moves in  $\G^{k+m}(G,H)$. So $\forall$ can just use his strategy for $\M^k_m(G,H)$ without modification (unless $\exists$ breaks the correspondence by colouring more than one vertex in response to a `pebble' move, in which case $\forall$ forces a win as discussed above).   We need only check that a win for $\forall$ in $\M^k_m(G,H)$ translates into a win in $\G^{k+m}(G,H)$.   Suppose then that $\forall$ wins in $\M^k_m(G,H)$ due to the placement of pebble pairs $1,\ldots, m$   on $u_1, \ldots, u_m$ in $G$, and on $u'_1,\ldots, u'_m$ in $H$, so that $u_i\mapsto u'_i\; (1\leq i\leq m)$ is not an isomorphism with respect to colours $\set{c_1, \ldots, c_k}$ and the edge relation. 

There are two cases. Suppose first, for some  $1\leq i\leq m$, that $u_i$   is coloured by a different combination of colours from $\set{c_1, \ldots, c_k}$ than $u'_i$. Then since $u_i, u'_i$ are the unique points coloured with $c_{k+i}$, (C1) must be triggered, so $\forall$ wins $\G^{k+m}(G,H)$. Alternatively,  suppose that the $\set{c_1, \ldots, c_k}$-colour combinations of $u_i$ and $u'_i$ match for all $1\leq i\leq m$, but the induced map is not an isomorphism due to there being an edge $(u_i,u_j)$ in $G$, but no edge $(u'_i,u'_j)$ in $H$  or vice versa (for some $1\leq i, j\leq t$).   Then, $c_{k+i}$ only colours $u_i$ and $u'_i$, and $c_{k+j}$ only colours $u_j$ and $u'_j$,  so (C2) must be triggered, and $\forall$ also wins $\G^{k+m}(G,H)$ in this case too.      
\end{proof}

The 1-colour Seurat game is obviously fairly limited in its ability to distinguish non-isomorphic graphs, as we saw. However, even the 1-colour game can detect some differences, as we see in the next proposition.  Recall that a directed graph is  weakly connected if for every pair of vertices $u, v$ there is a path, of the symmetric closure of the edge relation, from $u$ to $v$, and it is 
strongly connected if it contains a directed path  in the edge relation from $u$ to $v$ and a directed path from $v$ to $u$, for every pair of vertices $u, v$.
\begin{prop}\label{P:1-colour}
Let $G$ and $H$ be digraphs. Then $\forall$ has a winning strategy in $\G^1(G,H)$ in all the following situations:
\begin{enumerate}
\item When $G$ has exactly one vertex and $H$ has more than one (or vice versa). \label{OneV}
\item When one of $G$ or $H$ is strongly connected but the other is not. \label{OneSC}
\item When one of $G$ or $H$ is weakly connected but the other is not. \label{OneWC}
\item When one of $G$ or $H$ has an irreflexive vertex but the other does not. \label{OneI}
\item When $G$ and $H$ are not isomorphic and each has at most two vertices. \label{One2}
\end{enumerate}
\end{prop}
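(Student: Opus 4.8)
The plan rests on the observation made in the text just before the proposition: in the one-colour game the single colour is erased and reassigned every round, so the game effectively restarts, and hence $\forall$ has a \ws\ in $\G^1(G,H)$ if and only if he can force a win in a single round. With one colour the only palettes are the empty palette and the full palette $\{c\}$, so after one round every vertex is either \emph{coloured} ($c$) or \emph{uncoloured} ($u$), and there are exactly four edge types $(c,c),(c,u),(u,c),(u,u)$. Thus if $\forall$ colours a set $S$ in one graph, $\exists$ survives the round precisely when she can colour a set $T$ in the other graph of the same emptiness/fullness type (to avoid (C1)) inducing exactly the same present/absent pattern of the four edge types (to avoid (C2)). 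In each of the five situations the plan is to exhibit one move $S$ for which no such response $T$ exists.

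For \eqref{OneV}, with $|G|=1$ and $|H|\geq 2$, I would have $\forall$ colour a single vertex of $H$: this is a nonempty proper subset, so to avoid (C1) $\exists$ would need a nonempty proper subset of $G$, which does not exist. For \eqref{OneI}, suppose $G$ has an irreflexive vertex $v$ while every vertex of $H$ carries a loop. Then $\forall$ colours $\{v\}$, for which the type $(c,c)$ is absent (no loop at $v$); but any nonempty $T\subseteq H$ contains a looped vertex, so $(c,c)$ is present in $H$ and (C2) fires, with the degenerate subcase $\{v\}=G$ forcing $T=H$ and the same discrepancy. Both of these are routine.

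The substantive cases are \eqref{OneSC} and \eqref{OneWC}, and both hinge on a crossing-edge lemma. If $G$ is strongly connected then for every nonempty proper $S\subsetneq G$ there is an edge from $S$ to $G\setminus S$ (any directed path from a vertex of $S$ to a vertex of the complement must cross), so the type $(c,u)$ is present; dually, if $G$ is weakly connected then its underlying undirected graph is connected, so for every nonempty proper $S$ at least one of $(c,u),(u,c)$ is present. Conversely, if $H$ is not strongly connected I would pick $u,v$ with no directed path from $u$ to $v$ and take $R$ to be the set of vertices reachable from $u$; then $R$ is nonempty, proper (it omits $v$), and closed under out-edges, so colouring $R$ makes $(c,u)$ absent, a pattern $G$ cannot reproduce. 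The weak case is identical with $R$ replaced by a connected component of the underlying undirected graph of $H$, across whose boundary neither $(c,u)$ nor $(u,c)$ occurs. In both cases I may assume $|G|,|H|\geq 2$, the single-vertex situations being instances of \eqref{OneV}. I expect this translation — from failure of (strong/weak) connectivity to a nonempty proper set with all outgoing (respectively, outgoing and incoming) crossing edges vanishing, together with the verification that genuine connectivity forbids such a set — to be the main obstacle; everything else is bookkeeping of the four edge types.

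Finally, for \eqref{One2} I would reduce by vertex count. If the two graphs have different sizes the smaller has one vertex and we are in \eqref{OneV}; if both have one vertex, non-isomorphism means exactly one has a loop, which is \eqref{OneI}. The remaining case is $|G|=|H|=2$: here $\forall$ colours a single vertex $v$ of $G$, leaving the other vertex $w$ uncoloured. To avoid (C1), $\exists$ must colour a single vertex $v'$ of $H$ (with $w'$ uncoloured), and then matching the four edge types $(c,c),(c,u),(u,c),(u,u)$ says exactly that the loop at $v$, the arc $v\to w$, the arc $w\to v$, and the loop at $w$ all agree with their primed counterparts, i.e. that $v\mapsto v',\,w\mapsto w'$ is an isomorphism $G\to H$. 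Since $G\not\cong H$ no such $v'$ exists, so $\exists$ cannot respond and $\forall$ wins.
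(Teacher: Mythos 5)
Your proposal is correct and follows essentially the same route as the paper: colour a single vertex for \eqref{OneV} and \eqref{OneI}, colour the out-reachability set (respectively a weak component) of a witness vertex in the disconnected graph for \eqref{OneSC} and \eqref{OneWC}, and reduce \eqref{One2} to the earlier cases plus a single-vertex move whose edge-type matching forces an isomorphism. The only cosmetic difference is that you make the ``one round suffices'' reduction and the four-edge-type bookkeeping explicit, which the paper instead leaves in the surrounding discussion.
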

\begin{proof}\mbox{}
\begin{enumerate}[(1)]
\item Here $\forall$ just colours one vertex of $H$. Then $H$ has both coloured and uncoloured vertices, which $\exists$ cannot replicate in $G$.
\item Suppose $G$ is strongly connected but $H$ is not. Let $E_H^\ast$ be the reflexive transitive closure of the edge relation of $H$, and for all $v\in H$ define $E_H^\ast(v) =\{u\in H: (v,u)\in E_H^\ast\}$. Then, as $H$ is not strongly connected, there is $v_0\in H$ such that $E_H^\ast(v_0)\neq H$. So, in his first move $\forall$ colours $E_H^\ast(v_0)$. Now, $\exists$ must respond by colouring a subset $S$ of $G$. If $S$ is not a proper subset of $G$ then $\exists$ loses as there will be an uncoloured vertex of $H$ but not of $G$. But if $S$ is a proper subset of $G$ she will also lose, as then there will be a coloured-to-uncoloured edge in $G$, but not in $H$.
\item Suppose $G$ is weakly connected but $H$ is not. The argument is similar to that used for (\ref{OneSC}), but this time we start with the relation $\hat{E}_H$ on the vertices of $H$ by $\hat{E}_H(u,v)$ if and only if either $E_H(u,v)$ or $E_H(v,u)$. 
\item Here $\forall$ just colours an irreflexive vertex in the  graph which has an irreflexive node.  If $\exists$ plays the empty set she loses by (C1), if she plays a non-empty subset of the  (reflexive) nodes of the other graph she loses by (C2).

\item  Suppose $G, H$ are not isomorphic and both have one or two vertices. Cases where the cardinalities are different are covered by part \eqref{OneV}, the case where both graphs have a single node (reflexive in one case irreflexive in the other) is covered by \eqref{OneI}, so suppose each graph has two nodes.  $\forall$ colours an arbitrary single node $\set{g}$ where $g\in G$, let the other node of $G$ be $g'$ say.  If $\exists$ does not colour a single node of $H$, she loses by (C1), so suppose she colours $\set{h}$ where $h\in H$ and let the other node of $H$ be $h'$.    Since the map $\set{(g, h), (g', h')}$ is not an isomorphism, $\exists$ must lose by (C2).

\end{enumerate}
\end{proof} 


In the $2$-colour game and beyond, $\forall$ has much more power to force a win, and $\exists$'s play must satisfy many constraints if she intends to stave off defeat. We will describe some of these in Proposition \ref{P:constraints} below, but first we introduce some terminology and notation.

\begin{defn}[$\tau$, $\sigma$, $\sigma^+$]\label{D:tau} Let $G=(V, E)$ be a digraph with vertices $V$ and edges $E\subseteq V\times V$,  let $v$ be a vertex of $G$ and  $X, Y\subseteq V$.
\begin{align}
\omit\rlap{$\ind(v), \outd(v)$ are the in- and out-degrees of $v$}\\
\nonumber\ind(v)&=|E\cap(V\times\set v)| \\
\nonumber\outd(v)&=|E\cap(\set v\times V)|\\
\tau(v)&=(\ind(v),\outd(v)) \mbox{ is the \textbf{tally} of $v$}\\
\ind_Y(v)&=|E\cap (Y\times \set v)|\\
\outd_Y(v)&=|E\cap(\set v\times Y)| \\
\tau_Y(v)&=(\ind_Y(v), \outd_Y(v)) \mbox{, the tally of $v$ relative to $Y$}\\
\sigma(X)&=\set{\tau(x):x\in X}\\
\sigma_Y(X)&=\set{\tau_Y(x):x\in X}\\
 \omit\rlap{$\sigma^+(X)$ is the multiset of pairs $\tau(x)$ with $x\in X$}\\
\omit\rlap{$\sigma^+_Y(X)$ is the multiset of pairs $\tau_Y(x)$ with $x\in X$.}
\end{align}


\end{defn}

The definitions for $\tau$, $\sigma$ and $\sigma^+$ can be adapted for undirected graphs. Given a graph $G$ and $v\in G$, in this case $\tau(v)$ is just the degree of $v$, and $\sigma^+(G)$ is  essentially the degree sequence of $G$.

\begin{defn}
Given a subset $S$ of a graph $G$ define:
\begin{itemize}
\item $\eta_O(S) =S\cup \{v\in G: \exists u\in S\text{ and } (u,v) \text{ is an edge}\}.$
\item $\eta_I(S) = S\cup \{v\in G: \exists u\in S \text{ and } (v,u) \text{ is an edge}\}.$
\end{itemize}
If $\bar{s} = (s_1,\ldots,s_n)$ is a sequence where $s_i\in \{I,O\}$ for each $i$, we use $\eta_{\bar{s}}(S)$ to denote $\eta_{s_n}\circ \eta_{s_{n-1}}\circ\ldots \circ \eta_{s_1}(S)$. If $0<k<\omega$ we may write e.g. $\eta^k_O(S)$ for $\eta_{\bar{s}}(S)$ when $\bar{s}=(s_1,\ldots,s_k)$ and $s_i = O$ for all $i\leq k$. We can extend the notation to cover $k=0$ by setting  $\eta^0_O(S)=S$.
\end{defn}

\begin{prop}\label{P:constraints}
In $\G^2(G,H)$ with colours $\{red,blue\}$, if $\exists$ is pursuing a winning strategy her moves must satisfy the following constraints:
\begin{enumerate}[(S1)]
\item If $\forall$ colours a set $S$, and $T$ is the set coloured by $\exists$ in response, we must have $|S|=|T|$.
\item If $\forall$ makes a move colouring a single vertex $v$, then $\exists$ must respond by colouring a single vertex $w$ so that $\tau(v)=\tau(w)$. 
\item If $\forall$ colours a set of vertices $S$, then $\exists$ must respond by colouring a set $T$ such that $\sigma(S)=\sigma(T)$. 
\item If $\forall$ colours a set of vertices $S$, then $\exists$ must respond by colouring a set $T$ such that $\sigma^+(S)=\sigma^+(T)$.

\item If $\forall$ colours a set $S_0$ red, and $T_0$ is the set coloured red by $\exists$ in response, then if in his next move $\forall$ colours $S_1=\eta_O(S_0)$ blue, $\exists$ must colour $T_1=\eta_O(T_0)$ blue in response. The analogous result holds for $\eta_I$.

\item If $\forall$ colours a set $S$, and $T$ is the set coloured by $\exists$ in response, then whenever $\bar{s} = (s_1,\ldots,s_n)$ is a sequence where $s_i\in \{I,O\}$ for each $i$, we must have $|\eta_{\bar{s}}(S)|=|\eta_{\bar{s}}(T)|$.
\end{enumerate}
\begin{proof}\mbox{}
\begin{enumerate}[(S1)]
\item Suppose $\forall$ colours a set $S_0=S$ of vertices of $G$ red, and $\exists$ responds by colouring a set $T_0=T$ of vertices of $H$ red. Suppose without loss of generality that $n=|S_0|<|T_0|$. Then $\forall$ chooses arbitrary $v_0\in T_0$, defines $T_1 = T_0\setminus \{v_0\}$, and colours $T_1$ blue. Then $\exists$ must choose $S_1\subset S_0$ and colour it blue. Note that the inclusion must be strict, to avoid (C1). 

Now, $\forall$ continues by choosing $v_1\in T_1$, defining $T_2=T_1\setminus \{v_1\}$, and colouring $T_2$ red. Again, $\exists$ must respond by choosing $S_2\subset S_1$ and colouring it red. Repeating this process would produce a chain $T_0\supset T_1\supset T_2\supset\ldots \supset T_n$, where $|T_n|= |T_0|-n>0$. If $\forall$ has not won before this point, there would be a corresponding chain $S_0\supset S_1\supset S_2\supset\ldots \supset S_n$, but this is impossible, as $|S_n| \leq |S_0|-n =0$, and so $S_n$ is empty. Since $S_n$ is empty but $T_n$ is not, $\exists$ must lose.

\item Suppose $\forall$ colours the vertex $v$ of $G$ red, and $\exists$ responds by colouring the vertex $w$ of $H$ red (we know she must respond by colouring a single vertex). Without loss of generality, suppose $\ind(v)< \ind(w)$. Since if one of $\{v,w\}$ is reflexive and the other is not $\exists$ loses immediately, we can assume that they are either both reflexive, or both irreflexive. Let $S$ be the set of vertices of $G$ that have no outgoing edge to $v$. 

Suppose first that $S$ is empty. Then every vertex of $G$ has an outgoing edge into $v$. Since $\ind(v)< \ind(w)$ by assumption, it follows that $H$ has more vertices than $G$ (since we are assuming $v$ is reflexive if and only if $w$ is). Thus $\forall$ could win using (S1), and so $\exists$ could not pursue a winning strategy anyway.

Suppose then that $S\neq\emptyset$, and that $\forall$ colours $S$ blue. Then $\exists$ must respond by colouring a set $T$ of vertices of $H$ blue, and, by (S1), we must have $|S|=|T|$. But, as $\ind(v)<\ind(w)$, there must be a vertex $u$ in $T$ such that $(u,w)$ is an edge in $H$. Thus $\exists$ loses the game, as there is an edge from a blue vertex to a red vertex in $H$, but no such edge in $G$.
\item Suppose $\forall$ colours the set $S$ of vertices of $G$ red, and $\exists$ responds by colouring the set of vertices $T$ of $H$ the same colour. By (S1) we can assume $|S|=|T|$. Suppose $\sigma(S)\neq \sigma(T)$. Suppose without loss of generality that there is $v\in S$ such that $\tau(v)\neq \tau(w)$ for all $w\in T$. Suppose $\forall$ colours $v$ blue. Then $\exists$ must respond by colouring a vertex of $T$ blue, as otherwise there will be no vertex with palette $\{red,blue\}$ in $H$. Suppose $\exists$ colours the vertex $u\in T$ blue. Then $\tau(v)\neq\tau(u)$, by choice of $v$, and so $\exists$ will lose by  (S2).
\item Suppose $\forall$ colours the set $S$ of vertices of $G$ red, and $\exists$ responds by colouring the set of vertices $T$ of $H$ the same colour. By earlier work we can assume that $|T|=|S|$, and also that $\sigma(T)=\sigma(S)$. Suppose that $\sigma^+(T)\neq \sigma^+(S)$. Suppose without loss of generality that there is $(m,n)\in\omega^2$ such that there are strictly more vertices in $S$ whose tally is equal to $(m,n)$ than there are vertices in $T$ with that property. Let $S'=\{x\in S: \tau(x) = (m,n)\}$. Suppose $\forall$ colours $S'$ blue. Then $\exists$ must respond by colouring some subset $T'$ of $T$ blue, and she must have $|T'|=|S'|$. But as there are not enough vertices with the right tally in $T$, we will have $\sigma(T')\neq \sigma(S')$, and so the result follows from (S3).

\item Claim 1: $\exists$ must ensure that $T_1\supseteq \eta_O(T_0)$. Proof: note first that by (C1) we must have $T_0\subseteq T_1$. So,  if $\eta_O(T_0)\not\subseteq T_1$ then there will be a red-blue to uncoloured edge in $H$, while no such edges exist in $G$, as in $G$ all vertices connected to $T_0$ by outgoing edges are coloured blue. This triggers (C2), proving Claim 1.

Claim 2: Whenever $\forall$ colours a set $S$ and $\exists$ responds by colouring a set $T$, she must ensure that $|\eta_O(S)|=|\eta_O(T)|$.  Proof: without loss of generality, suppose $|\eta_O(S)|<|\eta_O(T)|$. Then $\forall$ can colour $\eta_O(S)$ blue, and by Claim 1, $\exists$ must colour in response a superset of  $\eta_O(T)$. Since the size of the superset must necessarily be greater than $|\eta_O(S)|$, this violates (S1). This proves Claim 2. 

Now, returning to the main proof, we established in Claim 1 that $\exists$ must ensure that $T_1\supseteq \eta_O(T_0)$. Also, by (S1) and Claim 2 we have must have $|T_1|=|S_1|=|\eta_O(S_0)|=|\eta_O(T_0)|$. So $T_1$ must be a superset of $\eta_O(T_0)$ with the same size. In other words, $T_1=\eta_O(T_0)$ as claimed. A similar argument works for $\eta_I$.

\item Suppose $\forall$ colours $S\subseteq G$ red and $\exists$ colours $T\subseteq H$ red in response. Given $\bar{s}=(s_1,\ldots,s_n)$,  suppose $\forall$ plays by colouring the sets $S=S_0, S_1, \ldots, S_k$ in sequence (alternating between red and blue appropriately), where $S_i = \eta_{s_i}(S_{i-1})$ for all $i\in\{1,\ldots,n\}$. Then $\exists$ must respond by colouring a sequence $T=T_0, T_1, \ldots, T_k$.  By induction and (S5) we must have $T_i = \eta_{s_i}(T_{i-1})$ for all $i\in\{1,\ldots,n\}$, so if $|\eta_{\bar{s}}(S)|\neq |\eta_{\bar{s}}(T)|$ she loses by (S1). 
\end{enumerate}
\end{proof}
\end{prop}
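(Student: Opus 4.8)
The plan is to establish each of (S1)--(S6) in turn, relying throughout on a single meta-principle: since $\exists$ is assumed to be pursuing a \ws, any response of hers that violates one of the stated constraints must be convertible by $\forall$ into a guaranteed triggering of (C1) or (C2). Each item is therefore proved by contradiction---assume the constraint fails and exhibit a short winning continuation for $\forall$. The constraints are not independent, so the natural order is (S1) first, then (S2), (S3), (S4) in sequence, then (S5), and finally (S6), since each later item calls on earlier ones.

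For (S1) the key device is an alternating ``peeling'' argument. Supposing $|S|<|T|$, I would have $\forall$ repeatedly strip one vertex from the current copy of the larger set in $H$ and recolour the remainder, alternating red and blue, so that each recolouring forces $\exists$ to respond with a \emph{strict} subset of her current set in $G$ (strictness being forced by (C1) at each step). After $|S|$ such rounds the $G$-side set is exhausted while the $H$-side set is still nonempty, yielding a palette present in $H$ but not $G$ and hence (C1). This size equality is the foundation everything else rests on. With (S1) in hand, (S2) follows by isolating a degree discrepancy: if $\tau(v)\neq\tau(w)$, say $\ind(v)<\ind(w)$ (after noting that reflexivity must match on pain of immediate loss), $\forall$ colours blue the set of vertices of $G$ with no edge into $v$; by (S1) the $H$-side blue response has the same size, and a counting argument using $\ind(v)<\ind(w)$ shows this response cannot avoid all in-neighbours of the red vertex $w$, producing a blue-to-red edge in $H$ with no counterpart in $G$ and triggering (C2). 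The degenerate case where that set is empty is dispatched directly by (S1) via a cardinality mismatch.

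Then (S3) and (S4) are ``localisation'' arguments. To prove $\sigma(S)=\sigma(T)$, if some $v\in S$ has a tally matched by no vertex of $T$, $\forall$ colours $v$ blue, forces $\exists$ (by (C1)) to colour a single vertex of $T$ blue, and then invokes (S2) to derive a contradiction. To prove $\sigma^+(S)=\sigma^+(T)$, if some tally $(m,n)$ is over-represented in $S$, $\forall$ colours blue exactly the $S$-vertices with that tally and invokes (S3) on the resulting subsets. I expect the main obstacle to be (S5), since it requires controlling a closure operation rather than a single vertex or an unstructured set, and I would prove it via two claims. First, after $\forall$ colours $S_1=\eta_O(S_0)$ blue, in $G$ every out-neighbour of the doubly-coloured set $S_0$ is itself blue, so $G$ contains no edge from a $\{red,blue\}$-vertex to a non-blue vertex; forcing $\exists$ to avoid (C1) gives $T_0\subseteq T_1$, and avoiding the same forbidden edge type in $H$ then forces $\eta_O(T_0)\subseteq T_1$. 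Second, a size claim $|\eta_O(S)|=|\eta_O(T)|$ must be established in general, again by colouring $\eta_O(S)$ blue and combining the first claim with (S1).

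Combining the containment $T_1\supseteq\eta_O(T_0)$ with the size equality $|T_1|=|\eta_O(S_0)|=|\eta_O(T_0)|$ forces $T_1=\eta_O(T_0)$ exactly, and the $\eta_I$ case is symmetric. Finally, (S6) is a straightforward induction: $\forall$ plays out the whole sequence $S=S_0,\,S_1=\eta_{s_1}(S_0),\,\dots$, colouring alternately, and (S5) applied at each step forces $\exists$'s responses to track the corresponding $\eta$-images step by step, so that $|\eta_{\bar{s}}(S)|\neq|\eta_{\bar{s}}(T)|$ would contradict (S1). The crux throughout is recognising that freshly recolouring a carefully chosen set is the mechanism that converts a structural discrepancy into one of the two losing conditions, with the closure-tracking in (S5) being the one place where this mechanism is least routine.
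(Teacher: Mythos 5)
Your proposal is correct and follows essentially the same route as the paper's proof: the peeling argument for (S1), the non-in-neighbour colouring for (S2), localisation for (S3) and (S4), the two-claim containment-plus-cardinality argument for (S5), and induction via (S5) for (S6) all match the paper's argument step for step.
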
  

Note that it follows easily from Proposition \ref{P:constraints}(S1) that, if $|G|\neq |H|$, then $\forall$ has a winning strategy in $\G^2(G,H)$. (S6) can be improved to take tallies into account, but we postpone this argument till Corollary \ref{C:neighbours}, where we will prove something stronger. We also have the following easy lemma giving us a rough and ready upper bound on the number of colours $\forall$ needs to guarantee a win.

\begin{lem}\label{L:rough_bound}
Let $G$ and $H$ be digraphs with $|G|=|H| = n\geq 2$ but $G\not\cong H$. Then $\forall$ has a winning strategy in  $\G^{\lceil \log_2 n\rceil}(G,H)$.   
\end{lem}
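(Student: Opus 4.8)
The plan is to have $\forall$ \emph{individualize} the vertices of $G$, that is, arrange a colouring in which every vertex receives its own distinct palette. Set $k=\lceil \log_2 n\rceil$, so that $2^k\geq n$ and there are at least $n$ distinct palettes available. Fix an enumeration $v_1,\dots,v_n$ of the vertices of $G$ together with an injection $i\mapsto P_i$ from $\set{1,\dots,n}$ into the set of palettes (subsets of the $k$ colours). For each colour $c_j$, let $C_j\subseteq G$ be the set of those $v_i$ whose assigned palette $P_i$ contains $c_j$. In rounds $0,\dots,k-1$, $\forall$ colours $C_j$ with colour $c_j$. Since $\forall$ uses each of the $k$ colours exactly once, nothing is erased, and at the end of this phase each vertex $v_i$ of $G$ carries precisely the palette $P_i$. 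In particular every palette $P$ satisfies $|P^G|\leq 1$, and exactly $n$ palettes are nonempty in $G$.

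Next I would examine the position at the start of round $k$, assuming $\exists$ has not already lost at an earlier check. Let $D_1,\dots,D_k\subseteq H$ be $\exists$'s responses, determining the palettes in $H$. If some palette $P$ has exactly one of $P^G,P^H$ empty, then (C1) fires and $\forall$ wins. Otherwise, for every palette $P$ we have $P^G=\emptyset$ if and only if $P^H=\emptyset$, so the nonempty palettes of $H$ are exactly the $n$ nonempty (singleton) palettes of $G$. Since $|H|=n$ and each of these $n$ palettes is nonempty, a counting argument forces each $P^H$ to be a singleton as well. This yields a bijection $\phi:G\to H$ sending each vertex $v$ of $G$ to the unique vertex of $H$ sharing its palette.

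Finally, since $G\not\cong H$, the bijection $\phi$ is not an isomorphism, so there is (without loss of generality) an edge $(u,v)$ of $G$ with $(\phi(u),\phi(v))$ not an edge of $H$. Taking $P_1,P_2$ to be the palettes of $u,v$, we have $P_1^G=\set{u}$ and $P_2^G=\set{v}$ with an edge from $P_1^G$ to $P_2^G$, whereas $P_1^H=\set{\phi(u)}$ and $P_2^H=\set{\phi(v)}$ with no edge from $P_1^H$ to $P_2^H$; hence (C2) fires. The case of a loop $u=v$ is handled identically with $P_1=P_2$, and a missing edge in the reverse direction is symmetric. Thus $\forall$ wins by round $k$ no matter how $\exists$ plays.

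I do not expect a serious obstacle: the lemma is essentially the observation that $\lceil \log_2 n\rceil$ colours suffice to separate all vertices. The only steps needing a little care are the counting argument forcing each $P^H$ to be a singleton (which uses $|H|=n$ crucially, and where Proposition~\ref{P:constraints}(S1) could alternatively be invoked to pin down sizes), and the verification that (C2) covers loops and both edge directions; both are routine.
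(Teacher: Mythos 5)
Your proposal is correct and follows exactly the paper's approach: the paper's own (two-sentence) proof is precisely the observation that $\forall$ can individualize the vertices of $G$ with $\lceil \log_2 n\rceil$ colours, forcing $\exists$ into a palette-matching bijection that cannot be an isomorphism. Your write-up simply supplies the details (the counting argument via (C1)/(S1) and the (C2) endgame) that the paper leaves implicit.
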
 
\begin{proof}
 With  $\lceil \log_2 n\rceil$ colours $\forall$ can give each vertex in $G$ a unique palette, and to avoid losing $\exists$ will end up colouring $H$ so that each of its vertices has a unique palette. But now she loses anyway, because $G\not\cong H$.   
\end{proof}

In the 3-colour game we can get a version of constraint (S4) from Proposition \ref{P:constraints} for $\sigma^+_S(X)$, as we make precise in the following result.
\begin{prop}
In the 3-colour game played over a pair of digraphs $(G,H)$ with colours $\{red,blue,green\}$, if $\exists$ is pursuing a winning strategy then, at any stage in the game, if $S$ and $X$ are the subsets of $G$ coloured red and blue, respectively, and $T, Y$ are the subsets of $H$ coloured red and blue, respectively, then we must have  $\sigma^+_S(X) = \sigma^+_T(Y)$, and $\sigma^+_X(S) = \sigma^+_Y(T)$. 
\end{prop}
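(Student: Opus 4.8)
By the symmetry of the two colours red and blue it suffices to prove the first equality $\sigma^+_S(X) = \sigma^+_T(Y)$; the second follows by interchanging the roles of red and blue throughout. Throughout I treat the red sets $S$ (in $G$) and $T$ (in $H$) as a fixed \emph{reference}, never recolouring red, and I measure everything relative to it. Since $S$ and $T$ are the two sets chosen in the last round in which red was played, Proposition~\ref{P:constraints}(S1) gives $|S| = |T|$ (the case where red was never played is trivial, as then every relative tally is $(0,0)$ and the claim reduces to (S1) for blue). The plan is to relativise the ladder (S2)$\to$(S3)$\to$(S4) of Proposition~\ref{P:constraints}, using green as the probing colour.

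The key step is a relative analogue of (S2): at a position with red fixed as $S,T$, if $\forall$ colours a single vertex $v$ green, then $\exists$ must answer with a single green vertex $w$ with $\tau_S(v) = \tau_T(w)$. That the answer is a single vertex is (S1). To get $\ind_S(v) = \ind_T(w)$ I mimic the complement trick from the proof of (S2): $\forall$ colours the set $M_G = \{u \in S : (u,v)\notin E\}$ of red non-in-neighbours of $v$ with the third colour, and $\exists$ responds with $M_H$. While $v,w$ are still green, (C2) forces $M_H$ to lie among the red non-in-neighbours of $w$ (there is no edge from $M_G$ to $v$, hence none from $M_H$ to $w$), and (C1) forces $M_H\subseteq T$; then (S1) forces $|M_H| = |M_G|$. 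Since $|M_G| = |S| - \ind_S(v)$ while $M_H$ sits inside a set of size $|T|-\ind_T(w)$, and $|S|=|T|$, we obtain $\ind_S(v)\geq\ind_T(w)$, and the reverse inequality follows by interchanging $G$ and $H$. The same argument with out-neighbours gives $\outd_S(v)=\outd_T(w)$, so $\tau_S(v)=\tau_T(w)$. Reflexive vertices cause no trouble: (C2) already forces $v$ to be reflexive exactly when $w$ is, and $v$ is simply omitted from $M_G$.

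From here relative (S3), namely $\sigma_S(X) = \sigma_T(Y)$, follows as in the proof of (S3): if some $x\in X$ had $\tau_S(x)\neq\tau_T(y)$ for all $y\in Y$, then $\forall$ colours $x$ green, (C1) forces $\exists$ to answer with a green $w$ whose old palette contains blue (so $w\in Y$), and relative (S2) gives $\tau_T(w)=\tau_S(x)$, a contradiction. Finally relative (S4) mirrors the proof of (S4): if a relative tally $(m,n)$ occurs strictly more often in $X$ than in $Y$, then $\forall$ colours $X' = \{x\in X : \tau_S(x) = (m,n)\}$ green; (C1) forces the response $Y''$ to satisfy $Y''\subseteq Y$ and (S1) forces $|Y''| = |X'|$, so by counting $Y''$ contains a vertex of relative tally $\neq (m,n)$; but applying the relative (S2)/(S3) argument to the green pair $X', Y''$ shows every vertex of $Y''$ shares the relative tally $(m,n)$ of its partner in $X'$, a contradiction.

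The main obstacle is \textbf{colour management}: the game has only three colours, and in the relative setting two of them (red as reference, blue as the set being measured) are already committed, leaving essentially one free colour per probe, whereas the sub-arguments — the (S1) cardinality shrink, the complement colouring in relative (S2), and the nested probe inside relative (S4) — each want a scratch colour of their own. The resolution is to sequence the deductions so that every \emph{qualitative} fact read off from (C1) and (C2) (which palettes are (non)empty, which edges exist, and in particular the containments $M_H\subseteq\{u : (u,w)\notin E\}$, $w\in Y$, $Y''\subseteq Y$, and the matching of partners) is extracted while the relevant colour classes are still intact, and only afterwards is a colour repurposed to run the (S1) shrink, since the resulting \emph{cardinality} equalities are permanent constraints on any winning play of $\exists$ and cannot be undone by later recolouring. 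With this ordering, three colours suffice at each stage, and combining the qualitative containments with the cardinality equalities yields each contradiction.
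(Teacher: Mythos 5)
Your proof is correct and follows essentially the same route as the paper's: relativise the (S2)--(S4) ladder of Proposition~\ref{P:constraints} using the third colour, with the key probe being to colour the red non-in-neighbours of the singled-out vertex and then compare cardinalities via (S1), (C1) and (C2). The paper writes out only the relative (S2) step and explicitly leaves the rest to the reader, so your fuller treatment of the (S3)/(S4) relativisations and of the colour-reuse bookkeeping is a superset of what appears there.
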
   
\begin{proof}
The idea is to copy the format of the proof of Proposition \ref{P:constraints}, by proving analogues of (S2), (S3), building up to the analogue of (S4) that is the statement that $\exists$ must ensure that $\sigma^+_S(X) = \sigma^+_T(Y)$. By symmetry it follows she must also ensure that $\sigma^+_X(S) = \sigma^+_Y(T)$. The strategy in each case is also  essentially the same as in the proof of Proposition \ref{P:constraints}, just using the third colour to `relativize'. To illustrate the technique, we provide the proof that $\exists$ must ensure $\tau_S(u) = \tau_T(v)$. We leave the rest to the reader.

So, suppose that at some point in the game $S$ and $\{u\}$ are coloured, respectively, red and blue in $G$, and $T$ and $\{v\}$ are coloured red and blue in $H$. Suppose that $\tau_S(u)\neq \tau_T(v)$, and suppose without loss of generality that 
\[|\{s\in S: (s,u) \text{ is an edge in } G\}|<|\{s\in S: (x,v) \text{ is an edge in } H\}|.\] 
We can also assume that $u$ is reflexive if and only if $v$ is. Let $S'$ be the set of vertices in $S$ that do not have an outgoing edge to $u$. If $S'$ were empty, every vertex in $S$ would have an outgoing edge to $u$. By the assumed inequality, it would follow that $|S|<|T|$, and thus $\exists$ would not be following a winning strategy, by (S1).

Suppose then that $S'$ is not empty, and that $\forall$ colours $S'$ green. Then $\exists$ must respond by colouring a subset $T'$ of $T$ with $|T'|=|S'|$ green, and from elementary cardinality considerations it follows that there is a red/green to blue edge in $H$, but not in $G$.  
\end{proof}

\section{Tally-sequences}\label{S:tally}
In this section we introduce the concept of a tally-sequence. This provides a means to partition the vertices of a graph. The approach is similar to that of colour refinement, but the induced partition is coarser. The advantage of tally sequences is that $\exists$ must preserve them in her responses as part of any winning strategy in $\G^2$ (see Corollary \ref{C:2-colours}), but it is not known whether the same is true for colour refinement colours. We will exploit the fact that $\exists$ must preserve tally-sequences many times in our results on Stockmeyer graphs in Section \ref{S:games}. 
\begin{defn}\label{D:t-sequence}
Given a digraph $G$, a vertex $v$ of $G$, and a set $X$ of vertices of $G$, we define for each vertex $v$ of $G$ the sequence $(t_0^v, t_1^v, \ldots)$ of pairs of natural numbers recursively as follows:
\[ t_0^v=\tau_X(v).\]
If $0\leq k$ and $(t_0^v, t_1^v, \ldots, t_k^v)$ has been defined for every vertex $v$ of $G$ then
\[ t_{k+1}^v=\tau_{X_k^v}(v) \mbox{ where } X^v_k=\set{u\in X: (t^u_0, t^u_1, \ldots, t^u_k)=(t^v_0, t^v_1, \ldots, t^v_k)}.\]
In other words, $t^v_{k+1}$ is the tally of $v$ relative to the set of vertices $u$ in $X$ for which $(t^u_0,\ldots, t^u_k)$ is the same as $(t^v_0, \ldots, t^v_k)$.  For every $n<\omega$, we use $\vec{\tau}^n_X(v)$ to denote the sequence  $(t_0^v, \ldots, t_n^v)$, and $\vec{\tau}_X(v)$  denotes the sequence $(t_0^v, t_1^v, \ldots)$ described above.     We call $\vec{\tau}_X(v)$ the \textbf{tally-sequence of $v$ relative to $X$}, and if $X$ is the set of all vertices of $G$ we just write $\vec{\tau}(v)$ and speak of the \textbf{tally-sequence} of $v$.
\end{defn} 

As with Definition \ref{D:tau}, the concept of a tally-sequence can be trivially adapted for undirected graphs. For undirected graphs, the concept is very similar to that underlying \textbf{colour refinement}, also known as the Weisfeiler-Leman algorithm in one dimension (see e.g. \cite[Section 3.5.1]{Gro17}), but it is not the same. The $k$-dimensional WL algorithm is known to be closely associated with the logic $\mathsf{C}^{k+1}$, which is first-order logic restricted to $k+1$ variable symbols but extended by so-called counting quantifiers (see e.g. \cite[Section 3.4.2]{Gro17}). Indeed, it is known that two relational structures are indistinguishable by the $k$-WL algorithm if and only if they agree about all $\mathsf{C}^{k+1}$ sentences of the corresponding signature (\cite{ImmLan90}, or see e.g. \cite[Theorem 3.5.7]{Gro17}). It is easy to show that if two vertices of an undirected graph $G$ are assigned the same colours by colour refinement, then they also have the same tally sequences. The converse does not hold in general, so the colour refinement colours provide a more refined partition of vertices than the equivalence classes induced by tally sequences. This relative lack of refinement surprisingly turns out to be an advantage for finding winning strategies for $\forall$ in Seurat games, which is why we define tally sequences rather than simply adapting the well known colour refinement process to directed graphs. This point will be elaborated after Lemma \ref{L:2-colours}, but the basic idea is that in a winning strategy in the Seurat game with 2 colours, $\exists$ must in her moves match the tally sequences of vertices involved in the move made by $\forall$.  

As with colour refinement colour, the tally-sequence of a vertex eventually stabilizes. To see this, observe that in the construction of $\vec{\tau}_X(v)$, if $X^v_i = X^v_{i+1}$ for some $i$ then, $t_{k} = t_{i+1}$ for all $k> i$, and $X^v_k = X^v_i$ for all $k\geq i$. As $X^v_{i+1}\subseteq X^v_i$ for all $i<\omega$, the sequence must therefore stabilize eventually, as $G$ is finite. So the size of the graph $G$ induces an upper bound on the number of elements in the interesting parts of the tally-sequences of its vertices. We call the initial part of a tally-sequence $\vec{\tau}_X(v)$ before repetition begins the \textbf{significant part} of the tally-sequence. 
\begin{defn}
Define the \textbf{tally-spectrum} of a digraph $G$ to be the multiset of tally-sequences for its vertices. If $X$ is a set of vertices of $G$, define the \textbf{tally-spectrum of $X$ in $G$} to be the multiset of tally-sequences of vertices in $X$ relative to $G$.
\end{defn}

A naive algorithm for computing the (significant part of) the tally spectra of a graph $G$ can easily be shown to run in polynomial time in the number of vertices. We note that the $k$-dimensional  Weisfeiler-Leman algorithm can be implemented in $O(n^{k+1}\log n)$ time, where $n$ is the number of vertices \cite{ImmSen19}.  The following trivial proposition holds, since all graph properties defined by the edges and vertices  are preserved under isomorphism.

\begin{prop}\label{P:tally-isom}
Suppose $f:G\to H$ is an isomorphism. Then, for all vertices $v$ of $G$ we have $\vec{\tau}(v) = \vec{\tau}(f(v))$.
\end{prop}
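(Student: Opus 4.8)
The plan is to prove, by induction on $k$, the termwise equality $t^v_k = t^{f(v)}_k$ for every $k<\omega$ and every vertex $v$ of $G$, where $t^v_k$ is the $k$th entry of the tally-sequence $\vec{\tau}_G(v)$ computed inside $G$ (relative to all of $G$), and $t^{f(v)}_k$ is the $k$th entry of $\vec{\tau}_H(f(v))$ computed inside $H$ (relative to all of $H$). Since $\vec{\tau}(v)=\vec{\tau}_G(v)$ and $\vec{\tau}(f(v))=\vec{\tau}_H(f(v))$ by definition, termwise agreement of these two sequences is exactly the claim. The crucial structural point, which dictates the precise form of the induction, is that the $(k{+}1)$th term $t^v_{k+1}$ depends not only on $v$ but on the class $X^v_k$, which is defined by comparing the length-$k$ initial segments of \emph{all} vertices; so the induction hypothesis must be quantified over all vertices simultaneously.

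First I would record the elementary observation that an isomorphism preserves relative tallies: if $f:G\to H$ is an isomorphism and $Y\subseteq G$, then $\tau_Y(v)=\tau_{f(Y)}(f(v))$ for every $v$. This is immediate from Definition \ref{D:tau}, since $f$ restricts to a bijection between $\{u\in Y:(u,v)\in E_G\}$ and $\{u'\in f(Y):(u',f(v))\in E_H\}$, and likewise for the out-component, so $\ind_Y(v)=\ind_{f(Y)}(f(v))$ and $\outd_Y(v)=\outd_{f(Y)}(f(v))$. The base case $t^v_0=t^{f(v)}_0$ is just the instance $Y=G$ (hence $f(Y)=H$) of this observation.

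The inductive step is the heart of the argument. Assuming $t^u_i=t^{f(u)}_i$ for all $i\leq k$ and all vertices $u$ of $G$, I would show that $f(X^v_k)=X^{f(v)}_k$: a vertex $u$ lies in $X^v_k$ exactly when its length-$k$ segment in $G$ equals that of $v$, and by the induction hypothesis this segment coincides with the length-$k$ segment of $f(u)$ in $H$, and similarly for $v$; since $f$ is a bijection onto $H$, this is equivalent to $f(u)\in X^{f(v)}_k$. Applying the relative-tally observation with $Y=X^v_k$ then yields $t^v_{k+1}=\tau_{X^v_k}(v)=\tau_{X^{f(v)}_k}(f(v))=t^{f(v)}_{k+1}$, which closes the induction.

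As the authors indicate, the statement is routine and I do not anticipate any genuine obstacle; the only point requiring care is exactly the one flagged above, namely to phrase the induction hypothesis as a statement over \emph{all} vertices rather than a single fixed $v$, so that the identity $f(X^v_k)=X^{f(v)}_k$ is available when passing from level $k$ to level $k{+}1$. The undirected case follows verbatim, reading $\tau$ as degree throughout.
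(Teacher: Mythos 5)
Your proof is correct; the paper gives no explicit argument, simply noting the statement is trivial because tally-sequences are defined purely from the edge and vertex structure and hence are isomorphism-invariant. Your induction (with the hypothesis quantified over all vertices so that $f(X^v_k)=X^{f(v)}_k$ is available at each stage) is exactly the right way to make that remark precise, and it contains no gaps.
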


The next lemma says, essentially, that the 2-colour game can `see' tally-sequences. In particular, if $\forall$ colours a single element, $\exists$ must respond by colouring an element with the same tally-sequence. This will be extremely useful in Section \ref{S:games}.
\begin{lem}\label{L:2-colours}
Suppose $G$ and $H$ are digraphs, and let $n<\omega$. Suppose in $\G^2(G,H)$ that $\forall$ colours a set $X$ of vertices of $G$ red. Suppose there is a sequence $s = (s_0,s_1,\ldots, s_n)$ of pairs of natural numbers such that for all $x\in X$ we have $\vec{\tau}^n(x) = (s_0,\ldots,s_n)$. Then, as part of a winning strategy, $\exists$ must respond by colouring a set $Y$ with the same size as $X$, and where, for all $y\in Y$, we have $\vec{\tau}^n(y) = (s_0,\ldots,s_n)$. 
\end{lem}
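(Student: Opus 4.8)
Let me understand what we're proving. We have $\mathbf{G}^2(G,H)$ with colors red, blue.The plan is to prove the statement by induction on $n$, exploiting the fact that the recursive definition of the tally-sequence $\vec{\tau}^n$ refines at each step according to vertices sharing a common initial segment. The base case $n=0$ is essentially Proposition \ref{P:constraints}(S4): if $\forall$ colours $X$ red, then $\exists$ must respond with some $Y$ satisfying $|Y|=|X|$ and $\sigma^+(X)=\sigma^+(Y)$. Since every $x\in X$ has $\vec{\tau}^0(x) = t_0^x = \tau(x) = s_0$, the multiset $\sigma^+(X)$ is just $|X|$ copies of $s_0$; matching $\sigma^+$ forces every $y\in Y$ to have $\tau(y)=s_0$, i.e. $\vec{\tau}^0(y)=s_0$, which is what we want.

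For the inductive step, suppose the claim holds for all sequences of length up to $n$, and that $\forall$ colours a set $X$ red where every $x\in X$ has $\vec{\tau}^{n+1}(x) = (s_0,\ldots,s_{n+1})$. First I would invoke the induction hypothesis (applied to the length-$n$ prefix, which every $x\in X$ shares) to conclude that $\exists$'s response $Y$ satisfies $|Y|=|X|$ and $\vec{\tau}^n(y)=(s_0,\ldots,s_n)$ for all $y\in Y$. It remains only to upgrade this to $\vec{\tau}^{n+1}(y)=(s_0,\ldots,s_{n+1})$, i.e. to pin down the $(n+1)$th term. The key observation is that $t_{n+1}^v = \tau_{X_n^v}(v)$, where $X_n^v$ is the set of vertices sharing $v$'s length-$n$ prefix; so to control the $(n+1)$th term I need $\exists$ to be forced to match tallies taken relative to the equivalence class of vertices with a given prefix. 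The natural mechanism is to have $\forall$ colour, using the second colour, exactly the set of vertices carrying a particular prefix, and then argue via the relativized constraints (as in Proposition \ref{P:constraints}(S2)/(S4) and the relativized $\sigma^+$ proposition proved immediately before Section \ref{S:tally}) that $\exists$ must reproduce the relative tallies.

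The main obstacle, and the step I would spend the most care on, is the bookkeeping of colours: the 2-colour game gives $\forall$ only red and blue, yet the induction seems to want to repeatedly isolate prefix-classes while keeping $X$ and $Y$ marked. I would handle this by running $\forall$'s play as a \emph{sequence} of moves rather than trying to do everything at once. Concretely, having fixed $X$ red and $Y$ its forced response, $\forall$ reuses the blue colour in successive rounds to probe the structure: at each stage he colours blue a carefully chosen subset determined by the prefix data (for instance a set whose neighbourhood interactions with the red set $X$ detect the relative tally $\tau_{X_n^v}$), and by the constraints established earlier $\exists$ is forced to respond with the corresponding subset of $Y$, on pain of triggering (C1) or (C2). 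Because reusing a colour erases its previous use but leaves red ($X$ and $Y$) intact, $\forall$ can iterate this probing without exhausting his palette. The delicate point is to verify that the relativized tally constraints genuinely propagate down the refinement: that matching $\tau$ relative to a prefix-class in $H$ really does force the $(n+1)$th tally term, which requires checking that $X_n^v$ and its $H$-counterpart correspond correctly under $\exists$'s forced moves. Once this is established, every $y\in Y$ inherits the correct $(n+1)$th term, completing the induction.

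I expect the argument to lean heavily on the relativized analogue of (S4) (the unnumbered proposition preceding Section \ref{S:tally}), since that is precisely the tool that lets $\forall$ force $\exists$ to match tallies computed relative to a distinguished coloured set, and the tally-sequence is built by exactly such relativization at each refinement step.
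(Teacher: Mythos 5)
Your overall structure is right: induction on $n$, base case from the $\sigma$/$\sigma^+$ constraints of Proposition \ref{P:constraints}, and an inductive step whose crux is forcing $\exists$ to match tallies computed \emph{relative to the prefix class}. You have also correctly located the difficulty. But the inductive step as you describe it is not actually carried out, and the mechanism you propose for it does not work in the two-colour game: the relativized $\sigma^+$ proposition you want to lean on is a statement about the \emph{three}-colour game (it needs a third colour to mark the probing set while red and blue hold $S$ and the distinguished vertex), and your fallback of "a carefully chosen subset whose neighbourhood interactions with the red set detect the relative tally" is a promissory note rather than an argument. You yourself flag that verifying the propagation of the relativized constraints "requires checking" a correspondence you do not establish.

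The idea that closes this gap in the paper is simpler than the machinery you reach for. Let $G'$ and $H'$ be the subgraphs induced by the vertices whose length-$(n{+}1)$ tally prefix is $(s_0,\ldots,s_n)$. The inductive hypothesis says precisely that whenever $\forall$ colours a set inside $G'$, $\exists$ is forced to respond inside $H'$; hence the play can be regarded, without loss of generality, as a play of $\G^2(G',H')$, and a win for $\forall$ in that restricted game is a win in $\G^2(G,H)$. Under this restriction the $(n{+}1)$th tally term of a vertex is just its ordinary tally in $G'$ (resp.\ $H'$). So if some $u\in Y$ has $\tau_{H'}(u)=s'\neq s_{n+1}$, $\forall$ simply colours the single vertex $u$ blue; $\exists$ must colour some single $w\in X$ blue to avoid (C1), and now constraint (S2), applied \emph{inside the restricted game} $\G^2(G',H')$, forces $\tau_{G'}(w)=s'$, contradicting $\tau_{G'}(w)=s_{n+1}$. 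No third colour and no relativized $\sigma^+$ is needed; the relativization is absorbed entirely into the passage to the induced subgraphs, which is exactly why tally-sequences (unlike colour refinement) cooperate with the two-colour game.
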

\begin{proof}
That $Y$ must be the same size as $X$ is Proposition \ref{P:constraints}(S1). We will prove the rest by induction on $n$. The base case follows immediately from Proposition \ref{P:constraints}(S3). For the inductive step, suppose the result is true for $n$. Define $G'$ to be the subgraph generated by set of all vertices $v$ of $G$ such that $\vec{\tau}^n(v) = (s_0,\ldots,s_n)$, and define $H'$ analogously. By the inductive hypothesis, if $\forall$ restricts his play to $G'$ and $H'$, then so too must $\exists$, if she does not want to lose. Suppose $\forall$ colours a set $X$ of vertices of $G$ red, and suppose that for all $x\in X$ we have $\vec{\tau}^{n+1}(x) = (s_0,\ldots,s_{n+1})$. Then, by the above considerations, without loss of generality, we can consider this to be a move in the 2-colour game played over $(G',H')$. 

So, suppose $\exists$ colours the set $Y$ of vertices of $H'$ red in response, and suppose also there is $u\in Y$ with $\vec{\tau}^{n+1}(u) = (s_0,\ldots,s_n, s')$, and $s'\neq s_{n+1}$. Note that to say that $\vec{\tau}^{n+1}(u) = (s_0,\ldots,s_n, s')$ is to say that ${\tau}_{H'}(u) = s'$.  Then $\forall$ can continue by colouring $u$ blue. Now, $\exists$ must respond by colouring some vertex $w$ of $X$ blue. By choice of $X$ we have ${\tau}_{G'}(w) = s_{n+1}$, but, for this to be part of a winning strategy for $\exists$ in $\G^2(G',H')$, she must ensure ${\tau}_{G'}(w) = s'$. Since $s_{n+1}\neq s'$, by assumption, $\exists$ must lose $\G^2(G',H')$, and thus $\G^2(G,H)$ too.
\end{proof}
Note that it is crucial in the proof above that when computing the next step in the tally sequence of a vertex, only the vertices whose tally sequences are equal up to that point are involved. This allows the inductive step to go through as described. It follows that this argument does not work for colour refinement colours, and we do not know if the analog of Lemma \ref{L:2-colours} for colour refinement colours holds. Lemma \ref{L:2-colours} can be strengthened as follows.

\begin{cor}\label{C:2-colours}
In $\G^2(G,H)$, if $\forall$ colours a subset $X$ of $G$ red then $\exists$ must respond by colouring a subset $Y$ of $H$ red, and the tally-spectrum of $Y$ in $H$ must be the same as the tally-spectrum of $X$ in $G$. 
\end{cor}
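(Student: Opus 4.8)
The plan is to reduce everything to Lemma \ref{L:2-colours} together with the multiset constraint (S4) of Proposition \ref{P:constraints}. Since $G$ and $H$ are finite, the significant parts of all tally-sequences are bounded, so I would fix $N<\omega$ large enough that $\vec{\tau}(v)$ is determined by $\vec{\tau}^N(v)$ in both graphs. For a sequence $\alpha$ write $G_\alpha=\{v\in G:\vec{\tau}^N(v)=\alpha\}$ and define $H_\alpha$ analogously, and for the move under consideration put $X_\alpha=X\cap G_\alpha$ and $Y_\alpha=Y\cap H_\alpha$. Then the tally-spectra of $X$ and $Y$ coincide if and only if $|X_\alpha|=|Y_\alpha|$ for every $\alpha$. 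By Proposition \ref{P:constraints}(S1) we already have $|X|=|Y|$, and by (S4) the multiset of $0$-th terms $\vec{\tau}^0=\tau$ agrees; the task is to upgrade this agreement to the full tally-sequence.

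First I would argue by contradiction. Suppose the tally-spectra differ. Since $\sum_\alpha|X_\alpha|=|X|=|Y|=\sum_\alpha|Y_\alpha|$, there must be a class $\alpha^\ast$ that is over-represented in $X$, i.e.\ $|X_{\alpha^\ast}|>|Y_{\alpha^\ast}|$; in particular $X_{\alpha^\ast}\neq\emptyset$. Now $\forall$ colours $X_{\alpha^\ast}$ (a subset of the red set $X$) blue. In $G$ the blue-only range is empty and the red-blue range is $X_{\alpha^\ast}$, so to avoid (C1) player $\exists$ must answer with a blue set $Z$ whose blue-only range is empty, i.e.\ $Z\subseteq Y$, and whose red-blue range $Z$ is nonempty; by (S1) we get $|Z|=|X_{\alpha^\ast}|$. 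The point is that every red-blue vertex of $G$ lies in the single tally class $\alpha^\ast$.

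The decisive step is to force $Z$ to stay inside $H_{\alpha^\ast}$. If I can establish $Z\subseteq H_{\alpha^\ast}$, then $Z\subseteq Y_{\alpha^\ast}$, whence $|X_{\alpha^\ast}|=|Z|\le|Y_{\alpha^\ast}|<|X_{\alpha^\ast}|$, the desired contradiction. To obtain this, $\forall$ recolours the red set to $X_{\alpha^\ast}$. Since in $G$ both colours now mark exactly $X_{\alpha^\ast}$, condition (C1) forces $\exists$ to recolour her red set to exactly $Z$, after which both colours mark $X_{\alpha^\ast}$ in $G$ and $Z$ in $H$. In this position the second colour is merely a duplicate of the first, so the configuration is identical to the one reached when $\forall$ simply colours $X_{\alpha^\ast}$ red on an otherwise blank board and $\exists$ answers with $Z$. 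Lemma \ref{L:2-colours}, applied with $n=N$ so that a common value of $\vec{\tau}^N$ amounts to a common tally-sequence, then forces every vertex of $Z$ to have tally-sequence $\alpha^\ast$, i.e.\ $Z\subseteq H_{\alpha^\ast}$, completing the contradiction and hence the proof.

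The step I expect to be the main obstacle is precisely this last one: once $\forall$ has spent blue to pin $Z$, both colours are occupied, so it is not immediately clear that the two-colour game can still certify the tally-sequences of the marked vertices (this is exactly the kind of configuration the recursive definition of tally-sequences is designed to cope with). The resolution is the recolouring trick above, which frees a colour by collapsing the two marked sets onto one another and thereby reduces the situation to a single fresh red move, bringing Lemma \ref{L:2-colours} back into play. I would take care to verify the bookkeeping of conditions (C1) and (S1) at each recolouring, to confirm that $\exists$'s responses really are forced as claimed, but no genuinely new idea beyond Lemma \ref{L:2-colours} should be required.
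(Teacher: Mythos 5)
Your proof is correct and follows essentially the same route as the paper: isolate an over-represented tally class, colour it with the second colour, and combine (C1), (S1) and Lemma \ref{L:2-colours} to derive a contradiction. The only difference is your recolouring trick for freeing a colour, which the paper skips by applying Lemma \ref{L:2-colours} directly to the blue move (implicitly relying on the fact that $\forall$'s continuation in that lemma overwrites both colours regardless of the prior position); your extra step is a legitimate, more careful justification of the same point.
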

\begin{proof}
Suppose  $\exists$ colours $Y$ and the tally-spectrum of $Y$ in $H$ is not the same as that of $X$ in $G$. Then there is a sequence $s=(s_0,s_1,\ldots)$ of pairs of natural numbers such that the number of elements of $X$ whose tally-sequence is $s$ is not the same as the number of elements of $Y$ whose tally-sequence is $s$. Without loss of generality, suppose there are more elements of $X$ with this tally-sequence. Then $\forall$ can colour these elements blue, and, by Lemma \ref{L:2-colours}, $\exists$ must, if she doesn't want to lose, respond by colouring the same number of elements of $Y$ with tally-sequence $s$ blue, but this is impossible.  
\end{proof}

\begin{cor}\label{C:distinct}
If digraphs $G$ and $H$ do not have the same tally-spectra then $\forall$ has a winning strategy in $\G^2(G,H)$.
\end{cor}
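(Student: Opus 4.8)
The plan is to have $\forall$ colour the entire vertex set of $G$ red in his opening move, and then invoke Corollary \ref{C:2-colours} to show $\exists$ has no safe response. The guiding observation is that when $X$ is taken to be all of $G$, the tally-spectrum of $X$ in $G$ is by definition simply the tally-spectrum of $G$, since $\vec{\tau}_G(v)=\vec{\tau}(v)$ for every vertex $v$; the same holds for $H$. Thus colouring everything reduces the global hypothesis about tally-spectra to the local constraint that Corollary \ref{C:2-colours} imposes on $\exists$.

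First I would dispose of the case $|G|\neq|H|$ separately. In that case the two tally-spectra are multisets of different cardinalities, so they automatically differ; but more directly, this case is already settled by the remark following Proposition \ref{P:constraints}, namely that $|G|\neq|H|$ gives $\forall$ a winning strategy via (S1): colouring all of $G$ forces $\exists$ to colour a subset of $H$ of the same size, which is impossible.

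So suppose $|G|=|H|=n$ and $\forall$ colours all $n$ vertices of $G$ red. By Proposition \ref{P:constraints}(S1) the set $Y\subseteq H$ that $\exists$ colours in response must satisfy $|Y|=n$, forcing $Y=H$. Then Corollary \ref{C:2-colours} tells us that, to avoid losing, $\exists$ must ensure that the tally-spectrum of $Y=H$ in $H$ equals the tally-spectrum of $X=G$ in $G$; that is, the tally-spectrum of $H$ equals the tally-spectrum of $G$. Since by hypothesis these differ, $\exists$ has no safe move, and hence $\forall$ has a winning strategy. There is no serious obstacle here, as Corollary \ref{C:2-colours} does all the real work; the only points requiring a moment's care are the identification of the relative tally-spectrum of the full vertex set with the absolute tally-spectrum, and the bookkeeping observation that, when the graphs have equal size, colouring all of $G$ compels $Y$ to be all of $H$.
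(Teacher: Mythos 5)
Your proof is correct and follows the same route as the paper, which simply observes that the statement is an immediate consequence of Corollary \ref{C:2-colours}; your argument is a careful unpacking of that one-line appeal (instantiating $X=G$, handling $|G|\neq|H|$ via (S1), and identifying the tally-spectrum of the full vertex set with the tally-spectrum of the graph).
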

\begin{proof}
This follows immediately from Corollary \ref{C:2-colours}.
\end{proof}

We can also generalize Proposition \ref{P:constraints} (S6).

\begin{cor}\label{C:neighbours}
In $\G^2(G,H)$, if $\forall$ colours a set $S$, and $T$ is the set coloured by $\exists$, then whenever $\bar{s} = (s_1,\ldots,s_n)$ is a sequence where $s_i\in \{I,O\}$ for each $i$, the tally spectra of $\eta_{\bar{s}}(S)$ and $\eta_{\bar{s}}(T)$ must be the same, or else $\forall$ can force a win.
\end{cor}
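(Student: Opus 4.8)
The plan is to combine the iterated neighbourhood-forcing argument already used to establish Proposition \ref{P:constraints}(S6) with the tally-spectrum constraint of Corollary \ref{C:2-colours}. The rough idea is that $\forall$ first forces $\exists$ to literally colour $\eta_{\bar{s}}(T)$ in response to his colouring $\eta_{\bar{s}}(S)$, and then the single move colouring $\eta_{\bar{s}}(S)$ is itself subject to Corollary \ref{C:2-colours}, which pins down the tally-spectrum of $\exists$'s response. The two facts together force $\eta_{\bar{s}}(T)$ to have the same tally-spectrum as $\eta_{\bar{s}}(S)$.

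In more detail, I would first have $\forall$ play out the sequence of neighbourhood operations exactly as in the proof of (S6). Writing $S_0 = S$ and $S_i = \eta_{s_i}(S_{i-1})$ for $1 \le i \le n$, $\forall$ colours $S_0, S_1, \ldots, S_n = \eta_{\bar{s}}(S)$ in turn, alternating the two colours so that each move colours the out- or in-neighbourhood closure of the set coloured in the previous move. By Proposition \ref{P:constraints}(S5) and induction on $i$ --- which is precisely the argument in the proof of (S6) --- if $\exists$ responds with sets $T_0, T_1, \ldots, T_n$ then, as part of any winning strategy, she is forced to play $T_i = \eta_{s_i}(T_{i-1})$ at every step. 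In particular her response to the final move is $T_n = \eta_{\bar{s}}(T)$.

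It remains to extract the tally-spectrum condition from this final move. The move colouring $S_n = \eta_{\bar{s}}(S)$ is an ordinary set-colouring move (in whichever of the two colours the parity of $n$ dictates), so Corollary \ref{C:2-colours} --- which holds for either colour by the symmetry of the two colours --- applies to it directly: $\exists$'s response must have the same tally-spectrum in $H$ as $S_n$ has in $G$, or else $\forall$ can force a win. Since that response is exactly $T_n = \eta_{\bar{s}}(T)$ by the previous paragraph, we conclude that $\eta_{\bar{s}}(S)$ and $\eta_{\bar{s}}(T)$ have the same tally-spectrum, as required. The case $n = 0$, where $\bar{s}$ is empty and the claim reduces to Corollary \ref{C:2-colours} applied to the opening move, is the base case of this reasoning.

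The only real subtlety, and the point I would take most care over, is the bookkeeping involved in composing the two constraints on the single final response: Proposition \ref{P:constraints}(S5) fixes the \emph{identity} of $\exists$'s response to be $\eta_{\bar{s}}(T)$, while Corollary \ref{C:2-colours} fixes its \emph{tally-spectrum}, and one must check that both genuinely bear on the same move despite the repeated reuse of the two colours during the $\eta$-sequence. Since reusing a colour merely erases its previous assignment, and since both (S5) and the argument behind Corollary \ref{C:2-colours} are phrased as constraints holding at whatever stage of the game the relevant move is made (the palette reasoning there only refers to the two currently coloured sets, not to any earlier erased ones), this composition goes through without difficulty, and no combinatorial input beyond the cited results is needed.
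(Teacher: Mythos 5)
Your proposal is correct and follows essentially the same route as the paper: force $T_i = \eta_{s_i}(T_{i-1})$ by induction using Proposition \ref{P:constraints}(S5) as in the proof of (S6), and then apply Corollary \ref{C:2-colours} to the final move colouring $\eta_{\bar{s}}(S)$, whose forced response is $\eta_{\bar{s}}(T)$. The extra care you take over colour reuse and the symmetry of the two colours is sound but not something the paper dwells on.
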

\begin{proof}
Suppose that after $S$ and $T$ are coloured by $\forall$ and $\exists$ respectively, $\forall$ colours the sets $S_1,\ldots,S_n$, alternating between red and blue appropriately, where $S_1=\eta_{s_1}(S)$ and $S_i = \eta_{s_i}(S_{i-1})$ for all $i>1$. Then $\exists$ must respond by colouring sets $T_1,\ldots,T_n$. By induction and Proposition \ref{P:constraints} (S5), $\exists$ must play $T_1 = \eta_{s_1}(T)$, and $T_i = \eta_{s_i}(T_{i-1})$ for $i>1$, otherwise $\forall$ can win. So $S_n=\eta_{\bar{s}}(S)$, and $T_n$ must be $\eta_{\bar{s}}(T)$. Thus if these sets have different tally spectra, we know from Corollary \ref{C:2-colours} that  $\forall$ can force a win. 
\end{proof}

As discussed above, the proof of Lemma \ref{L:2-colours} does not work for colour refinement colours, and we do not know if $\exists$ has to match a set of nodes  all with some colour refinement, with a corresponding set of the same size with the same colour refinement, in order to survive the game.  
 If this were true, we could then prove:
 \begin{itemize}
\item[(*)] two graphs distinguishable by colour refinement can be distinguished by the Seurat game $\G^2$.
\end{itemize}  
Noting the fact that graphs $G$ and $H$ are distinguishable by colour refinement if and only if there is a sentence $\phi$ in the logic $\mathsf{C}^2$ (first-order logic restricted to 2 variable symbols but extended by counting quantifiers) with $G\models \phi$ and $H\not\models \phi$ (\cite{ImmLan90}, or see e.g. \cite[Section 5]{CFI92} or \cite[Theorem 3.5.5]{Gro17}), we would then have the following sequence of implications for graphs $G$ and $H$:
\begin{align*}
&\text{There exists a $\mathsf{C}^2$-sentence $\phi$ such that $G\models \phi$ and $H\not\models \phi$}\\
\iff& \text{colour refinement distinguishes $G$ and $H$}\\
\implies& \text{$\forall$ has a winning strategy in $\G^2(G,H)$  (by assumption (*))}\\
\implies& \text{$\forall$ has a winning strategy in $\M^2_2(G,H)$ (by Proposition \ref{P:MSO})}\\
\iff& \text{There is a 2nd-order sentence $\psi$ with up to 2 monadic and 2 first-order}\\ 
\phantom{\implies}&\text{ variables such that $G\models \psi$ and $H\not\models \psi$.}  
\end{align*}
A pebble game capturing $\mathsf{C}^2$ equivalence is described in \cite[Section 4.1]{CFI92} (see also \cite[Fact 3.4.15]{Gro17}), so assumption (*) is equivalent to saying that whenever $\forall$ has a winning strategy in this pebble game for graphs $G$ and $H$, he also has one in $\G^2(G,H)$. We do not see why this should be true, but we do not have a proof that it is not. We note that finding graphs $G$ and $H$ that are not $\mathsf{C}^2$-equivalent but such that $\forall$ does not have a winning strategy in $\G^2(G,H)$ seems difficult, as currently we do not know of \emph{any} non-isomorphic graphs where $\forall$ does not have a winning strategy in the 2-colour Seurat game. 

Returning to tally-spectra, we have the following obvious result.

\begin{cor}\label{C:iso-tally}
If $G$ and $H$ are isomorphic they must have the same tally-spectra.
\end{cor}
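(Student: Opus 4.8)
The plan is to derive this as an immediate consequence of Proposition \ref{P:tally-isom}, which already establishes that an isomorphism preserves the tally-sequence of each individual vertex. The only additional ingredient needed is a bookkeeping observation about multisets.

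Concretely, suppose $f:G\to H$ is an isomorphism. In particular, $f$ is a bijection between the vertex set of $G$ and the vertex set of $H$. By Proposition \ref{P:tally-isom}, for every vertex $v$ of $G$ we have $\vec{\tau}(v)=\vec{\tau}(f(v))$, where the left-hand side is the tally-sequence computed in $G$ and the right-hand side is computed in $H$. The tally-spectrum of $G$ is by definition the multiset $\{\vec{\tau}(v):v\in G\}$, and similarly for $H$. First I would note that because $f$ is a bijection, it maps the vertices of $G$ onto those of $H$ with no collisions and no omissions, so summing multiplicities along $f$ shows that the multiset $\{\vec{\tau}(v):v\in G\}$ and the multiset $\{\vec{\tau}(w):w\in H\}$ contain each tally-sequence the same number of times. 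Hence the two tally-spectra are equal.

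There is no real obstacle here: the entire content of the result sits in Proposition \ref{P:tally-isom}, and the remaining step is purely formal. The one point worth stating explicitly, rather than the substantive work, is that $f$ being a \emph{bijection} (and not merely a degree- or tally-preserving map) is exactly what guarantees the multiplicities agree, so that we get equality of multisets and not merely equality of the underlying sets of tally-sequences.
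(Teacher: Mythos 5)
Your proof is correct and takes essentially the same route as the paper, which notes that the corollary ``holds trivially since all graph properties defined by edges and vertices are preserved by isomorphism'' --- i.e.\ exactly Proposition \ref{P:tally-isom} plus the multiset bookkeeping you spell out. (The paper also remarks, as an alternative, that the statement follows from Corollary \ref{C:distinct} via the game, but the direct argument is the intended one.)
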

This Corollary  follows from Corollary \ref{C:distinct},  but holds trivially since all graph properties defined by edges and vertices are preserved by isomorphism.

The converse to Corollary \ref{C:iso-tally} does not hold, as demonstrated by the following example, which is also well known as an example of a situation where the colour refinement algorithm fails to distinguish non-isomorphic graphs. However, $\forall$ does have a strategy in the corresponding 2-colour Seurat game.
  
	\begin{ex}\label{E:tally1}
Given a pair of non-isomorphic regular graphs, if the vertices of both graphs have the same in- and out-degrees, then the pair cannot be distinguished by looking at their tally-spectra. However, we do not know whether $\forall$ must have a winning strategy in the 2-colour Seurat game over such a pair.   We provide an example where $\forall$ can win the 2-colour Seurat game over the pair.  Let $G$ and $H$ be the (undirected) graphs in Figures \ref{F:star1} and \ref{F:star2} respectively. Then the tally-sequence of the central vertex in both graphs is $(6,0,0,\ldots)$, and the tally-sequences of the other vertices are all $(3,2,2,\ldots)$. However, the graphs are not isomorphic, because $G$ contains a cycle of length 6 not passing through the central vertex, but $H$ does not. 
	
	Moreover, $\forall$ has a strategy in  $\G^2(G,H)$, because he can colour one of the 3-cycles of exterior vertices of $H$ red, and the other blue. $\exists$ must lose, as in $H$ there will be no edge connecting red and blue, but if she follows the necessary principles of winning play by matching set sizes (see Proposition \ref{P:constraints}(S1)), there will inevitably be such an edge in $G$. 
	\end{ex}
	
	\begin{figure}[ht]
  \centering
  \begin{minipage}[b]{0.49\textwidth}
\[\xymatrix{& \bullet\ar@{-}[rr] & & \bullet\ar@{-}[dr] &\\
\bullet\ar@{-}[rr]\ar@{-}[ur] & & \bullet\ar@{-}[rr]\ar@{-}[ur]\ar@{-}[ul]\ar@{-}[dr]\ar@{-}[dl] & & \bullet\\
& \bullet\ar@{-}[ul]\ar@{-}[rr] & & \bullet\ar@{-}[ur] &
}\]
\caption{$G$}
\label{F:star1} 
  \end{minipage}
  \hfill
  \begin{minipage}[b]{0.49\textwidth}
\[\xymatrix{& \bullet\ar@{-}[dd]\ar@{-}[drrr]   & & \bullet\ar@{-}[dd]\ar@{-}[dlll]   &\\
\bullet & & \bullet\ar@{-}[rr]\ar@{-}[ur]\ar@{-}[ul]\ar@{-}[dr]\ar@{-}[dl]\ar@{-}[ll]  & & \bullet\\
& \bullet\ar@{-}[urrr]  & & \bullet\ar@{-}[ulll]  &
}\]
\caption{$H$}
\label{F:star2} 
  \end{minipage}
\end{figure}

In Example \ref{E:tally1}, we notice that something that makes $G$ different from $H$ is that the subgraph of $G$ composed of vertices whose tally-sequence is $(3,2,2,\ldots)$ is connected, being isomorphic to the 6-cycle $C_6$, while the corresponding subgraph of $H$ is the disjoint union of two copies of $C_3$, and so is not. We might wonder if we could obtain a kind of converse to Corollary \ref{C:iso-tally} by ruling out this kind of counterexample by, for example, demanding subgraphs induced by tally-sequences also be isomorphic. Unfortunately, this doesn't work, as we demonstrate in Example \ref{E:tally2}. First we will make a definition to clarify the idea of a `subgraph induced by a tally-sequence'.

\begin{defn}
Let $G$ be a digraph, let $n<\omega$ and let $s = (s_0,s_1,\ldots)$ be a sequence of ordered pairs of natural numbers. Define $G^n_s$ to be the subgraph of $G$ induced by the set of vertices $v$ of $G$ such that $\vec{\tau}^n(v) = (s_0,\ldots,s_n)$.
\end{defn}  

\begin{ex}\label{E:tally2}  
We present two non-isomorphic digraphs $G$ and $H$ that have the same tally-spectra, and where, in addition, for each tally-sequence $s$ and for each $n<\omega$ the induced subgraphs $G^n_s$ and $H^n_s$ are isomorphic. This example occurs in \cite{Rama81}, in a slightly different context, just before Theorem 2. Let $G$ and $H$ be the tournaments described by the adjacency matrices in Figures \ref{F:rama1} and \ref{F:rama2} respectively. The significant parts of their tally-spectra are given in Figures \ref{F:tally-rama1} and \ref{F:tally-rama2}. 

Note that $\forall$ still has a strategy in the 2-colour game played over these graphs. This is because he can colour $v_5$ red, in which case $\exists$ must respond by colouring $w_3$ red, to match tally-sequences. Then $\forall$ can colour $v_4$ blue, and $\exists$ must respond by colouring $w_4$ blue, for the same reason. But now there is a blue to red edge in $G$, but no such edge in $H$, and so $\exists$ loses anyway.

The graphs in this example are a counterexample to the reconstruction conjecture for digraphs (see \cite{Rama81}). Thus we see that, like the Stockmeyer graphs to be discussed later, we have a pair of digraphs that cannot be distinguished by comparing decks, but \emph{can} be distinguished in the Seurat game with only two colours. 
\end{ex}

\begin{figure}[h]
  \centering
  \begin{minipage}[b]{0.49\textwidth}
 \begin{tabular}{l  | l l l l l l}
& $v_0$ & $v_1$ & $v_2$ & $v_3$ & $v_4$ & $v_5$ \\ \hline
$v_0$ & 0 & 1 & 1 & 1 & 1 & 0 \\
$v_1$ & 0 & 0 & 1 & 1 & 1 & 0 \\
$v_2$ & 0 & 0 & 0 & 1 & 1 & 1 \\
$v_3$ & 0 & 0 & 0 & 0 & 1 & 1 \\
$v_4$ & 0 & 0 & 0 & 0 & 0 & 1 \\
$v_5$ & 1 & 1 & 0 & 0 & 0 & 0 
\end{tabular}
\caption{The adjacency matrix of $G$}
\label{F:rama1} 
  \end{minipage}
  \hfill
  \begin{minipage}[b]{0.49\textwidth}
  \begin{tabular}{l  | l l l l l l}
& $w_0$ & $w_1$ & $w_2$ & $w_3$ & $w_4$ & $w_5$ \\ \hline
$w_0$ & 0 & 1 & 1 & 1 & 1 & 0 \\
$w_1$ & 0 & 0 & 1 & 1 & 1 & 0 \\
$w_2$ & 0 & 0 & 0 & 1 & 1 & 0 \\
$w_3$ & 0 & 0 & 0 & 0 & 1 & 1 \\
$w_4$ & 0 & 0 & 0 & 0 & 0 & 1 \\
$w_5$ & 1 & 1 & 1 & 0 & 0 & 0 
\end{tabular}
\caption{The adjacency matrix of $H$}
\label{F:rama2} 
  \end{minipage}
	\end{figure}
	
	\begin{figure}[h]
  \centering
  \begin{minipage}[b]{0.49\textwidth}
 \begin{tabular}{l  | l l l}
vertex &\multicolumn{3}{c} {sig. part of $\vec{\tau}(\mbox{vertex})$} \\ \hline
$v_0$ & ((1,4),  & (0,0))  &  \\
$v_1$ & ((2,3),  & (0,1),  & (0,0)) \\
$v_2$ & ((2,3),  & (1,0),  & (0,0))\\
$v_3$ & ((3,2),  & (0,1),  & (0,0))\\
$v_4$ & ((4,1),  & (0,0))  & \\
$v_5$ & ((3,2),  & (1,0),  & (0,0))
\end{tabular}
\caption{The tally-spectrum of $G$}
\label{F:tally-rama1} 
  \end{minipage}
  \hfill
  \begin{minipage}[b]{0.49\textwidth}
  \begin{tabular}{l  | l l l}
vertex  &\multicolumn{3}{c} {sig. part of $\vec{\tau}(\mbox{vertex})$}  \\ \hline
$w_0$ & ((1,4),  & (0,0))  & \\
$w_1$ & ((2,3),  & (1,0),  & (0,0)) \\
$w_2$ & ((3,2),  & (0,1),  & (0,0)) \\
$w_3$ & ((3,2),  & (1,0),  & (0,0)) \\
$w_4$ & ((4,1),  & (0,0))  & \\
$w_5$ & ((2,3),  & (0,1),  & (0,0)) 
\end{tabular}
\caption{The tally-spectrum of $H$}
\label{F:tally-rama2} 
  \end{minipage}
	\end{figure}

\section{Seurat games and the $k$-WL algorithm}\label{WL}	
In this section we describe the construction of pairs of non-isomorphic graphs which are not distinguished by the $k$-dimensional Weisfeiler-Leman algorithm, for $k< \omega$. This family was introduced, and the result about $k$-WL was proved, in \cite[Section 6]{CFI92}. The new result here is that all these pairs of graphs \emph{are} distinguishable in the Seurat game $\G^2$. A diagram of one of these constructions can be found in \cite[p85]{Gro17}, and we provide the formal details now.

Given a graph $G$, let $E(G)$ denote the set of edges of $G$. We will assume here that $G$ is undirected and  irreflexive, and that every node has degree at least one. If $v$ is a node of $G$, we will use $d(v)$ to denote the degree of $v$. Given a node $v\in G$, the gadget $X_v$ is the graph with nodes \[\set{i(v, S):  S\subseteq\set{(v, w): (v, w)\in E(G)},\;|S|\mbox{ is even}}\;\cup\;\set{a(v, w), b(v, w): (v, w)\in E(G)}.\] 
Note that here $i(v,S),a(v,w),b(v,w)$ are names nodes, and $i,a,b$ are not meant to be understood as functions. The left set contains $2^{d(v)-1}$ nodes, and the right set contains $2d(v)$ nodes. Nodes of the former type are called \textbf{internal}, those of the latter type are called \textbf{external}.  The set of edges of the gadget $X_v$ is 
\[\set{(i(v, S), a(v, w)): (v, w)\in S}\cup\set{((i(v, S), b(v, w)): (v, w)\in E(G)\setminus S}. \]
Thus each internal node is linked to exactly  half the externals of its gadget, and similarly each external is linked to exactly half of the internals in its gadget.  The graph $\Gamma(G)$ is obtained by taking the disjoint union of all gadgets $X_v$ where $v\in G$, and for each $v\neq w\in G$, adding edges \[\set{(a(v, w), a(w, v)), (b(v, w), b(w, v)):(v, w)\in E(G)}\] between gadgets.

$\widetilde{\Gamma(G)}$ is obtained from $\Gamma(G)$ by picking an arbitrary edge $(v, w)$ and replacing the two edges \[\set{(a(v, w), a(w, v)), (b(v, w), b(w, v))}\] by the twist \[\set{(a(v, w), b(w, v)), (b(v, w), a(w, v))},\] so node degrees are unchanged.   By \cite[Lemma 6.2]{CFI92},  provided  all nodes have degree at least two, up to isomorphism it does not matter which edge $(v, w)$ is chosen.

As in \cite[Definition 6.3]{CFI92}, we define a \textbf{separator} of $G$ to be a subset $S\subset G$ such that the subgraph of $G$ induced by deleting the nodes in $S$ has no connected component with more than $\frac{|G|}2$ vertices. 

\begin{thm}
Given $n<\omega$, let $K_n$ be the complete graph with $n$ vertices. Then:
\begin{enumerate}[(1)]
\item If $n\geq 3$ then  $\Gamma(K_n)$ is not isomorphic to $\widetilde{\Gamma(K_n)}$ 
\item If $n\geq 1$, then $\Gamma(K_{2n+1})$ cannot be distinguished from $\widetilde{\Gamma(K_{2n+1})}$ by the $n$-dimensional WL algorithm.
\item If $n\geq 4$, then $\forall$ has a winning strategy in $\G^2(\Gamma(K_{n}), \widetilde{\Gamma(K_{n})})$.  
\end{enumerate}
\end{thm}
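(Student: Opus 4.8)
The plan is to prove the three parts separately, with parts (1) and (2) essentially citing or adapting the results of \cite{CFI92}, and part (3) being the genuinely new contribution. For part (1), the non-isomorphism of $\Gamma(K_n)$ and $\tilg$ for $n\geq 3$ is established in \cite[Section 6]{CFI92}; since $K_n$ has all nodes of degree $n-1\geq 2$, the twist produces a genuinely different graph, and I would simply invoke their lemma. For part (2), the indistinguishability of $\Gamma(K_{2n+1})$ from $\widetilde{\Gamma(K_{2n+1})}$ by the $n$-dimensional WL algorithm is the main theorem of \cite[Section 6]{CFI92} (their separator-based argument shows that $K_{2n+1}$ has no separator of size $\leq n$, which forces the pebble game for $n$-WL to be won by $\exists$). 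Again I would cite this directly rather than reproduce it.

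\medskip

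The substance is part (3): exhibiting a winning strategy for $\forall$ in $\G^2$. The key tool is Corollary \ref{C:2-colours}, which guarantees that if $\forall$ colours a set $X$ red, then $\exists$ must respond with a set $Y$ having the \emph{same tally-spectrum} in $H$; combined with Lemma \ref{L:2-colours}, this lets $\forall$ pin down which vertices $\exists$ is forced to colour. The strategy I would pursue is to exploit the single `twisted' edge as the structural feature that distinguishes the two graphs. First I would analyse the tally-sequences of the vertices of $\Gamma(K_n)$ and $\tilg$: because the twist leaves all degrees unchanged and $K_n$ is vertex-transitive, the two graphs will have identical tally-spectra, so Corollary \ref{C:distinct} alone cannot distinguish them. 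The plan is instead to have $\forall$ colour, say, a carefully chosen external vertex or a small set of externals near the twisted edge $(v,w)$ red; by Lemma \ref{L:2-colours}, $\exists$ is forced to respond with vertices of the same tally-sequence. The idea is then to use the second colour, via the neighbourhood-closure operators $\eta_O,\eta_I$ and Corollary \ref{C:neighbours}, to `walk' along the connections between gadgets and eventually expose that the cyclic parity of inter-gadget edges differs between $\Gamma(K_n)$ and $\tilg$. Concretely, one colours a set, takes its $\eta$-closure, and tracks how the forced responses propagate around a cycle through several gadgets; the twist forces a mismatch in which external vertices get reached, triggering (C1) or (C2).

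\medskip

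**The hard part will be** making the propagation argument precise, because within a single gadget $X_v$ the internal/external incidence structure is symmetric (each internal meets exactly half the externals), so a local colouring does not immediately reveal the twist. The difficulty is that the twist is a \emph{global} parity phenomenon: it is detectable only after traversing a cycle of gadgets, and one must verify that the forced $\eta$-closures of $\exists$'s responses cannot be consistently completed around such a cycle in $\tilg$ while they can in $\Gamma(K_n)$ (or vice versa). I would expect to need the hypothesis $n\geq 4$ precisely to guarantee enough edges (a long enough cycle, or enough independent paths between gadgets) for this parity obstruction to bite; for small $n$ the gadget structure may be too rigid or too symmetric. The main technical step is therefore to identify an explicit finite sequence of colouring moves — a red set, then an alternating sequence of $\eta_O$/$\eta_I$ closures — whose forced responses, by Corollary \ref{C:neighbours}, must match tally-spectra at each stage, and to show that matching is impossible at the final stage exactly because of the single twisted edge. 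Once that sequence is pinned down, verifying that $\forall$ wins is a finite check using (C1) and (C2).
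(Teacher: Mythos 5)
Your treatment of parts (1) and (2) matches the paper exactly: both are cited from \cite[Section 6]{CFI92} (Lemma 6.2 for non-isomorphism, the separator argument of Theorem 6.4 for WL-indistinguishability, using that $K_{2n+1}$ has no separator with fewer than $n+1$ vertices). The problem is part (3), where your sketch diverges from the paper and leaves a genuine gap. The paper's strategy is short and global: $\forall$ colours the set $\set{i(v,\emptyset):v\in K_n}$ red (one internal per gadget), then all $a$-type externals blue, then all internals red; the single twist then shows up directly as a blue--uncoloured edge present in $\tilg$ but not in $\gg$, triggering (C2). Two ideas make this work, and both are absent from your proposal. First, a counting argument (using $\eta$ and (S6)) shows $\exists$ must colour exactly one internal per gadget in response to the opening move; this is where $n\geq 4$ enters, via $2^{n-2}\geq n$ and the inequality $2^{n-2}+n+(n-1)>2^{n-2}+2(n-1)$ --- not, as you guess, via cycle length. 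Second, and crucially, the paper invokes \cite[Lemma 6.2]{CFI92} (adding an even number of twists yields an isomorphic graph) to normalise $\exists$'s responses without loss of generality to the canonical sets $\set{i(v,\emptyset)}$ and $\set{a(v,w)}$. This normalisation is what converts the ``global parity'' of the twist into a \emph{local} edge mismatch, so no propagation argument is needed at all.

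Your proposed alternative --- colour a set near the twisted edge and track forced $\eta$-closures around a cycle of gadgets using Corollary \ref{C:neighbours} --- is the step that would fail as described. The operators $\eta_O,\eta_I$ are monotone and grow fast: the closure of even a single external vertex immediately absorbs half the internals of its gadget, the next closure absorbs all externals of that gadget, and within a few steps the sets cover entire gadgets. At that point the $a$/$b$ distinction, which is the only place the twist lives, is washed out, and Corollary \ref{C:neighbours} only constrains the \emph{tally spectra} of these closures, which are determined by degrees and are identical in $\gg$ and $\tilg$ (as you yourself observe when noting that Corollary \ref{C:distinct} cannot apply). Moreover, since the two graphs are locally indistinguishable near any single vertex, Lemma \ref{L:2-colours} cannot force $\exists$ to respond near the image of the twisted edge, so there is no way to ``aim'' the walk. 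To repair the argument you would need to replace the propagation idea with the paper's normalisation step, or else prove from scratch that no choice of one-node-per-pair by $\exists$ can absorb an odd number of twists --- which is precisely the content of the CFI lemma you would be reproving.
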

\begin{proof}
 (1) follows immediately from  \cite[Lemma 6.2]{CFI92}. (2) follows from the proof of \cite[Theorem 6.4]{CFI92}, as $K_{2n+1}$ contains no separator with fewer than $n+1$ vertices. For (3), we describe a winning strategy for $\forall$ as follows.

Note first that, according to the definition, each gadget in $\Gamma(K_{n})$ and $\widetilde{\Gamma(K_{n})}$ has $2^{n-2}$ internal, and $2(n-1)$ external nodes. Each internal node thus has degree $n-1$, and each external node has degree $2^{n-3}+1$. Now, in the initial round, $\forall$ colours 
\[R = \set{i(v, \emptyset):v\in K_n} \;\subset\Gamma(K_{n})\] 
red. Then $\exists$ must respond by colouring red exactly one internal node from each gadget of $\widetilde{\Gamma(K_{n})}$, as we now demonstrate. 

First, denote the set coloured by $\exists$ in her response by $\tilde{R}$. As $n\geq 4$, by consideration of degrees and Proposition \ref{P:constraints} (S4), $|\tilde{R}|=n$ (as $|R|=n$), and $\tilde{R}$ can contain only internal nodes. Suppose $\tilde{R}$ contains two nodes from the same gadget. Then there must be another gadget of $\widetilde{\Gamma(K_{n})}$ where no internal nodes are coloured. Suppose $\forall$ colours the set of internal nodes of this gadget blue (call this set $\tilde{B}'$). Then $|\tilde{B}'|=2^{n-2}$, and so $\exists$ must respond by colouring a set $B'$ of $2^{n-2}$ previously uncoloured internal nodes of $\Gamma(K_{n})$. As exactly one node of each gadget of $\Gamma(K_{n})$ is coloured red, $\exists$ can't let $B'$ be the internal nodes of a single gadget. Now, $\eta(\tilde{B}')$ is the union of $\tilde{B}'$ with the external nodes of its gadget. So $|\eta(\tilde{B}')|= 2^{n-2}+ 2(n-1)$. Moreover, $2^{n-2}\geq n$, as $n\geq 4$, so $|B'| \geq n$, and so $\exists$ must colour at least as many internal nodes as there are gadgets. Any distinct pair of internal nodes from the same gadget must have at least $n$ neighbours, because it is impossible for two internal nodes to share all of the same external neighbours, and each internal node is neighbour to exactly $n-1$ external nodes ---  half the external nodes in its gadget. Any pair of internal nodes from different gadgets must have $2(n-1)$ distinct neighbours. So $\exists$ cannot hope to make $|\eta(B')|$ less than $2^{n-2}+n+ (n-1) > 2^{n-2} + 2(n-1)$. I.e. $|\eta(B')|>|\eta(\tilde{B}')|$, conflicting with Proposition \ref{P:constraints} (S6). Thus, if $\exists$ does not colour exactly one internal node from each gadget in response to $\forall$'s opening move, $\forall$ can force a win.

Returning to the main game, $\forall$ proceeds by colouring 
\[B = \set{a(v, w):v\neq w\in K_{n}}\subset\Gamma(K_{n})\] 
blue. In other words, $\forall$ colours every $a$ type external node of every gadget in $\Gamma(K_{n})$.  Then $\exists$ must respond by colouring $\tilde{B}\subset \widetilde{\Gamma(K_{n})}$, and $\tilde{B}$ must consist of exactly one node from each pair $({a}(v, w),{b}(v, w))$ for each $v\neq w\in K_{n}$. To see why, note first that Proposition \ref{P:constraints} (S4) says she must colour exactly $n(n-1)$ external nodes.  If she  coloured both nodes in a pair $({a}(v, w),{b}(v, w))$,  then,  as every internal node in a gadget is the neighbour of either the $a$ or the $b$ node in each external pair of that gadget, this would result in there being a blue---red edge in $\widetilde{\Gamma(K_{n})}$. Since no such edge exists in $\Gamma(K_{n})$, this would trigger trigger (C1).   So we may assume she colours blue exactly one out of each pair  $({a}(v, w),{b}(v, w))$ in $\tilg$.

We can assume without loss of generality that $\tilde{R} = \set{i(v, \emptyset):v\in K_n}$, and that $\tilde{B} = \set{a(v, w):v\neq w\in K_{n}}$. This amounts to assuming the internal node coloured by $\exists$ is $i(v, \emptyset)$ in each gadget, and switching the labels within each $(a,b)$ external node pair appropriately. This assumption is safe because the graph obtained by adding an even number of twists to $\widetilde{\Gamma(K_{n})}$ is isomorphic to $\widetilde{\Gamma(K_{n})}$, via an isomorphism induced by the relabeling described here (see \cite[Lemma 6.2]{CFI92}).

In the third round, $\forall$ reuses red to colour all the internal nodes of $\Gamma(K_{n})$, and in response $\exists$ must colour red all the internal nodes on $\widetilde{\Gamma(K_{n})}$ (appealing to Proposition \ref{P:constraints} (S4)). Now, in $\Gamma(K_{n})$, the (external) nodes in $B$ have as neighbours only the red coloured internal nodes of their own gadget, and a blue coloured external node from a neighboring gadget. In particular, there are no blue---uncoloured edges in $\Gamma(K_{n})$. However, due to the twist, there is an edge like this in $\widetilde{\Gamma(K_{n})}$, so $\exists$ loses anyway.     
\end{proof}

The result above produces, for each $k\geq 4$, a pair of graphs that cannot be distinguished by the $k$-WL algorithm, but can be distinguished in $\G^2$.
We do not currently know whether there are non-isomorphic graphs that can be distinguished by $k$-WL, but not by $\G^2$, or even if there are \emph{any} non-isomorphic graphs that $\G^2$ cannot distinguish.

\section{Stockmeyer graphs}\label{S:stock}
In this section we describe some graph constructions of P.K. Stockmeyer. The constructions come in pairs, and were originally used to demonstrate that there are non-isomorphic directed graphs that cannot be distinguished comparing decks. The reason we describe them here is that in the next section we will prove that they all \emph{can} be distinguished by $\G^2$.  We use the approach taken in \cite{Stock81}, with some minor notational differences. Given $0<k<\omega$, we define the tournament $T_k$ with vertices $\{v_1,\ldots,v_{2^k}\}$. The edge relation of $T_k$ is defined by there being an edge from $v_i$ to $v_j$ if and only if $\odd(j-i) \equiv 1 \mod 4$, where $\odd(z)$ is the result of dividing an integer $z$ by the largest possible power of 2.  Note that  $T_k$ is irreflexive and for distinct vertices $i, j$ exactly one of $(i, j)$ and $(j, i)$ is an edge,  hence it defines  a tournament.  

Following Stockmeyer, we will describe six families of pairs of graphs. The graphs involved will be disjoint unions of $T_m$ and $T_n$, for $0\leq n < m$, with additional edges between $T_m$ and $T_n$ defined according to a system to be described shortly. We will need the following definition.

\begin{defn}[$M_{m,n}$]
For natural numbers $0\leq n < m$, let $p = 2^m+2^n$, and define the $p\times p$ matrix $M_{m,n}$ as follows ($w,x,y,z$ will appear as variables). 
\begin{itemize}
\item If $1\leq i,j \leq 2^m$, or if $2^m+1\leq i,j \leq p$, set $M_{m,n}[i,j] = 1$ if $\odd(j-i) \equiv 1 \mod 4$, and $M_{m,n}[i,j] = 0$ otherwise.
\item If $1\leq i \leq 2^m$ and $2^m+1\leq j \leq p$, set $M_{m,n}[i,j] = w$ if $i+j$ is even, and $M_{m,n}[i,j] = x$ otherwise.
\item If $1\leq j \leq 2^m$ and $2^m+1\leq i \leq p$, set $M_{m,n}[i,j] = y$ if $i+j$ is even, and $M_{m,n}[i,j] = z$ otherwise.
\end{itemize}
\end{defn} 
Thus $M_{m,n}$ contains a copy of the adjacency matrix of $T_m$ in its upper left part, a copy of the adjacency matrix of $T_n$ in its lower right part, a pattern of alternating $x$s and $w$s in its top right part, and a pattern of alternating $y$s and $z$s in its lower left part. This is best illustrated by the example of $M_{3,2}$ described in Figure \ref{F:M} below. Note that this is \cite[Figure 1]{Stock81}, where it is called $M_{12}$\ due to minor notational differences.  

\begin{figure}[hbbp]
  \centering
  \begin{tabular}{l l l l l l l l l l l l}
0 & 1 & 1 & 0 & 1 & 1 & 0 & 0 & $w$ & $x$ & $w$ & $x$ \\
0 & 0 & 1 & 1 & 0 & 1 & 1 & 0 & $x$ & $w$ & $x$ & $w$ \\
0 & 0 & 0 & 1 & 1 & 0 & 1 & 1 & $w$ & $x$ & $w$ & $x$ \\
1 & 0 & 0 & 0 & 1 & 1 & 0 & 1 & $x$ & $w$ & $x$ & $w$ \\
0 & 1 & 0 & 0 & 0 & 1 & 1 & 0 & $w$ & $x$ & $w$ & $x$ \\
0 & 0 & 1 & 0 & 0 & 0 & 1 & 1 & $x$ & $w$ & $x$ & $w$ \\
1 & 0 & 0 & 1 & 0 & 0 & 0 & 1 & $w$ & $x$ & $w$ & $x$ \\
1 & 1 & 0 & 0 & 1 & 0 & 0 & 0 & $x$ & $w$ & $x$ & $w$ \\
$y$ & $z$ & $y$ & $z$ & $y$ & $z$ & $y$ & $z$ & 0 & 1 & 1 & 0\\ 
$z$ & $y$ & $z$ & $y$ & $z$ & $y$ & $z$ & $y$ & 0 & 0 & 1 & 1\\
$y$ & $z$ & $y$ & $z$ & $y$ & $z$ & $y$ & $z$ & 0 & 0 & 0 & 1\\
$z$ & $y$ & $z$ & $y$ & $z$ & $y$ & $z$ & $y$ & 1 & 0 & 0 & 0
\end{tabular}
\caption{$M_{3,2}$}
\label{F:M} 
	\end{figure}
	
Given $0\leq n<m$, we obtain a digraph with vertices $\{v_1,\ldots,v_{2^m+2^n}\}$ when we set each of $w,x,y,z$ to either 0 or 1, and use $M_{m,n}$ as the adjacency matrix in the obvious way. Following \cite{Stock81}, we define six families of pairs of digraphs by specifying the values of $w,x,y,z$ as described in Figure \ref{F:families}. Note that the family of pairs from \cite{Stock77} arise here as $(D_{m,0},D^*_{m,0})$.

\begin{figure}[hbbp]
  \centering
  \begin{tabular}{| c | c | c | c | c |}
	\hline
Digraph & $w$ & $x$ & $y$ & $z$ \\ \hline 
$A_{m,n}$ & 1 & 0 & 0 & 0 \\ \hline
$A^*_{m,n}$ & 0 & 1 & 0 & 0 \\ \hline
$B_{m,n}$ & 0 & 0 & 1 & 0  \\ \hline 
$B^*_{m,n}$ & 0 & 0 & 0 & 1 \\\hline
$C_{m,n}$ & 1 & 0 & 1 & 0 \\\hline
$C^*_{m,n}$ & 0 & 1 & 0 & 1 \\\hline
$D_{m,n}$ & 1 & 0 & 0 & 1 \\\hline
$D^*_{m,n}$ & 0 & 1 & 1 & 0 \\\hline
$E_{m,n}$ & 1 & 1 & 1 & 0 \\\hline
$E^*_{m,n}$ & 1 & 1 & 0 & 1 \\\hline
$F_{m,n}$ & 1 & 0 & 1 & 1 \\\hline
$F^*_{m,n}$ & 0 & 1 & 1 & 1\\\hline 
\end{tabular}
\caption{Six families}
\label{F:families} 
	\end{figure}

\section{The tally-spectra of Stockmeyer graphs}\label{S:games}
It was shown in \cite{Stock81} that if $Z\in\{A,B,C,D,E,F\}$, and  $0\leq n< m$ then the pair $(Z_{m,n},Z^*_{m,n})$ constitutes a counterexample to the reconstruction conjecture for digraphs, in other words, they are not isomorphic, but they cannot be distinguished by comparing decks. We omit the proofs, as they can be found in the original article. We will show in this section that $(Z_{m,n},Z^*_{m,n})$ can always be distinguished in the 2-colour Seurat game $\G^2(Z_{m,n},Z^*_{m,n})$. In other words, $\forall$ always has a winning strategy. Though the exact reasons for this vary to some extent between the families, the arguments here come down to examining tally-spectra, and we will need some analysis of the tally-sequences of vertices in these digraphs. With that in mind, we proceed to a technical lemma and some corollaries. 

\begin{lem}\label{L:tally}
In the graph $T_n$, the tallies of the first $2^{n-1}$ vertices must all be $(2^{n-1}-1, 2^{n-1})$, and the tallies of the second $2^{n-1}$ vertices must all be $(2^{n-1}, 2^{n-1}-1)$.
\end{lem}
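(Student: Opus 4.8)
The plan is to compute the out-degree $\outd(v_i)$ for every vertex and then read off the in-degree from the tournament identity $\ind(v_i) = (2^n-1) - \outd(v_i)$, which holds because $T_n$ is a loopless tournament. Writing $o(i) = \outd(v_i)$, the target is $o(i) = 2^{n-1}$ for $1\le i\le 2^{n-1}$ and $o(i) = 2^{n-1}-1$ for $2^{n-1} < i \le 2^n$; the two tallies $(2^{n-1}-1,2^{n-1})$ and $(2^{n-1},2^{n-1}-1)$ then follow immediately.

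First I would relate consecutive out-degrees via a shift bijection. The map $j\mapsto j+1$ sends an out-neighbour $v_j$ of $v_i$ (so $\odd(j-i)\equiv 1 \pmod 4$) to an out-neighbour $v_{j+1}$ of $v_{i+1}$, since $\odd((j+1)-(i+1)) = \odd(j-i)$. This is a bijection between the out-neighbours of $v_i$ lying in $\{1,\ldots,2^n-1\}$ and those of $v_{i+1}$ lying in $\{2,\ldots,2^n\}$, so for $1\le i\le 2^n-1$,
\[ o(i+1) - o(i) = \mathbf{1}[v_{i+1}\to v_1] - \mathbf{1}[v_i\to v_{2^n}], \]
where $\mathbf{1}[\cdot]$ is $1$ if the indicated edge is present and $0$ otherwise. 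Here $v_{i+1}\to v_1$ means $\odd(-i)\equiv 1$, i.e. $\odd(i)\equiv 3 \pmod 4$, while $v_i\to v_{2^n}$ means $\odd(2^n-i)\equiv 1 \pmod 4$.

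The crux is an arithmetic lemma about odd parts modulo $4$, which I expect to be the main (though elementary) obstacle. For $1\le i\le 2^n-1$ with $i\neq 2^{n-1}$, write $i = 2^a m$ with $m$ odd; then $a\le n-2$, since the only index in range with $a=n-1$ is $2^{n-1}$ itself. Hence $2^n - i = 2^a(2^{n-a}-m)$ with $2^{n-a}-m$ odd and $2^{n-a}\equiv 0\pmod 4$, so $\odd(2^n-i) = 2^{n-a}-m \equiv -m \equiv -\odd(i)\pmod 4$. Thus $\odd(i)$ and $\odd(2^n-i)$ occupy opposite residues in $\{1,3\}$ modulo $4$, making the two bracketed terms above equal; consequently $o(i+1)=o(i)$ for every $i\neq 2^{n-1}$. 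At the single exception $i=2^{n-1}$ we have $\odd(i)=\odd(2^n-i)=\odd(2^{n-1})=1$, giving $o(2^{n-1}+1)-o(2^{n-1}) = 0 - 1 = -1$.

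It follows that $o$ is constant, say equal to $c$, on $\{1,\ldots,2^{n-1}\}$ and constant equal to $c-1$ on $\{2^{n-1}+1,\ldots,2^n\}$. To pin down $c$ I would sum over all vertices: $\sum_i o(i) = \binom{2^n}{2} = 2^{n-1}(2^n-1)$, while the two-value description gives $\sum_i o(i) = 2^{n-1}c + 2^{n-1}(c-1) = 2^{n-1}(2c-1)$; comparing yields $c = 2^{n-1}$. Finally $\ind(v_i) = (2^n-1) - o(i)$ gives in-degree $2^{n-1}-1$ on the first half and $2^{n-1}$ on the second, so the tallies are $(2^{n-1}-1,2^{n-1})$ and $(2^{n-1},2^{n-1}-1)$ respectively, as claimed.
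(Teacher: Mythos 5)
Your proof is correct, but it reaches the out-degree computation by a genuinely different route from the paper. The paper simply quotes Stockmeyer's Lemma 1(b) (reproved and generalised as its Lemma \ref{L:halves}), whose proof uses the reflection involution $j\mapsto 2i-j \bmod 2^n$ to pair each potential neighbour $j\notin\{i, i\pm 2^{n-1}\}$ with a partner $j'$ so that $v_i$ dominates exactly one of $v_j,v_{j'}$, and then accounts separately for the exceptional element $i\pm 2^{n-1}$; the arithmetic input is Lemma \ref{L:num}, that $\odd((j-i)\pm 2^n)\equiv_4\odd(i-j)$ away from the exceptional differences. You instead compare \emph{consecutive} vertices via the shift $j\mapsto j+1$, reduce $\outd(v_{i+1})-\outd(v_i)$ to a boundary term, kill that term with the identity $\odd(2^n-i)\equiv -\odd(i)\pmod 4$ for $i\neq 2^{n-1}$ (the same kind of mod-$4$ odd-part computation as Lemma \ref{L:num}, and your justification $a\le n-2$ is right), and then fix the constant by summing out-degrees to $\binom{2^n}{2}$. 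Both arguments are elementary and of comparable length; the telescoping-plus-total-count device is arguably cleaner for $T_n$ itself. What the paper's reflection argument buys, and yours does not, is the more general Lemma \ref{L:halves} about arbitrary arithmetic progressions $x_i=x+2(i-1)$, i.e.\ domination counts of a vertex \emph{relative to} such sub-progressions; that relative version is needed again later (in the proof of Lemma \ref{L:Ddiff}), so the paper's detour is not wasted. As a proof of Lemma \ref{L:tally} alone, your argument is complete.
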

\begin{proof}
By \cite[Lemma 1(b)]{Stock77}, which we essentially generalize as Lemma \ref{L:halves} below, the out-degree of the first $2^{n-1}$ vertices is $2^{n-1}$, and the out-degree of the second $2^{n-1}$ vertices is $2^{n-1}-1$. Moreover, by elementary number theory, between every pair of distinct vertices in $T_n$ there must be exactly one edge. So, for each vertex $v$ of $T_n$, we must have $\ind(v)+\outd(v) = 2^n-1$, and the result follows immediately.
\end{proof}

\begin{cor}\label{C:tallyM}
In the graph with adjacency matrix $M_{m,n}$, the tallies of the vertices are described in the table in Figure \ref{F:tallyM}.
\end{cor}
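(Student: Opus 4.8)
The plan is to compute each vertex's in- and out-degree as a sum of an \emph{intra-block} contribution (edges inside the vertex's own tournament) and a \emph{cross-block} contribution (edges to the other tournament), reading each off the block structure of $M_{m,n}$. I would split the $p=2^m+2^n$ vertices into four classes: the first and second halves of the $T_m$-block (indices $1\le i\le 2^{m-1}$ and $2^{m-1}<i\le 2^m$) and the first and second halves of the $T_n$-block (the analogous split of the indices $2^m<i\le p$). The intra-block contributions are supplied directly by Lemma \ref{L:tally}: a first-half (resp.\ second-half) vertex of $T_m$ contributes $(2^{m-1}-1,2^{m-1})$ (resp.\ $(2^{m-1},2^{m-1}-1)$) to its own tally, and symmetrically for $T_n$ with $m$ replaced by $n$.

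The heart of the argument is the cross-block count, which I claim is the \emph{same} for every vertex of a given block. Fix a $T_m$-vertex $v_i$. Its cross out-edges are the entries $M_{m,n}[i,j]=w$ (if $i+j$ is even) or $x$ (if $i+j$ is odd) as $j$ runs over the $2^n$ column indices of the $T_n$-block. Since these $j$ form a run of $2^n$ consecutive integers, exactly $2^{n-1}$ of them satisfy $j\equiv i\pmod 2$ and $2^{n-1}$ satisfy $j\not\equiv i\pmod 2$, \emph{independently of $i$}; hence $v_i$ has exactly $2^{n-1}w+2^{n-1}x=2^{n-1}(w+x)$ cross out-edges (reading $w,x\in\{0,1\}$). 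The same parity count applied to the lower-left block gives every $T_m$-vertex the cross in-degree $2^{n-1}(y+z)$, and symmetrically every $T_n$-vertex has cross out-degree $2^{m-1}(y+z)$ and cross in-degree $2^{m-1}(w+x)$.

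Summing the intra- and cross-block contributions for each of the four classes then yields precisely the four tallies recorded in Figure \ref{F:tallyM}; I would sanity-check one row and one column of $M_{3,2}$ against the resulting formulas to catch any index slip. The step most in need of care is the parity count, which relies on the relevant run length being even: this is automatic for the $T_m$-block (as $m>n\ge 0$ forces $m\ge 1$) and holds for the $T_n$-block provided $n\ge 1$. The case $n=0$ is genuinely degenerate — the $T_n$-block is a single vertex, uniformity across a block fails, and the expressions $2^{n-1}(\cdot)$ cease to make sense — so I would either assume $n\ge 1$ throughout (as Lemma \ref{L:tally} already implicitly does) or dispose of this single-vertex case directly by hand.
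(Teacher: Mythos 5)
Your proof is correct and follows essentially the same route as the paper: intra-block degrees from Lemma \ref{L:tally} plus a uniform cross-block contribution of $2^{n-1}(w+x)$, $2^{n-1}(y+z)$ (resp.\ $2^{m-1}(\cdot)$) read off the alternating pattern in the off-diagonal blocks of $M_{m,n}$. Your explicit parity count and your caveat about $n=0$ (where the table's $2^{n-1}$ terms degenerate and the cross-degree of a $T_m$-vertex genuinely depends on its parity) are both sound additions — the paper silently assumes $n\ge 1$ here and handles $n=0$ separately later, in Corollary \ref{C:u-and-v}.
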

\begin{proof}
This follows from Lemma \ref{L:tally}. The vertices with indices from $\{1,\ldots, 2^{m}\}$ form a copy of $T_m$, and so the lemma tells us the in- and out-degrees for these elements with respect to each other. The definition of $M_{m,n}$ tells us that each element gets an additional $2^{n-1}y + 2^{n-1}z$ to its in-degree, and an additional $2^{n-1}w + 2^{n-1}x$ to its out-degree. Similarly, the vertices with indices from $\{2^m + 1,\ldots, 2^m+2^n\}$ form a copy of $T_n$, which tells us the in- and out-degrees of these elements relative to each other, and each vertex gets an additional $2^{m-1}y + 2^{m-1}z$ to its in-degree, and $2^{m-1}w + 2^{m-1}x$ to its out-degree.
\end{proof}

\begin{figure}
\begin{tabular}{c | c }
 vertex index & tally \\
\hline
$\{1,\ldots ,2^{m-1}\}$ & $(2^{m-1} + 2^{n-1}y+2^{n-1}z - 1 \phantom{x},\phantom{x} 2^{m-1}+2^{n-1}w + 2^{n-1}x)$\\
$\{2^{m-1}+1,\ldots,2^m\}$ & $(2^{m-1} + 2^{n-1}y+2^{n-1}z \phantom{x},\phantom{x} 2^{m-1}+2^{n-1}w + 2^{n-1}x -1)$\\ 
$\{2^m + 1,\ldots, 2^m + 2^{n-1}\}$ & $(2^{n-1} + 2^{m-1}w+2^{m-1}x - 1 \phantom{x},\phantom{x} 2^{n-1}+2^{m-1}y + 2^{m-1}z)$\\
$\{2^m + 2^{n-1},\ldots, 2^m+2^n\}$ & $(2^{n-1} + 2^{m-1}w+2^{m-1}x \phantom{x},\phantom{x} 2^{n-1}+2^{m-1}y + 2^{m-1}z - 1)$
\end{tabular}
\caption{The tallies of the vertices in the graph with adjacency matrix $M_{m,n}$.}
\label{F:tallyM} 
\end{figure}

\begin{cor}\label{C:u-and-v}
Let $0\leq n < m$, and let $G$ be a graph with adjacency matrix $M_{m,n}$ such that $w,x,y,z\in\{0,1\}$ (e.g. one of the Stockmeyer graphs from Figure \ref{F:families}). For convenience, we assume the vertices of $G$ are the numbers $\{1,\ldots,2^m+2^n\}$. Let $u$ be a vertex from $\{1,\ldots, 2^m\}$, and let $v$ be a vertex from $\{2^m+1,\ldots, 2^m+ 2^n\}$. Then, if $n\geq 1$, $u$ and $v$ have the same tally if and only if both the following conditions hold:
\begin{enumerate}[(i)]
\item $u\in \{1,\ldots, 2^{m-1}\}$ and $v\in\{2^m+1,\ldots, 2^m+ 2^{n-1}\}$, or $u\in \{2^{m-1} + 1,\ldots, 2^m\}$ and $v\in \{2^m + 2^{n-1},\ldots, 2^m+2^n\}$.  
\item $y+z = 1$ and $w+x = 1$.
\end{enumerate}
If $n= 0$  the $T_n$ part of $G$ has a single node so $v=2^m+1$ is odd.  Then $u$ and $v$ have the same tally if and only if one of the following holds:
\begin{enumerate}[(i)]
\item $u\in \{1,\ldots, 2^{m-1}\}$ and $u + v$ is even (i.e. $u$ is odd) and $w=0$, $x=1$, $y = 1$ and $z = 0$.
\item $u\in \{1,\ldots, 2^{m-1}\}$ and $u + v$ is odd (i.e. $u$ is even) and $w=1$, $x=0$, $y=0$, and $z = 1$.
\item $u\in \{2^{m-1}+1,\ldots, 2^{m}\}$ and $u + v$ is even (i.e. $u$ is odd) and $w=1$, $x=0$, $y = 0$ and $z = 1$.
\item $u\in \{2^{m-1}+1,\ldots, 2^{m}\}$ and $u + v$ is odd (i.e. $u$ is even) and $w=0$, $x = 1$, $y=1$ and $z = 0$.
\end{enumerate}
\end{cor}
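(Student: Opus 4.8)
The plan is to read off the tallies of $u$ and $v$ from Corollary~\ref{C:tallyM} (Figure~\ref{F:tallyM}) and solve the equation $\tau(u)=\tau(v)$ directly. Since $w,x,y,z\in\{0,1\}$, it is convenient to abbreviate $a=w+x$ and $b=y+z$, each of which lies in $\{0,1,2\}$. I would treat $n\geq 1$ and $n=0$ separately, because when $n=0$ the $T_n$-part of $G$ degenerates to the single vertex $v=2^m+1$, so the factor $2^{n-1}$ appearing in Figure~\ref{F:tallyM} is no longer meaningful and $\tau(v)$ must be computed by hand.

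For $n\geq 1$, Figure~\ref{F:tallyM} splits each of $\{1,\dots,2^m\}$ and $\{2^m+1,\dots,2^m+2^n\}$ into a ``first half'' and a ``second half'', giving four combinations for the pair $(u,v)$. In each combination I would equate in-degrees and out-degrees to get two scalar equations in $a$ and $b$. In the two \emph{matching} combinations (both first halves, or both second halves) these reduce to $2^{m-1}(1-a)=2^{n-1}(1-b)$ and $2^{m-1}(1-b)=2^{n-1}(1-a)$; adding them gives $(2^{m-1}-2^{n-1})\big[(1-a)+(1-b)\big]=0$, so $a+b=2$ because $m>n$, while subtracting gives $(2^{m-1}+2^{n-1})(b-a)=0$, so $a=b$, whence $a=b=1$, i.e. $w+x=y+z=1$. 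In the two \emph{mismatched} combinations the same manipulation yields $a+b=2$ together with $(2^{m-1}+2^{n-1})(b-a)=\pm 2$, which has no integer solution since $2^{m-1}+2^{n-1}\geq 3$. Conversely $a=b=1$ makes both coordinates agree in the matching combinations, so the equivalence is sharp; this is exactly conditions (i) and (ii).

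For $n=0$ I would first compute $\tau(v)$ for the singleton $v=2^m+1$ straight from $M_{m,0}$: the entries of column $v$ alternate $w,x$ and those of row $v$ alternate $y,z$ across the $2^m$ vertices of the $T_m$-part, each value occurring $2^{m-1}$ times, so $\tau(v)=(2^{m-1}(w+x),\,2^{m-1}(y+z))$. For $u$ I would use Lemma~\ref{L:tally} for the contribution from within $T_m$ and add the single edge between $u$ and $v$, whose direction(s) are governed by $w,x$ (giving an in-edge of $v$) and $y,z$ (giving an out-edge of $v$) according to the parity of $u$. The decisive observation is that both coordinates of $\tau(v)$ are multiples of $2^{m-1}$ (namely $0$, $2^{m-1}$ or $2^m$), whereas both coordinates of $\tau(u)$ differ from $2^{m-1}$ by at most $1$; hence for $m\geq 2$ equality forces $w+x=y+z=1$ together with $\tau(u)=(2^{m-1},2^{m-1})$. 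Reading back which half $u$ lies in and which parity of $u$ makes $\tau(u)=(2^{m-1},2^{m-1})$ then produces precisely the four cases (i)--(iv).

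The main obstacle is bookkeeping rather than any single deep step: one must track carefully the ``$-1$'' corrections that distinguish the two halves in Figure~\ref{F:tallyM}, and the parity-dependent single edge in the $n=0$ analysis. The one genuinely nontrivial point is the magnitude argument for $n=0$, which relies on $2^{m-1}\geq 2$ so that $2^{m-1}$ is the only multiple of $2^{m-1}$ within distance $1$ of itself; consequently the smallest case $m=1$ lies outside the scope of this spacing argument and must be examined directly. Everything else reduces to routine arithmetic once the four case-combinations are laid out.
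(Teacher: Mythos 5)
Your proposal is correct for $m\geq 2$ and follows essentially the same route as the paper: both read the tallies off Corollary \ref{C:tallyM} and solve $\tau(u)=\tau(v)$ coordinatewise. The only real divergence is in how the ``mismatched halves'' combinations are excluded when $n\geq 1$: the paper observes that the in-degrees of the two halves have opposite parities, whereas you take the sum and difference of the two degree equations and note that $(2^{m-1}+2^{n-1})(b-a)=\pm 2$ has no integer solution. Both work, and your version has the small advantage of treating all four half-combinations uniformly. For $n=0$, your explicit ``only multiple of $2^{m-1}$ within distance $1$ of $2^{m-1}$ is $2^{m-1}$ itself'' observation is exactly the divisibility fact the paper uses implicitly in each of its four subcases.

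Your flag that $m=1$ escapes the spacing argument is in fact sharper than the paper, whose own proof tacitly assumes $w,y<2^{m-1}$ and therefore also only covers $m\geq 2$. However, the direct examination you defer cannot succeed as the statement is written: for $m=1$, $n=0$ and $(w,x,y,z)=(1,0,1,1)$ (the graph $F_{1,0}$ from Figure \ref{F:families}) the edges are $1\to 2$, $1\to 3$, $3\to 1$, $3\to 2$, so $\tau(1)=\tau(3)=(1,2)$ even though none of the cases (i)--(iv) holds. So the $n=0$ clause of the corollary fails at $m=1$ and must either be restricted to $m\geq 2$ or have its case list amended; with that caveat, your argument establishes everything the paper's proof does.
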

\begin{proof}
Suppose first that $n\geq 1$. If $u\in \{1,\ldots, 2^{m-1}\}$ then the in-degree of $u$ is odd, and the in-degree of $v$ is odd if and only if $v\in\{2^m + 1,\ldots, 2^m + 2^{n-1}\}$. Similarly, if $u\in \{2^{m-1}+1,\ldots,2^m\}$ then its in-degree is even, and $v$'s in-degree is even if and only if $v\in\{2^m + 2^{n-1},\ldots, 2^m+2^n\}$. Thus we have the necessity of (i). 

Suppose then that (i) holds, and let $u\in \{1,\ldots, 2^{m-1}\}$ and  $v\in\{2^m + 1,\ldots, 2^m + 2^{n-1}\}$. Then the in-degree of $u$ is $2^{m-1} + 2^{n-1}y+2^{n-1}z - 1$, and the in-degree of $v$ is $2^{n-1} + 2^{m-1}w+2^{m-1}x - 1$. Noting that $n<m$, we have
\begin{align*}
&2^{m-1} + 2^{n-1}y+2^{n-1}z - 1 = 2^{n-1} + 2^{m-1}w+2^{m-1}x - 1\\
\iff& 2^{m-n} + y + z = 1 + 2^{m-n}w + 2^{m-n}x\\
\iff& y+z - 1 = 2^{m-n}(w+x-1), 
\end{align*}  
and this occurs if and only if $y+z = 1$ and $w+x = 1$ (remember that $w,x,y,z$ can be only 0 or 1). Moreover, if this property holds then it's easy to check the out-degrees will also be the same. The argument for $u\in \{2^{m-1}+1,\ldots,2^m\}$ and  $v\in\{2^m + 2^{n-1},\ldots, 2^m+2^n\}$ is essentially the same.

Finally, if $n = 0$, then $v=2^m+1$ and exactly half of the elements of $\{1,\ldots,2^m\}$ have even sum with $v$. So the tally of $v$ is $(2^{m-1}w + 2^{m-1}x, \; 2^{m-1}y+2^{m-1}z)$.  

Suppose first that $u$ is odd. Then the tally of $u$ must be either $(2^{m-1}+y-1, \; 2^{m-1}+w)$, when $u\in  \{1,\ldots, 2^{m-1}\}$, or $(2^{m-1}+y, \; 2^{m-1}+w - 1)$, when $u\in \{2^{m-1}+1,\ldots, 2^{m}\}$. Suppose first that $\tau(u)= (2^{m-1}+y-1,\; 2^{m-1}+w)$. Then to have $\tau(u)=\tau(v)$ we must have $y=1$ and $w=0$. It then follows that we must have $x=1$ and $z=0$. Thus in this case $\tau(u)=\tau(v)$ if and only if $w=0$, $x=1$, $y = 1$ and $z = 0$, as claimed. Similar reasoning applies to the remaining case when $u$ is odd, and to the two cases where $u$ is even. 
\end{proof}

\begin{cor}\label{C:isom}
Let $Z\in\{C,D\}$, and let $G$ be either $Z_{m,n}$ or $Z^*_{m,n}$ for $0<n<m$. Let $u\in\{1,\ldots,2^m\}$, and let $v\in\{2^m+1,\ldots,2^m+2^n\}$. Let $S$ be the half of $\{1,\ldots,2^m\}$ containing $u$ (so $S = \{1,\ldots,2^{m-1}\}$ or $\{2^{m-1}+1,\ldots,2^m\}$), and define $S'$ to be the half of $\{2^m+1,\ldots,2^m+2^n\}$ containing $v$. Suppose $\tau(u)=\tau(v)$. Then the subgraph of $G$ generated by $S\cup S'$ is isomorphic to either $Z_{(m-1),(n-1)}$ (if $G = Z_{m,n}$) or $Z^*_{(m-1),(n-1)}$ (if $G = Z^*_{m,n}$). 
\end{cor}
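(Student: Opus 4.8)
The plan is to exhibit an explicit relabelling identifying the subgraph induced on $S\cup S'$ with the graph whose adjacency matrix is $M_{m-1,n-1}$, carrying over the \emph{same} values of $w,x,y,z$; since those four values are precisely what distinguishes $Z$ from $Z^*$ in Figure \ref{F:families}, producing such an isomorphism settles the statement. So the whole argument reduces to checking that a ``compression'' map sends the four blocks of the induced adjacency matrix to the four blocks of $M_{m-1,n-1}$.

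First I would pin down which halves $S$ and $S'$ are. For every $Z\in\{C,D\}$ and for the starred versions one reads off from Figure \ref{F:families} that $w+x=1$ and $y+z=1$, so condition (ii) of Corollary \ref{C:u-and-v} holds automatically. Hence $\tau(u)=\tau(v)$ forces condition (i): either $u\in\{1,\dots,2^{m-1}\}$ and $v\in\{2^m+1,\dots,2^m+2^{n-1}\}$, or $u$ and $v$ both lie in their respective second halves. Thus $S$ and $S'$ are always \emph{matching} halves, leaving exactly two cases. This, incidentally, is why the statement is restricted to $Z\in\{C,D\}$: for the other families $w+x$ or $y+z$ differs from $1$, so no cross-part pair of equal tally exists and the statement is vacuous.

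Next I would define the compression bijection $\phi$ onto $\{1,\dots,2^{m-1}+2^{n-1}\}$ by subtracting a fixed offset from each block: on $S$ send $i\mapsto i$ in the first-half case and $i\mapsto i-2^{m-1}$ in the second-half case, and send the $T_n$-half of $S'$ into the last $2^{n-1}$ coordinates by the analogous shift. For the two diagonal blocks it suffices to observe that the edge relation of $T_k$ depends only on $\odd(j-i)$, hence only on the difference $j-i$, and that $T_{k-1}$ is literally the restriction of $T_k$ to $\{1,\dots,2^{k-1}\}$; since each contiguous half of the index set is carried by a fixed translation onto $\{1,\dots,2^{k-1}\}$ preserving all differences, the $T_m$-half compresses to $T_{m-1}$ and the $T_n$-half to $T_{n-1}$.

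The remaining, and in my view crux, step is the two off-diagonal blocks, where the entry is $w$ or $x$ (respectively $y$ or $z$) according to the parity of $i+j$. Under $\phi$ the indices are translated by the offsets $2^m$, $2^{m-1}$ and, in the second-half case, $2^{n-1}$; the alternation — and hence the assignment of $w,x,y,z$, which is what transfers the star from $G$ — is preserved exactly when every offset is even, so that $i+j$ keeps its parity. The offsets $2^m$ and $2^{m-1}$ are always even (as $m\geq 2$), which settles the first-half case for all $n$ and the second-half case for $n\geq 2$. The delicate point I expect to be the main obstacle is the second-half case with $n=1$: there $2^{n-1}=1$ is odd, the $T_{n-1}=T_0$ block collapses to a single vertex, and the parity shift genuinely appears, so one must check this boundary case on its own and track with care whether the star inherited from $G$ really is the correct one.
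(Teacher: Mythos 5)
Your approach is essentially the paper's own, carried out in more detail: identify the matching halves via Corollary \ref{C:u-and-v}, note that each half of the $T_m$ (resp.\ $T_n$) block is a translated copy of $T_{m-1}$ (resp.\ $T_{n-1}$) because the edge relation depends only on $j-i$, and check that the off-diagonal $w,x,y,z$ pattern transfers because the compression offsets preserve the parity of $i+j$. The paper simply asserts the off-diagonal match and dismisses the second-half case as ``essentially the same''; your suspicion about that case is well founded, and the boundary you flag is not merely delicate but fatal to the statement as written. Take $G=C_{2,1}$: by Corollary \ref{C:tallyM} the vertices $3,4,6$ all have tally $(3,2)$, so with $u\in\{3,4\}$ and $v=6$ we get $S=\{3,4\}$ and $S'=\{6\}$. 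The induced subgraph has edges $3\to 4$, $4\to 6$, $6\to 4$, with tally multiset $\{(0,1),(2,1),(1,1)\}$, whereas $C_{1,0}$ has edges $1\to 2$, $1\to 3$, $3\to 1$, with tally multiset $\{(1,2),(1,0),(1,1)\}$; the induced subgraph is isomorphic to $C^*_{1,0}$ and not to $C_{1,0}$. This is exactly your parity computation: in the second-half case the total offset is $2^m+2^{n-1}$, which is odd precisely when $n=1$, so $w\leftrightarrow x$ and $y\leftrightarrow z$ swap, and that simultaneous swap converts $Z$ into $Z^*$ for $Z\in\{C,D\}$.

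So the one case you leave open cannot be closed: the corollary needs to be restated so that in the second-half case with $n=1$ the induced subgraph is $Z^*_{(m-1),0}$ when $G=Z_{m,n}$ and $Z_{(m-1),0}$ when $G=Z^*_{m,n}$. The damage downstream appears limited: in Lemma \ref{L:no-corr2} the corollary is ultimately used only to conclude that, at the final step, no vertex of the $T_{m-n}$ copy shares a tally with the lone remaining $T_0$ vertex, and that conclusion holds for $C^*_{(m-n),0}$ just as for $C_{(m-n),0}$; and Lemma \ref{L:Ddiff} already tracks a star flip according to the parity of $v$. But as a proof of the corollary as stated, both your argument and the paper's stop one case short, and in that case the statement is false.
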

\begin{proof}
Assume first that $S = \{1,\ldots,2^{m-1}\}$. By Corollary \ref{C:u-and-v}, we must have $S' = \{2^m+1,\ldots,2^m+2^{n-1}\}$. Now, $S$ and $S'$ are isomorphic to $T_{m-1}$ and $T_{n-1}$ respectively, and the pattern of edges between $S$ and $S'$ in $G$ is the same as for $T_{m-1}$ and $T_{n-1}$ in either $Z_{(m-1),(n-1)}$ or $Z^*_{(m-1),(n-1)}$, depending on what $G$ is. The argument for when $S= \{2^{m-1}+1,\ldots,2^m\}$ is essentially the same.
\end{proof}

Note that, by Corollary \ref{C:u-and-v}, the assumption that $\tau(u)=\tau(v)$ in the result above excludes the possibility that $Z\in\{A,B,E,F\}$. We are now in position to build up some facts about tally-spectra in our families of graphs.

\begin{lem}\label{L:tallyT}
For all $n\geq 0$, the tally-sequences of the elements of $T_n$ are distinct.
\end{lem}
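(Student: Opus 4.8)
The plan is to show that the tally-sequence refines the vertex set $\{1,\ldots,2^n\}$ of $T_n$ by repeatedly bisecting it into consecutive blocks, reaching singletons after $n$ steps. The key structural observation is that the edge relation of $T_n$ depends only on the difference of indices, and is therefore translation invariant: for any $a$ and any $m$, the subgraph induced by the consecutive block $\{a+1,\ldots,a+2^m\}$ is isomorphic to $T_m$ via the relabelling $v_{a+k}\mapsto v_k$, since there is an edge from $v_{a+k}$ to $v_{a+l}$ exactly when $\odd(l-k)\equiv 1 \bmod 4$. Under this isomorphism the first $2^{m-1}$ vertices of the block correspond to the first half of $T_m$ and the last $2^{m-1}$ to the second half.

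First I would set up an induction on the refinement level, with hypothesis: for each $k$ with $0\leq k\leq n-1$, the partition of the vertices of $T_n$ according to the value of $\vec{\tau}^k(v)$ consists exactly of the consecutive dyadic blocks of size $2^{n-k-1}$. The base case $k=0$ is immediate from Lemma \ref{L:tally}, which asserts that $t_0^v=\tau(v)$ equals $(2^{n-1}-1,2^{n-1})$ on the first half and $(2^{n-1},2^{n-1}-1)$ on the second half; since these values are distinct, the $\vec{\tau}^0$-classes are precisely the two halves, each of size $2^{n-1}$.

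For the inductive step, suppose the $\vec{\tau}^k$-partition is the set of consecutive blocks $B$ of size $2^{n-k-1}$, and take $0\leq k\leq n-2$. Since all vertices in a block $B$ share the same history $(t_0,\ldots,t_k)$, the class $X_k^v$ used to compute $t_{k+1}$ is exactly $B$ for every $v\in B$. Hence $t_{k+1}^v=\tau_B(v)$ is the tally of $v$ computed inside $B$, which by translation invariance is a copy of $T_{n-k-1}$ with $n-k-1\geq 1$. Applying Lemma \ref{L:tally} inside this copy shows that $t_{k+1}^v$ takes the value $(2^{n-k-2}-1,2^{n-k-2})$ on the first half of $B$ and the distinct value $(2^{n-k-2},2^{n-k-2}-1)$ on the second half. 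Thus the $\vec{\tau}^{k+1}$-partition refines each $B$ into its two consecutive halves of size $2^{n-k-2}=2^{n-(k+1)-1}$, completing the induction. Taking $k=n-1$ yields blocks of size $2^0=1$, so the vertices of $T_n$ have pairwise distinct values $\vec{\tau}^{n-1}(v)$, and a fortiori distinct tally-sequences. (The case $n=0$ is trivial, as $T_0$ has a single vertex.)

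I expect the only delicate point to be the identification, in the inductive step, of the relative tally $\tau_{X_k^v}(v)$ with the absolute tally of $v$ inside the block regarded as a copy of $T_{n-k-1}$. This hinges on two facts: that $X_k^v$ is precisely the whole block (because, by the induction hypothesis, every vertex of the block has the same truncated tally-sequence), so that only within-block edges contribute to $t_{k+1}^v$; and that the translation isomorphism carries the first/second-half splitting of Lemma \ref{L:tally} to the first/second half of the block. Once these are in place, the remainder is a routine counting induction.
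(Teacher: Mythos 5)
Your proof is correct and follows essentially the same route as the paper, which simply says to iterate the argument of Lemma \ref{L:tally}, noting that each half of $T_n$ is a copy of $T_{n-1}$. Your write-up makes explicit the two points the paper leaves implicit (that the $\vec\tau^k$-classes are exactly the dyadic blocks, so $X_k^v$ is the whole block, and that translation invariance lets Lemma \ref{L:tally} be applied inside each block), but the underlying argument is identical.
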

\begin{proof}
This follows by iterating the argument from the proof of Lemma \ref{L:tally}, noting that by dividing the vertices of $T_n$ into two halves as in that proof we get two copies of $T_{n-1}$.
\end{proof}

Note that it follows immediately from the lemma above and Proposition \ref{P:tally-isom} that $T_n$ has only the identity automorphism. This is \cite[Lemma 1(c)]{Stock77}, and the argument given there amounts to showing the tally sequences of the vertices are all different.

\begin{lem}\label{L:no-corr}
Let $m\geq 1$ and let $0\leq n < m$. Then for all $Z\in\{A,B,E,F\}$, there is no $u$ in the $T_m$ part of $Z_{m,n}$ and $v$ in the $T_n$ part of $Z_{m,n}$ such that the tally of $u$ is the same as the tally of $v$. The same is also true for $Z^*_{m,n}$.  
\end{lem}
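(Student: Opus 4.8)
The plan is to reduce everything to Corollary \ref{C:u-and-v}, which already gives an exact criterion for when a vertex $u$ in the $T_m$ part and a vertex $v$ in the $T_n$ part of a graph with adjacency matrix $M_{m,n}$ share a tally. Once that criterion is in hand, the lemma becomes a finite bookkeeping exercise: I simply read off from Figure \ref{F:families} that the criterion is never satisfied by the families $A$, $B$, $E$, $F$, nor by their starred versions. So no genuinely new calculation is needed; the work is all in organising the case check and making sure the starred/unstarred variants and the degenerate case $n=0$ are all covered.

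First I would dispose of the case $n \geq 1$. Here Corollary \ref{C:u-and-v} says that $\tau(u) = \tau(v)$ can hold only if its condition (ii) holds, namely $y + z = 1$ and $w + x = 1$. Consulting Figure \ref{F:families}: for $A_{m,n}$ and $A^*_{m,n}$ we have $y = z = 0$, so $y + z = 0$; for $B_{m,n}$ and $B^*_{m,n}$ we have $w = x = 0$, so $w + x = 0$; for $E_{m,n}$ and $E^*_{m,n}$ we have $w = x = 1$, so $w + x = 2$; and for $F_{m,n}$ and $F^*_{m,n}$ we have $y = z = 1$, so $y + z = 2$. In every one of these eight cases condition (ii) fails, so by Corollary \ref{C:u-and-v} no pair $u,v$ with equal tallies can exist.

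Next I would treat $n = 0$ separately, since Corollary \ref{C:u-and-v} supplies a different criterion there. Each of its four cases pins down the entire tuple $(w,x,y,z)$: cases (i) and (iv) force $(w,x,y,z) = (0,1,1,0)$, while cases (ii) and (iii) force $(w,x,y,z) = (1,0,0,1)$. Comparing with Figure \ref{F:families}, these are exactly the assignments defining $D^*_{m,0}$ and $D_{m,0}$. Since none of $A$, $A^*$, $B$, $B^*$, $E$, $E^*$, $F$, $F^*$ carries either assignment, none of the four cases can occur, and again there is no matching pair. Combining the two cases gives the lemma for all $Z \in \{A,B,E,F\}$ and for the corresponding $Z^*$.

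I do not anticipate a real obstacle: the substantive content is already carried by Corollary \ref{C:u-and-v}, and this lemma is essentially the observation that a shared tally between the two parts singles out precisely the $C$ and $D$ families (those with $w+x = y+z = 1$), and hence excludes $A$, $B$, $E$, $F$. The only points demanding care are handling the starred and unstarred variants uniformly and not overlooking the degenerate $n=0$ case, which must be checked against its own four-way criterion rather than condition (ii).
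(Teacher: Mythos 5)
Your proposal is correct and takes essentially the same route as the paper, whose proof is just the one-line observation that the lemma follows from Corollary \ref{C:u-and-v} together with the values of $w,x,y,z$ in Figure \ref{F:families}; you have simply spelled out the case check (including the separate $n=0$ criterion) that the paper leaves implicit.
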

\begin{proof}
This follows immediately from Corollary \ref{C:u-and-v}. Specifically, note the values of $w,x,y,z$ from Figure \ref{F:families}.
\end{proof}

\begin{lem}\label{L:no-corr2}
Let $m\geq 1$ and let $0\leq n < m$. Then for all $Z\in\{A,B,C,E,F\}$, the tally-sequences of vertices of $Z_{m,n}$ are all distinct, and the same is true for $Z^*_{m,n}$.
\end{lem}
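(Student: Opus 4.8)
The plan is to reduce each case to the already-established fact that the tournament $T_m$ has pairwise distinct tally-sequences (Lemma \ref{L:tallyT}), by analysing how the global tally-refinement on $Z_{m,n}$ interacts with the split of its vertices into the $T_m$-part $\{1,\dots,2^m\}$ and the $T_n$-part $\{2^m+1,\dots,2^m+2^n\}$. The governing observation is that the classes $X^v_k$ used to compute $t^v_{k+1}$ only ever shrink, $X^v_{k+1}\subseteq X^v_k\subseteq X^v_0$, so two vertices in distinct $t_0$-classes can never be merged and are automatically separated by their tally-sequences. Hence it suffices to show that within each $t_0$-class the refinement descends to singletons.

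For $Z\in\{A,B,E,F\}$ the two parts are \emph{separated} at level $0$: by Lemma \ref{L:no-corr} no $T_m$-part vertex shares a tally with any $T_n$-part vertex, so every $t_0$-class lies inside a single part. When $n\geq 1$ the cross-edges add the \emph{same} pair $(2^{n-1}y+2^{n-1}z,\,2^{n-1}w+2^{n-1}x)$ to the in/out-degree of every $T_m$-part vertex, and symmetrically a constant pair to every $T_n$-part vertex (here $2^m$ is even). Thus the $t_0$-partition of the $T_m$-part agrees with that of the stand-alone $T_m$ up to a constant shift, and from step $1$ on the refinement counts only edges internal to the part; so the refinement restricted to the $T_m$-part produces exactly the same sequence of partitions as the refinement of the stand-alone $T_m$, and likewise for the $T_n$-part. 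By Lemma \ref{L:tallyT} both reach singletons, giving distinct tally-sequences throughout $Z_{m,n}$ and $Z^*_{m,n}$.

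For $Z=C$ with $n\geq 1$ the parts are \emph{not} separated: by Corollary \ref{C:u-and-v} the first halves of the two parts share one tally and the second halves another, so the two $t_0$-classes $\alpha$ (first halves) and $\beta$ (second halves) each meet both parts. I would induct on $n$. By Corollary \ref{C:isom} the subgraph induced on $\alpha$ is isomorphic to $C_{(m-1),(n-1)}$ (and to $C^*_{(m-1),(n-1)}$ in the starred case), and similarly for $\beta$. Since $\alpha$ is an entire $t_0$-class, the value $t^v_1$ for $v\in\alpha$ is precisely the global tally of $v$ inside this copy of $C_{(m-1),(n-1)}$, and an easy induction on the stage shows that the tally-sequence of $v$ in $C_{m,n}$, read from index $1$, equals the tally-sequence of the corresponding vertex in $C_{(m-1),(n-1)}$. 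The inductive hypothesis then separates the vertices of $\alpha$ (and of $\beta$), and since $\alpha,\beta$ are separated at level $0$ we are done.

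The base case of this induction is $n=0$, which is also the remaining case for $A,B,E,F$. When $n=0$ the $T_n$-part is the single vertex $v_{2^m+1}$, and the $n=0$ clauses of Corollary \ref{C:u-and-v} show that for $Z\in\{A,B,C,E,F\}$ this vertex never shares a tally with any $T_m$-part vertex, so it is separated and contributes its own singleton. The genuine subtlety is that, with a single $T_n$-vertex, the cross-contribution to a $T_m$-part vertex is parity-dependent rather than constant, so the $t_0$-partition of the $T_m$-part is a possibly strict \emph{refinement} of that of the stand-alone $T_m$, not an exact copy. What must be checked is that this parity offset never merges the two halves of $T_m$: a merge would force the offset difference between odd- and even-indexed vertices to equal $(1,-1)$, which occurs precisely for $(w,x,y,z)$ equal to the values for $D$ or $D^*$ and for no other values — exactly why $D$ is excluded from the statement. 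Granting this, the initial colouring is only finer than in the stand-alone $T_m$, and the halving argument behind Lemma \ref{L:tallyT} still drives the $T_m$-part down to singletons. The main obstacle is precisely this combination: verifying that refining the initial colouring by parity does not stall the (comparatively weak) tally-refinement before singletons are reached, and that the recursive descent to $C_{(m-1),(n-1)}$ via Corollary \ref{C:isom} preserves tally-sequences stage by stage.
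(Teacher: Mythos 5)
Your overall route is the paper's: separate the two parts at level $0$ via Lemma \ref{L:no-corr} (resp.\ Corollary \ref{C:u-and-v}), use Corollary \ref{C:isom} to descend recursively in the $C$ case, and invoke Lemma \ref{L:tallyT} to finish inside each part. Your organisation of the $C$ case is actually cleaner than the paper's: where the paper argues by contradiction that no $T_m$-part vertex can share a tally-sequence with a $T_n$-part vertex (via nested halves) and then appeals back to the $\{A,B,E,F\}$ argument, you induct on $n$ directly, observing that the whole $t_0$-class $\alpha$ induces a copy of $C_{(m-1),(n-1)}$ and that the tally-sequence read from index $1$ is exactly the tally-sequence in that copy. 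That index-shift argument is sound, and it packages the within-part and cross-part distinctness into a single inductive statement.

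The one genuine gap is the one you flag yourself: the claim that when the initial partition of the $T_m$-part is refined by parity (which happens in the $n=0$ base case, hence ultimately in every case, since your induction for $C$ bottoms out at $C_{(m-n),0}$), the tally refinement still reaches singletons. You cannot get this from Lemma \ref{L:tallyT} by monotonicity, because tally refinement is not monotone in the initial partition: starting finer does not a priori end finer, and this non-monotonicity is exactly what distinguishes tally sequences from colour refinement. The gap closes as follows. Each level-$0$ class of the $T_m$-part is the intersection of a half $\{1,\dots,2^{m-1}\}$ or $\{2^{m-1}+1,\dots,2^m\}$ with a parity class, i.e.\ an arithmetic progression of length $2^{m-2}$ and common difference $2$; by the computation opening the proof of Lemma \ref{L:halves}, $\odd(x_j-x_i)=\odd(j-i)$ for such a progression, so the induced tournament is isomorphic to $T_{m-2}$. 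Since $t^v_1=\tau_{X^v_0}(v)$ counts only edges internal to that class, the refinement restricted to it is literally the stand-alone refinement of $T_{m-2}$, which reaches singletons by Lemma \ref{L:tallyT}. With that observation inserted, your proof is complete, and it in fact makes explicit a point the paper's own proof passes over rather quickly.
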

\begin{proof}
For $Z\in\{A,B,E,F\}$ this is straightforward: By Lemma \ref{L:no-corr}, no element of the $T_m$ part of $Z_{m,n}$ can have the same tally-sequence as an element from the $T_n$ part of $Z_{m,n}$, and the same goes for $Z^*_{m,n}$. Moreover, using Corollary \ref{C:tallyM}, it's easy to see that the first half of the $T_m$ part of $Z_{m,n}$ all have the same tally, $t_1$ say, as do the second half, $t_2$ say, and $t_1\neq t_2$. The same applies to the $T_n$ part of $Z_{m,n}$. Now, by Lemma \ref{L:tallyT}, the tally-sequences of the elements of the $T_m$ parts and the $T_n$ parts must all be distinct from each other, and so every element of $Z_{m,n}$ has a unique tally-sequence. The same argument applies to $Z^*_{m,n}$.

For $Z = C$ we need to use Corollary \ref{C:u-and-v}. If $n = 0$ then the $T_n$ part of $C_{m,n}$ contains only a single vertex, $v$ say, and the corollary tells us that $v$ cannot have the same tally as any vertex of the $T_m$ part of $C_{m,n}$. The claim then follows by the same argument we used in the first part. 

Suppose then that $n > 0$, let $u$ be a vertex of the $T_m$ part of $C_{m,n}$, and let $v$ be a vertex of the $T_n$ part of $C_{m,n}$.  Suppose for a contradiction that $\vec{\tau}(u)=\vec{\tau}(v)$. Define $S_0$ to be $\{1,\ldots, 2^m\}$, which contains $u$, and define $S'_0$ to be $\{2^m+1,\ldots, 2^m+2^n\}$, which contains $v$. Define $S_1$ to be the half of $S_0$ containing $u$, and define $S'_1$ to be the half of $S'_0$ containing $v$. Now, assuming $n- 1 \geq 1$, we can define $S_2$ and $S'_2$ to be, respectively, the halves of $S_1$ and $S'_1$ containing $u$ and $v$. Provided $n-2\geq 1$ we can define $S_3$ and $S'_3$ similarly. In general, assuming we have defined $S_i$ and $S'_i$ and that $n-i\geq 1$, we define $S_{i+1}$ and $S'_{i+1}$ to be, respectively, the halves of $S_i$ and $S'_i$ containing $u$ and $v$. The crucial observation, which we will shortly prove, is that the tally-sequences of every element in $S_{i+1}$ and $S'_{i+1}$ agree for the first $(i+1)$ steps (i.e. $\vec{\tau}^i(w) = \vec{\tau}^i(w')$ for all $w\in S_{i+1}$ and $w'\in S'_{i+1}$).

To see why this is true, we use induction on $i$, starting with $i = 0$. The base case follows immediately from Corollary \ref{C:tallyM}. For the inductive step, suppose the claim is true for $i= k$, and also that $S_{k+1}$ and $S'_{k+1}$ are both defined (i.e. that $n-k\geq 1$). By Corollary \ref{C:isom}, the subgraph generated by $S_k\cup S'_k$ is isomorphic to $C_{(m-k),(n-k)}$. Moreover, if $u'\in S_{k+1}$ and $v'\in S'_{k+1}$, then the $(k+1)$th term of their tally sequences will be their tallies in this subgraph. By Corollaries \ref{C:tallyM} and \ref{C:u-and-v} these are the same, which gives the result.

Suppose then that we have constructed $S_n$ and $S'_n$. At this point $S'_n$ contains only a single element, and $S_n$ is a copy of $T_{(m-n)}$. Let $G$ be the graph generated by $S_n\cup S'_n$. By the argument that proves the $n = 0$ part of Corollary \ref{C:u-and-v}, the tallies of $u$ and $v$ relative to $G$ cannot be the same, and thus $\vec{\tau}(u)\neq\vec{\tau}(v)$ after all. 

Since we have now proved that no element of the $T_m$ part of $C_{m,n}$ can have the same tally-sequence as an element from the $T_n$ part, the fact that every element has a distinct tally-sequence now follows by the argument used for $Z\in\{A,B,E,F\}$ earlier. The argument for $C^*_{m,n}$ is similar. 
 
\end{proof}

Lemma \ref{L:no-corr2} seems to only be occasionally true for $D^*_{m,n}$ and $D_{m,n}$, and at different times for $D$ and $D^*$.  To get round this problem we have do a bit more work.

\begin{lem}\label{L:tallyD}
Let $m\geq 1$ and let $0\leq n < m$. Let $i\in\{1,\ldots,2^m+2^n\}$, and let $v$ and $w$ be the corresponding vertices in $D_{m,n}$ and $D^*_{m,n}$ respectively (recall that we define the pairs of Stockmeyer graphs using identically labeled sets of vertices, and this defines a correspondence). Then the tally-sequences of $v$ and $w$ agree in their first $n$ places. In other words, $\vec{\tau}^{n-1}(v) = \vec{\tau}^{n-1}(w)$.
\end{lem}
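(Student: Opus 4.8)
The plan is to argue by induction on $n$, exploiting the recursive self-similarity of the Stockmeyer construction provided by Corollary \ref{C:isom}. Throughout, $v$ and $w$ denote the vertices of index $i$ in $D_{m,n}$ and $D^*_{m,n}$ respectively.

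First I would establish the base case together with the key structural fact. By Corollary \ref{C:tallyM} the tally of a vertex in a graph with adjacency matrix $M_{m,n}$ depends only on which of the four groups it lies in, together with the sums $w+x$ and $y+z$. Since $w+x=y+z=1$ for both $D_{m,n}$ (where $w=1,x=0,y=0,z=1$) and $D^*_{m,n}$ (where $w=0,x=1,y=1,z=0$), corresponding vertices have identical tally, so $t_0^v=t_0^w$; this settles the claim for $n\le 1$. Moreover, applying Corollary \ref{C:u-and-v} (whose condition (ii) $y+z=w+x=1$ holds for both graphs), the step-$0$ tally partition collapses to exactly two classes: class $\mathrm{I}$, consisting of the first half of the $T_m$-part together with the first half of the $T_n$-part, and class $\mathrm{II}$, consisting of the two second halves. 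Crucially, because the tallies themselves are identical, this partition is literally the \emph{same} partition of the index set for $D_{m,n}$ and $D^*_{m,n}$, so $v$ and $w$ lie in the same class.

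Next comes the decomposition step. Fix $v$ in class $\mathrm{I}$ (the argument for $\mathrm{II}$ is identical). By Corollary \ref{C:isom} the induced subgraph $G'$ of $D_{m,n}$ on class $\mathrm{I}$ is isomorphic to $D_{(m-1),(n-1)}$, via the natural identification that fixes the $T_{m-1}$-indices and shifts the $T_{n-1}$-indices by $2^{m-1}$; the corresponding subgraph of $D^*_{m,n}$ is isomorphic to $D^*_{(m-1),(n-1)}$ via the \emph{same} index map, which does not depend on the edge pattern. I would then prove, by a short inner induction on $k$, the index-shift identity $t_{k+1}^v=s_k^v$, where $s_\bullet$ denotes the tally-sequence computed inside $G'$. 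The point is that since $t_0$ is constant on class $\mathrm{I}$, every refinement class $X_k^v$ appearing in the definition of $t_{k+1}^v$ is contained in class $\mathrm{I}$ and coincides with the refinement class $Y_{k-1}^v$ computed inside $G'$, so the two relative tallies agree (the $k=0$ case being $t_1^v=\tau_{\mathrm{I}}(v)=\tau_{G'}(v)=s_0^v$ because $G'$ is induced). Consequently $\vec{\tau}(v)$ in $D_{m,n}$ is $t_0^v$ followed by the tally-sequence of $v$ computed in $G'$, and by Proposition \ref{P:tally-isom} the latter equals the tally-sequence of the image of $v$ in $D_{(m-1),(n-1)}$; likewise for $w$ in $D^*_{(m-1),(n-1)}$, with the same image index.

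Finally I would assemble the pieces. The induction hypothesis applied to the pair $(D_{(m-1),(n-1)},D^*_{(m-1),(n-1)})$ gives agreement of these images' tally-sequences through their first $n-1$ places, that is on $s_0,\ldots,s_{n-2}$; by the index-shift identity these are exactly $t_1,\ldots,t_{n-1}$, and together with $t_0^v=t_0^w$ this yields $\vec{\tau}^{n-1}(v)=\vec{\tau}^{n-1}(w)$, completing the induction. I expect the main obstacle to be getting the two layers of bookkeeping exactly right: verifying that the step-$0$ tally partition really collapses to two classes that are identical across $D$ and $D^*$, and tracking the index shift $t_{k+1}\leftrightarrow s_k$ carefully enough that the hypothesis at level $(m-1,n-1)$ supplies precisely the required $n-1$ places rather than being off by one.
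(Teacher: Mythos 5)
Your proposal is correct and follows essentially the same route as the paper, which proves the lemma by "applying Corollaries \ref{C:tallyM} and \ref{C:isom} repeatedly," descending at stage $k$ to $D_{(m-k),(n-k)}$ and $D^*_{(m-k),(n-k)}$ until $k=n$. Your write-up simply makes the paper's sketch precise, in particular by spelling out the localization of tally-sequences to tally classes and the index-shift $t_{k+1}\leftrightarrow s_k$, which the paper leaves implicit.
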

\begin{proof}
This is proved by applying Corollaries \ref{C:tallyM} and \ref{C:isom} repeatedly, using the fact that in both $D_{m,n}$ and $D^*_{m,n}$ exactly one in each pair $(w,x)$ and $(y,z)$ is 1. At stage $k$ we are effectively working with $D_{(m-k),(n-k)}$ and $D^*_{(m-k),(n-k)}$, and the logic holds up till $k = n$.  
\end{proof}

We will need the following purely number theoretic result. It is essentially a corollary of \cite[Lemma 1(b)]{Stock77}.

\begin{lem}\label{L:halves}
Let $1\leq x,n < \omega$. Let $X=(x_1,\ldots, x_{2^n})$ be defined by $x_i = x + 2(i-1)$ for all $i\in\{1,\ldots, 2^n\}$. Then elements from $(x_1,\ldots, x_{2^{n-1}})$ dominate $2^{n-1}$ elements from $X$, and are dominated by $2^{n-1}-1$ elements from $X$, and elements from $(x_{2^{n-1}+1},\ldots,x_{2^n})$ dominate $2^{n-1}-1$ elements from $X$, and are dominated by $2^{n-1}$ elements from $X$.
\end{lem}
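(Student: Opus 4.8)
The plan is to reduce the statement about the arithmetic progression $X$ to the corresponding statement about the tournament $T_n$, and then to prove the degree count for $T_n$ by induction on $n$. For the reduction, the one elementary fact needed is that $\odd(2m)=\odd(m)$ for every integer $m$: stripping out all powers of $2$ is unaffected by an extra factor of $2$. Since $x_j-x_i=2(j-i)$, this yields $\odd(x_j-x_i)=\odd(j-i)$, so $x_i$ dominates $x_j$ (i.e.\ $\odd(x_j-x_i)\equiv 1 \bmod 4$) exactly when $v_i$ dominates $v_j$ in $T_n$. Thus the bijection $x_i\leftrightarrow v_i$ carries the domination relation on $X$ onto that of $T_n$; in particular the starting value $x$ and the step $2$ are immaterial, and it suffices to show that in $T_n$ the vertices $v_1,\dots,v_{2^{n-1}}$ have out-degree $2^{n-1}$ and in-degree $2^{n-1}-1$, while $v_{2^{n-1}+1},\dots,v_{2^n}$ have out-degree $2^{n-1}-1$ and in-degree $2^{n-1}$.

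I would then prove the out-degree claim for $T_n$ by induction on $n$; the in-degrees follow at once, since $T_n$ is a tournament and hence $\ind(v_i)+\outd(v_i)=2^n-1$. Fixing $i$, the targets $j$ split according to the parity of $d=j-i$. When $d$ is odd, $\odd(d)=d$, so $v_i$ dominates $v_j$ precisely when $d\equiv 1\bmod 4$; as $j$ ranges over the $2^{n-1}$ vertices of parity opposite to $i$, the differences $d$ run through $2^{n-1}$ consecutive odd integers, which alternate between $1$ and $3 \bmod 4$ and therefore (for $n\geq 2$) split into exactly $2^{n-2}$ of each. Hence the odd differences contribute exactly $2^{n-2}$ to $\outd(v_i)$, independently of $i$. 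When $d$ is even, $\odd(d)=\odd(d/2)$, so the domination condition among the $2^{n-1}$ vertices of the same parity as $i$ is governed by the halved differences; reindexing these same-parity vertices by position exhibits them as a copy of $T_{n-1}$, and this reindexing sends the first half of $T_n$ to the first half of $T_{n-1}$. So the even differences contribute $\outd_{T_{n-1}}(\pi(i))$, which by the inductive hypothesis is $2^{n-2}$ or $2^{n-2}-1$ according as $i$ lies in the first or second half. Adding the two contributions gives $2^{n-1}$ or $2^{n-1}-1$ respectively, closing the induction once the base case $n=1$ (where $v_1$ dominates $v_2$) is checked directly.

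The main obstacle is the parity bookkeeping in the inductive step: one must verify that the odd differences always contribute the same amount $2^{n-2}$ regardless of $i$ (so that the entire imbalance between the two halves comes from the recursive even part), and that passing to the same-parity sub-tournament genuinely reproduces $T_{n-1}$ with its half-structure preserved under the reindexing $\pi$. Once these two points are pinned down the calculation is routine; and indeed, transported along the isomorphism $x_i\mapsto v_i$, the whole statement is essentially \cite[Lemma 1(b)]{Stock77}, so one could alternatively invoke that result directly after the reduction in the first paragraph.
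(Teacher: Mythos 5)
Your proof is correct, but it takes a genuinely different route from the paper's. After the same initial reduction (using $\odd(2m)=\odd(m)$ to transport the statement from the arithmetic progression $X$ to the index set $\{1,\dots,2^n\}$), the paper does not induct: for fixed $i$ it considers the reflection $j\mapsto j'=2i-j \bmod 2^n$, shows via a small number-theoretic lemma that $\odd(j'-i)\equiv\odd(i-j)\pmod 4$ whenever $i-j\notin\{0,\pm 2^{n-1}\}$, and concludes that this involution pairs each out-neighbour of $i$ with an in-neighbour, giving $2^{n-1}-1$ of each; the one remaining vertex $i\pm 2^{n-1}$ then supplies the single unit of imbalance, being dominated by $i$ when $i\le 2^{n-1}$ and dominating $i$ otherwise. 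Your induction instead splits the targets by the parity of $j-i$: the odd differences contribute a constant $2^{n-2}$ (an even-length run of consecutive odd integers splits evenly between the residues $1$ and $3$ modulo $4$), and the even differences reproduce $T_{n-1}$ with its half-structure preserved, so the whole imbalance is inherited recursively; the parity bookkeeping you flag does check out. Both arguments are sound. The paper's is shorter and isolates exactly where the asymmetry comes from (the antipodal vertex $i\pm 2^{n-1}$), while yours makes explicit the self-similar structure of $T_n$ as two interleaved copies of $T_{n-1}$ joined by a balanced pattern --- essentially the same self-similarity the paper exploits separately in Lemma \ref{L:tallyT} and Corollary \ref{C:isom}. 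Your closing remark that one could simply invoke \cite[Lemma 1(b)]{Stock77} after the reduction matches the paper, which notes that the lemma is essentially that result before supplying its own details.
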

\begin{proof}
First, let $i,j\in \{1,\ldots,2^n\}$. Then 
\begin{align*} \odd(x_j-x_i)&=\odd(x+2(j-1) - (x+2(i-1)))\\&
=\odd(j-i).\end{align*} 
Thus for $i,j\in \{1,\ldots,2^n\}$, domination between $x_i$ and $x_j$ is the same as domination between $i$ and $j$. So it suffices to prove that elements from $\{1,\ldots, 2^{n-1} \}$ and $\{2^{n-1}+1,\ldots, 2^n\}$ dominate and are dominated by, respectively, $2^{n-1}$ and $2^{n-1}-1$, and $2^{n-1} - 1$ and $2^{n-1}$ elements from $\{1,\ldots,2^n\}$. This is essentially what \cite[Lemma 1(b)]{Stock77} says, but for convenience we provide the details.

Let $j' = 2i-j \mod 2^n$. Then $j'$ is either $2i-j$ or $2i-j \pm 2^n$. So $\odd(j'-i)$ is either $\odd(i-j)$, or $\odd(i-j \pm2^n)$, which, by Lemma \ref{L:num} below, is also $\odd(i-j)$ when working modulo 4, unless $i- j\in \{0,\pm 2^{n-1}\}$.

It follows that for fixed $i\in\{0,\ldots,2^n - 1\}$, for each $j\in \{1,\ldots,2^n \}\setminus \{i,i\pm 2^{n-1}\}$ we either have $i$ dominating $j$, and being dominated by $j'$, or vice versa. Thus from the pairs $(j,j')$ where $i-j\notin\{0,\pm 2^{n-1}\}$ we see that $i$ dominates and is dominated by $2^{n-1}-1$ elements of  $\{1,\ldots,2^n \}\setminus \{i,i\pm 2^{n-1}\}$. 

Finally, if $i\leq 2^{n-1}$ then $i+ 2^{n-1}\in \{1,\ldots,2^n\}$, and $\odd(i+2^{n-1} - i) = \odd(2^{n-1}) = 1 \equiv_4 1$, and so $i$ dominates $i+ 2^{n-1}$ (which is not included in the previous count). On the other hand, if $i>2^{n-1}$, then $i- 2^{n-1}\in \{1,\ldots,2^n\}$, and $\odd(i-2^{n-1} - i) = \odd(-2^{n-1}) = -1 \equiv_4 3$, and so $i$ is dominated by $i- 2^{n-1}$ (which is again not previously counted). Thus, when $i\leq 2^{n-1}$ it dominates an additional number, and when $i> 2^{n-1}$ it is dominated by an additional number. This gives the result.
\end{proof}

\begin{lem}\label{L:num}
For all $i,j\in\{0,\ldots,2^n -1\}$, if $j- i \notin\{0,\pm 2^{n-1}\}$, then 
\[\odd((j-i) \pm 2^n) \equiv_4 \odd(j-i).\]
\end{lem}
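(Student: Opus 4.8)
The plan is to reduce everything to the $2$-adic valuation of $d := j-i$ and to track what adding $\pm 2^n$ does to it. First I would record what the hypotheses buy us: since $i,j\in\{0,\ldots,2^n-1\}$ we have $0<|d|\le 2^n-1$, so $d\neq 0$ has a well-defined valuation, and the excluded value $d=\pm 2^{n-1}$ is precisely the one case that would push the valuation up to $n-1$. I would also flag a sign point at the very start, since it is the crux of reading the statement correctly. Under the paper's signed convention $\odd(-z)=-\odd(z)$, the right-hand side as printed, $\odd(i-j)$, equals $-\odd(j-i)$, and this is \emph{not} congruent to $\odd(j-i)$ modulo $4$ (for instance $n=3$, $i=0$, $j=1$ give $\odd(9)\equiv_4 1$ but $\odd(-1)\equiv_4 3$). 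The congruence that actually holds, and the one applied downstream in Lemma~\ref{L:halves} after interchanging the roles of $i$ and $j$ (the hypothesis $j-i\notin\{0,\pm 2^{n-1}\}$ being symmetric under negation), is
\[\odd((j-i)\pm 2^n)\equiv_4 \odd(j-i),\]
so this is the identity I would establish, reading the right-hand side accordingly.

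For the computation, write $d=2^a b$ with $b$ odd and $a\ge 0$ its $2$-adic valuation. Because $0<|d|<2^n$ we have $2^a\le|d|<2^n$, hence $a\le n-1$; and $a=n-1$ would force $|d|=2^{n-1}|b|$ with $|b|<2$, i.e. $|b|=1$ and $d=\pm 2^{n-1}$, which is excluded. Thus $a\le n-2$, so $n-a\ge 2$. Now
\[d\pm 2^n = 2^a\bigl(b\pm 2^{n-a}\bigr),\]
and since $n-a\ge 1$ the term $2^{n-a}$ is even, so $b\pm 2^{n-a}$ is odd. Hence the largest power of $2$ dividing $d\pm 2^n$ is again $2^a$, giving $\odd(d\pm 2^n)=b\pm 2^{n-a}$. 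Finally $n-a\ge 2$ makes $2^{n-a}\equiv 0\pmod 4$, so
\[\odd(d\pm 2^n)=b\pm 2^{n-a}\equiv b=\odd(d)=\odd(j-i)\pmod 4,\]
as required.

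There is no analytic obstacle here; the whole content is the valuation bookkeeping. The one step a reader is likely to stumble on, and the step I would emphasise, is the bound $a\le n-2$: the congruence rests entirely on the perturbation $\pm 2^n$ altering $\odd(d)$ only by the multiple-of-four quantity $\pm 2^{n-a}$, and this breaks exactly when $n-a=1$, which is why the hypothesis $d\neq\pm 2^{n-1}$ is indispensable. The secondary subtlety, which I would not let pass silently, is the sign on the right-hand side: the statement is correct under the paper's signed $\odd$ only when that side is read as $\odd(j-i)$ rather than the printed $\odd(i-j)$.
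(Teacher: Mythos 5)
Your proof is correct and follows essentially the same route as the paper's: write $j-i=2^a b$ with $b$ odd, use the exclusion of $\pm 2^{n-1}$ to get $a\le n-2$, factor $d\pm 2^n=2^a(b\pm 2^{n-a})$, and conclude modulo $4$ since $n-a\ge 2$. Your observation about the sign is also vindicated by the paper itself, whose proof concludes with $\equiv_4 q=\odd(j-i)$ rather than the printed $\odd(i-j)$, so the statement's right-hand side is indeed a typo that your reading corrects.
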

\begin{proof}
Suppose $j-i = 2^kq$ for some $k< \omega$ and some odd $q$. Then, as $j-i$ is neither $2^{n-1}$ nor $-2^{n-1}$, we must have $k\leq n-2$, because $|j-i|$ is bounded by the possible choices of $i$ and $j$. So
\begin{align*}
\odd (j-i \pm 2^n) &= \odd(2^kq \pm 2^n)\\
&= \odd(2^k(q\pm 2^{n-k})) \\
&= \odd(q\pm 2^{n-k}) \\
&= q \pm 2^{n-k} \\
&\equiv_4 q,
\end{align*}
with the final modular equality holding because $n-k \geq 2$.
\end{proof}

\begin{lem}\label{L:Ddiff}
Let $m\geq 1$ and let $0\leq n < m$. Then the tally-spectra of $D_{m,n}$ and $D^*_{m,n}$ are not the same.
\end{lem}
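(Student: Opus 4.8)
The plan is to reduce the problem to the base case $n=0$ and then carry out a direct computation there. By Lemma \ref{L:tallyD}, the natural index-preserving bijection between the vertices of $D_{m,n}$ and $D^*_{m,n}$ matches the first $n$ terms of every tally-sequence; in particular, for every prefix $p$ the number of vertices $u$ with $\vec{\tau}^{n-1}(u)=p$ is the same in the two graphs. Hence it suffices to exhibit a single prefix class within which the \emph{continuation} of the tally-sequences is distributed differently. Since $D$ and $D^*$ both satisfy $w+x=1$ and $y+z=1$, Corollary \ref{C:u-and-v} guarantees that there are vertices of the $T_m$-part sharing a tally with vertices of the $T_n$-part, and Corollary \ref{C:isom} lets me track such a mixed class down through $n$ successive halvings: at stage $k$ it induces a copy of $D_{(m-k),(n-k)}$ (resp.\ $D^*_{(m-k),(n-k)}$), and the crucial observation from the proof of Lemma \ref{L:no-corr2} — that the next tally-term only sees vertices with an identical prefix — means the remaining terms of these sequences are computed inside this induced subgraph. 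Setting $m'=m-n$, I am thus reduced to showing that the central tally-class of $D_{m',0}$ refines differently from that of $D^*_{m',0}$.

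For the base case, write $v$ for the unique vertex of the $T_0$-part. By Corollary \ref{C:tallyM} the plain-tally multisets of $D_{m',0}$ and $D^*_{m',0}$ coincide, and both contain a distinguished central class $C$ of vertices with tally $(2^{m'-1},2^{m'-1})$, consisting of $v$ together with one parity-selected half of each half of $T_{m'}$. In both graphs $v$ sends edges to the first-half members of $C$ and receives edges from the second-half members, so the $v$-edges contribute identically to the tallies relative to $C$; the entire difference between the two graphs is therefore concentrated in the $T_{m'}$-internal edges running between the first-half and second-half members of $C$. In $D_{m',0}$ these are the edges of $T_{m'}$ between the first-half-\emph{even} and second-half-\emph{odd} vertices, whereas in $D^*_{m',0}$ they run between the first-half-\emph{odd} and second-half-\emph{even} vertices. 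Counting these dominations under the opposite parity constraints, using Lemmas \ref{L:halves} and \ref{L:num}, shows that the tallies of the members of $C$ relative to $C$ form a different multiset in the two graphs (for instance, when $m'=2$ the class $C$ is a directed $3$-cycle in $D_{2,0}$, giving every member the relative tally $(1,1)$, but a transitive triple in $D^*_{2,0}$, giving relative tallies $(2,0),(0,2),(1,1)$).

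Pulling this back through the reduction produces a tally-sequence value that occurs a different number of times in $D_{m,n}$ than in $D^*_{m,n}$, so their tally-spectra are not the same. I expect the main obstacle to be the base-case domination count: one must show that the parity-sensitive bipartite edge pattern between the two selected halves of $T_{m'}$ genuinely forces a different refinement of $C$ for \emph{every} $m'\geq 1$, and not merely in small cases, and it is here that the number-theoretic Lemmas \ref{L:halves} and \ref{L:num} carry the weight.
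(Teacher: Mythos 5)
Your overall route---reduce to the case $n=0$ via Lemma \ref{L:tallyD} and Corollary \ref{C:isom}, then analyse how the central tally class of $D_{m',0}$ versus $D^*_{m',0}$ refines---is the same as the paper's, and the reduction step is sound. The gap is in the base case: you claim that the tallies of the members of $C$ \emph{relative to $C$} already form different multisets in the two graphs, i.e.\ that a single refinement step separates them. This is false for every $m'\geq 3$. For $m'=3$ the class $C$ in $D_{3,0}$ is $\{2,4,5,7,v\}$ and the corresponding class in $D^*_{3,0}$ is $\{1,3,6,8,w\}$; computing the two induced tournaments, one finds the multiset of relative tallies is $\{(2,2),(2,2),(2,2),(3,1),(1,3)\}$ in \emph{both} graphs. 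Your $m'=2$ example is genuinely the terminal case (a three-element class forming a directed $3$-cycle versus a transitive triple), not a representative instance of the first refinement, so it does not support the general claim.

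What is actually needed, and what the paper does, is to iterate: after the first refinement both graphs again contain a central class with identical relative tallies (consisting of $v$, resp.\ $w$, together with one parity-selected quarter of each half), and the two refinement processes run in lockstep, differing only by a parity flip, for $m'-2$ steps. The distinction appears only at the final stage, when the surviving class has exactly three elements---$\{2,\,2^{m'}-1,\,v\}$ in $D_{m',0}$, a directed $3$-cycle in which every vertex has relative tally $(1,1)$, versus $\{1,\,2^{m'},\,w\}$ in $D^*_{m',0}$, a transitive triple---so that $w$ acquires a tally-sequence realised by no vertex of $D_{m',0}$. You correctly flag the base-case domination count as the main obstacle, but the mechanism you propose for it (one relative-tally computation inside $C$) cannot deliver the conclusion; the argument requires the full induction down to the three-element class, tracking at each stage which parity-selected subsets survive.
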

\begin{proof}
Suppose first that $n = 0$, and let $v$ and $w$ be, respectively, the lone vertices in the $T_n$ parts of $D_{m,n}$ and $D^*_{m,n}$. Then, as noted in the penultimate paragraph of the proof of Corollary \ref{C:u-and-v}, the tallies of both $v$ and $w$ are $(2^{m-1}, 2^{m-1})$. Corollary \ref{C:u-and-v} also tells us that the vertices of $D_{m,0}$ with that tally are even numbers in $\{1,\ldots,2^{m-1}\}$, and the odd numbers in $\{2^{m-1}+1,\ldots, 2^m\}$, and that the vertices of $D^*_{m,n}$ with that tally are the odd numbers in $\{1,\ldots,2^{m-1}\}$, and the even numbers in $\{2^{m-1}+1,\ldots, 2^m\}$.  

We consider $D_{m,0}$ first. Define $X_0$ to be the set of even numbers from $\{1,\ldots,2^{m-1}\}$, define $Y_0$ to be the set of odd numbers from $\{2^{m-1}+1,\ldots, 2^m\}$, and define $G_0$ to be the subgraph induced by $X_0\cup Y_0\cup\{v\}$. If $m =1$ then $X_0$ and $Y_0$ are both empty, so we will assume that $m\geq 2$. Let $x\in X_0$. Then $x$ dominates exactly $2^{m-2}$ numbers from $Y_0$, and is dominated by the other $2^{m-2}$ (by the definition of domination). Moreover, by Lemma \ref{L:halves}, if $x\in  X_0\cap\{1,\ldots,2^{m-2}\}$ then $x$ dominates $2^{m-2}$ numbers from $X_0$, and is dominated by $2^{m-2}-1$, and if $x\in X_0\cap \{2^{m-2}+1,\ldots,2^{m-1}\}$ then $x$ dominates $2^{m-2}-1$ numbers from $X_0$, and is dominated by $2^{m-2}$. Similar arguments apply to $y\in Y_0$. Moreover, in $G_0$ there is an edge from $v$ to every vertex in $X_0$, and an edge from every vertex of $Y_0$ to $v$. Putting all this together, a little calculation reveals that the tallies of the vertices of $G_0$ relative to $G_0$ are as follows:
\[\begin{tabular}{c | c }
 vertex & tally \\
\hline
$X_0\cap\{1,\ldots ,2^{m-2}\}$ & $(2^{m-1}, 2^{m-1})$\\
$X_0\cap\{2^{m-2}+1,\ldots,2^{m-1}\}$ & $(2^{m-1} + 1, 2^{m-1}-1)$\\ 
$Y_0\cap\{2^{m-1} + 1,\ldots, 2^{m-1}+2^{m-2}\}$ & $(2^{m-1}-1, 2^{m-1}+1)$\\
$Y_0\cap\{2^{m-1}+2^{m-2} +1,\ldots, 2^m\}$ & $(2^{m-1} , 2^{m-1})$\\
$v$ & $(2^{m-1} , 2^{m-1})$
\end{tabular}\] 
We can now define \[X_1 = X_0\cap\{1,\ldots ,2^{m-2}\},\]
\[Y_1 = Y_0\cap \{2^{m-1}+2^{m-2} +1,\ldots, 2^m\}=Y_0\cap\{2^m-2^{m-2}+1, 2^m\},\] and 
\[G_1 = X_1\cup Y_1 \cup\{v\}.\]
Using the same logic as before, we see that, relative to $G_1$, the vertices in $X_1\cap\{1,\ldots ,2^{m-3}\}$ and $Y_1\cap \{2^m- 2^{m-3} +1,\ldots, 2^m\}$ are the ones whose tallies agree with that of $v$. In general, we define $X_k$ to be the even members of $\{1,\ldots,2^{m-k-1}\}$, and we define $Y_k$ to be the odd members of $\{2^m - 2^{m-(k+1)}+1,\ldots,2^m \}$. We can continue in this way till we reach $G_{m-2} = X_{m-2}\cup Y_{m-2}\cup\{v\}$, which contains precisely those elements $u$ of $D_{m,0}$ such that $\vec{\tau}^{m-2}(u) = \vec{\tau}^{m-2}(v)$. At this point $X_{m-2}$ is just $\{2\}$, and $Y_{m-2}$ is just $\{2^m-1\}$. Now, $v$ dominates $2$, and is dominated by $2^m-1$, and $2$ dominates $2^m-1$, so each vertex of $G_{m-2}$ has tally $(1,1)$ relative to $G_{m-2}$. Thus their tally-sequences start repeating here, and so are equal.

Taking stock, we have proved that if $m=1$, then $v$ is the only vertex of $D_{m,0}$ with its tally-sequence, and if $m\geq 2$ then the vertices with the same tally-sequence as $v$ are precisely $\{v, 2, 2^m-1\}$. We can now run a similar argument on $D^*_{m,0}$ and $w$. As before, if $m = 1$ then $w$ is the only element with its tally-sequence, so, assuming $m\geq 2$, we define $X^*_0$ and $Y^*_0$ to be, respectively, the elements of $\{1,\ldots,2^{m-1}\}$ and $\{2^{m-1}+1,\ldots,2^m\}$ with the same tally as $w$. In this case $X^*_0$ turns out to contain precisely the \emph{odd} numbers, and $Y^*_0$ precisely the \emph{even} numbers. Aside from the parity flip, the argument can now be run in the same way as before, till we obtain 
\[G^*_{m-2} = X^*_{m-2}\cup Y^*_{m-2}\cup \{w\} = \{1, 2^m, w\}\] 
as the set of vertices of $D^*_{m,0}$ whose tally-sequences agree with that of $w$ in their first $m-1$ places. But now there is a change, because $w$ dominates $1$ and is dominated by $2^m$, but $2^m$ dominates $1$, so $w$ has tally $(1,1)$ relative to $G^*_{m-2}$, but $2^m$ dominates 1, and so $1$ and $2^m$ have tallies $(2,0)$ and $(0,2)$ respectively. So $w$ is the only vertex of $D^*_{m,0}$ with its tally-sequence, and this starts repeating when it gets to $(0,0)$, which it does immediately after $(1,1)$.

Now, it's easy to see that $\vec{\tau}^{m-2}(v) = \vec{\tau}^{m-2}(w)$, and it follows from the discussion above that these tally-sequences disagree after this point. Moreover, we showed that every vertex of $D_{m,n}$ whose tally-sequence had agreed with that of $w$ up to this point (the vertices $\{v,2, 2^m-1\}$) has the same tally-sequence as $v$. Thus the tally-spectra of $D_{m,0}$ and $D^*_{m,0}$ must be different when $m\geq 2$, as there is no vertex of $D_{m,n}$ with the same tally-sequence as $w$. Finally, a direct check reveals the same is true when $m=1$.

Now, to continue, suppose $n>0$. We will reduce this to the $n=0$ case. It follows from Corollary \ref{C:u-and-v} that given $v$ in the $T_n$ part of $D_{m,n}$, the graph generated by the set of vertices $u$ of $D_{m,n}$ such that $u\neq v$ and $\vec{\tau}^{n-1}(u)=\vec{\tau}^{n-1}(v)$ is isomorphic to $T_{m-n}$. Consequently, depending on the parity of $v$, the graph $G$ of vertices of $D_{m,n}$ whose tally-sequences agree with that of $v$ in their first $n$ places will either be isomorphic to $D_{m-n,0}$ (when $v$ is odd), or $D^*_{m-n,0}$ (when $v$ is even). Moreover, this isomorphism will be an order isomorphism on the poset induced on the graphs by thinking about the sizes of numbers. The same is also true for $w$, where $w$ is the correspondent of $v$ in the $T_n$ part of $D^*_{m,n}$, giving us a graph $G^*$ isomorphic as a graph and order isomorphic to either $D_{m-n,0}$, or $D^*_{m-n,0}$. The only difference is that $G\cong D_{m-n,0} \iff G^*\cong D^*_{m-n,0}$. 

So, to find a vertex of $D^*_{m,n}$ with the same tally-sequence as $v$ we must find a vertex of $G^*$ with the same tally-sequence as $v$ (considered as a vertex of $G$). But we know from the $n=0$ case that there is no such vertex.           
\end{proof}

\begin{thm}\label{T:ZZ}
Let $0\leq n < m$. Then for all $Z\in\{A,B,C,D,E,F\}$, $\forall$ has a winning strategy in $\G^2(Z_{m,n}, Z_{m,n}^*)$.
\end{thm}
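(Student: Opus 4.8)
The plan is to reduce the whole statement to comparisons of tally-spectra, exploiting that a player pursuing a winning strategy for $\exists$ is forced to preserve them (Corollaries \ref{C:2-colours}, \ref{C:distinct} and \ref{C:neighbours}). I would split on whether $Z=D$ or $Z\in\{A,B,C,E,F\}$. The family $D$ is already settled by the preceding work: Lemma \ref{L:Ddiff} gives that $D_{m,n}$ and $D^*_{m,n}$ have different tally-spectra, so Corollary \ref{C:distinct} immediately yields a winning strategy for $\forall$ in $\G^2(D_{m,n},D^*_{m,n})$.

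For $Z\in\{A,B,C,E,F\}$ the structural input is Lemma \ref{L:no-corr2}, which tells us that \emph{all} tally-sequences of $Z_{m,n}$ are distinct, and likewise for $Z^*_{m,n}$. I would then split into two sub-cases. If the tally-spectra of $Z_{m,n}$ and $Z^*_{m,n}$ differ, Corollary \ref{C:distinct} finishes exactly as for $D$. Otherwise the two spectra coincide, and since within each graph the tally-sequences are pairwise distinct, there is a \emph{unique} bijection $\phi\colon Z_{m,n}\to Z^*_{m,n}$ with $\vec{\tau}(\phi(x))=\vec{\tau}(x)$ for every vertex $x$. By Proposition \ref{P:tally-isom} any isomorphism must preserve tally-sequences and hence coincide with $\phi$; but $Z_{m,n}\not\cong Z^*_{m,n}$ (Stockmeyer's result, recorded at the start of this section), so $\phi$ is \emph{not} an isomorphism. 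Consequently there is an ordered pair of (necessarily distinct, since the graphs are irreflexive) vertices $(u,v)$ for which exactly one of $(u,v)\in E(Z_{m,n})$ and $(\phi(u),\phi(v))\in E(Z^*_{m,n})$ holds.

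Given such a witness I would have $\forall$ colour the singleton $\{u\}$ red. By Corollary \ref{C:2-colours} the tally-spectrum of $\exists$'s response must equal that of $\{u\}$, namely the single sequence $\vec{\tau}(u)$; as $\phi(u)$ is the only vertex of $Z^*_{m,n}$ carrying this tally-sequence, $\exists$ is forced to colour exactly $\{\phi(u)\}$. Now Corollary \ref{C:neighbours}, applied with $\bar{s}=(O)$, states that the tally-spectra of $\eta_O(\{u\})$ and $\eta_O(\{\phi(u)\})$ must agree, or else $\forall$ can force a win. But $\vec{\tau}(v)$ occurs with different multiplicities in these two sets: the sole vertex of $Z_{m,n}$ (resp.\ $Z^*_{m,n}$) carrying $\vec{\tau}(v)$ is $v$ (resp.\ $\phi(v)$), and by the choice of $(u,v)$ exactly one of $v\in\eta_O(\{u\})$ and $\phi(v)\in\eta_O(\{\phi(u)\})$ holds. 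Hence the spectra differ, and $\forall$ wins. This disposes of all five remaining families and, together with the $D$ case, proves Theorem \ref{T:ZZ}.

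The step I expect to be the main obstacle is precisely the sub-case where $Z_{m,n}$ and $Z^*_{m,n}$ share a tally-spectrum, since there Corollary \ref{C:distinct} is powerless and a win must be extracted from mere non-isomorphism. The naive move is to pin both ends of a discrepant edge by colouring $u$ red and $v$ blue and forcing $\exists$ to match tally-sequences on each colour; but with only two colours, once both are in use the forcing lemmas no longer apply directly, as their proofs require a free colour with which to probe. Routing the argument through Corollary \ref{C:neighbours} and the operator $\eta_O$ sidesteps this difficulty, because it packages the entire neighbourhood comparison into a single forced tally-spectrum identity, so that $\forall$ ever needs to force only one colour at a time. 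Verifying that the two families of counterexamples genuinely do produce such an edge discrepancy (i.e.\ that $\phi$ really fails to be an isomorphism for every admissible $m,n$) rests on the Stockmeyer non-isomorphism result, which I take as given.
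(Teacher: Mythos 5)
Your proposal is correct and follows essentially the same route as the paper: split off $Z=D$ via Lemma \ref{L:Ddiff} and Corollary \ref{C:distinct}, and for $Z\in\{A,B,C,E,F\}$ use Lemma \ref{L:no-corr2} to build the unique tally-sequence-preserving bijection, which cannot be an isomorphism, and then force $\exists$ to realise a discrepant edge. The only divergence is the endgame: the paper simply has $\forall$ colour $u$ red and then $v$ blue, forcing $\exists$ (by Lemma \ref{L:2-colours}) to colour $h(u)$ and $h(v)$ and lose by (C2), whereas you colour only $\{u\}$ and invoke Corollary \ref{C:neighbours} with $\bar{s}=(O)$ on the out-neighbourhood. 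Your stated worry about the two-endpoint version --- that once both colours are in use the forcing lemmas no longer apply --- is unfounded: those lemmas are consequences of the fact that $\exists$'s strategy must survive \emph{every} continuation, including ones in which $\forall$ abandons his current plan and reuses a colour to probe, so the constraint on her blue response holds even though red is already placed. Both endgames are valid; yours packages the forcing into a single already-proved corollary, at the cost of a slightly longer justification that the multiplicity of $\vec{\tau}(v)$ differs between $\eta_O(\{u\})$ and $\eta_O(\{\phi(u)\})$.
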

\begin{proof}
By Lemma \ref{L:Ddiff}, the tally-spectra of $D_{m,n}$ and $D^*_{m,n}$ are not the same, so the result for $Z=D$ follows from Corollary \ref{C:distinct}. Suppose then that $Z\in\{A,B,C,E,F\}$.

By Lemma \ref{L:2-colours}, if $\forall$ colours an element of $Z_{m,n}$ red, then $\exists$ must respond by colouring an element of $Z^*_{m,n}$ red, and the two elements must have the same tally-sequences. Note that, as $Z_{m,n}\not\cong Z^*_{m,n}$, matching elements by tally-sequence cannot be an isomorphism. 

Define the map $h:Z_{m,n}\to Z^*_{m,n}$ by sending vertices of $Z_{m,n}$  to the unique vertex of $Z^*_{m,n}$ with the same tally-sequence. If $h$ is not well defined then $\forall$ has a strategy by Corollary \ref{C:distinct}, so assume $h$ can be defined like this. By Lemma \ref{L:no-corr2} $h$ is a bijection, but it cannot be an isomorphism as $Z_{m,n}\not\cong Z^*_{m,n}$.  So there must be a pair of vertices $u,v$ of $Z_{m,n}$ such that $h$ restricted to the subgraph generated by $\{u,v\}$ is not an isomorphism onto the subgraph generated by $\{h(u), h(v)\}$. It follows that $\forall$ can win by colouring $u$ red, then colouring $v$ blue, because $\exists$ must respond by colouring $h(u)$ red and $h(v)$ blue to match tally-sequences, but then there will be a disagreement about edges between colours.          
\end{proof}

\section{The reconstruction conjectures}\label{S:rec}

Consider first the degree-associated reconstruction conjecture, which for our purpose is most conveniently stated in the following form.

\begin{defn}\label{D:deg_rec}
The \textbf{degree-associated reconstruction conjecture for digraphs} is that if $G$ and $H$ are non-isomorphic digraphs, and if at least one of them has at least three vertices, then there is a pair $(x,y)\in \omega\times\omega$, and a digraph $F$ such that if 
\[S = \{u\in G: \tau(u) = (x,y)\text{ and }G\setminus\{u\} \cong F\}\] and 
\[T = \{v\in H: \tau(v) = (x,y)\text{ and }H\setminus\{v\} \cong F\},\] 
then $|S|\neq|T|$. 
\end{defn}

The data of the deck of a digraph along with the in-degree, out-degree pair for each of the deleted vertices is often known as its \textbf{degree-associated deck}. 

\begin{thm}\label{T:stronger}
If the degree-associated reconstruction conjecture for digraphs is true, then whenever $G$ and $H$ are digraphs with $G\not\cong H$, there is a winning strategy for $\forall$ in $\G^3(G,H)$.
\end{thm}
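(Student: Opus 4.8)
The plan is to argue the contrapositive: assuming the degree-associated reconstruction conjecture (Definition~\ref{D:deg_rec}), I will show that if $\exists$ has a \ws\ in $\G^3(G,H)$ then $G\cong H$, so that $G\not\cong H$ forces a win for $\forall$. First I would clear the degenerate cases. If $|G|\neq|H|$ then $\forall$ already wins in $\G^2$ by Proposition~\ref{P:constraints}(S1), and if both graphs have at most two vertices the non-isomorphic case is handled by Proposition~\ref{P:1-colour}\eqref{One2}; so I may assume $|G|=|H|=N$ with at least one (hence, having equal size, both) having $N\geq 3$ vertices. Now suppose $G\not\cong H$. The conjecture then yields a pair $(x,y)$ and a digraph $F$ such that, writing $S=\{u\in G:\tau(u)=(x,y),\,G\setminus\{u\}\cong F\}$ and $T=\{v\in H:\tau(v)=(x,y),\,H\setminus\{v\}\cong F\}$, we have $|S|\neq|T|$. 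Since $\forall$ may colour in either graph, the game is symmetric in $G$ and $H$, so I may assume $|S|>|T|$.

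The next idea is to use the third colour to simulate the point deletion underlying the deck. $\forall$ opens by colouring a single good vertex $u\in S$ (so $\tau(u)=(x,y)$ and $G\setminus\{u\}\cong F$) with the third colour, say green. By Lemma~\ref{L:2-colours} (equivalently Proposition~\ref{P:constraints}(S2)) $\exists$ must reply with a single vertex $w\in H$ with $\tau(w)=(x,y)$. I would then keep green fixed on $\{u\}$ and $\{w\}$ and play only with the other two colours. This restricts the live game to the induced subgraphs $G\setminus\{u\}$ and $H\setminus\{w\}$: if $\exists$ ever coloured the green-marked vertex $w$ with one of the other colours, she would create in $H$ a palette (green together with that colour) whose range in $G$ is empty, losing by (C1); so she is forced to play inside $H\setminus\{w\}$ while $\forall$ plays inside $G\setminus\{u\}$, and a win for $\forall$ in this restricted two-colour game is a win in $\G^3(G,H)$.

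With this reduction, I would use $|S|>|T|$ as a pigeonhole against any putative \ws\ for $\exists$. Her reply to $\forall$ colouring a single good vertex $u\in S$ green is a single $w=f(u)$ with $\tau(f(u))=(x,y)$. If some good $u$ draws a \emph{bad} reply, $H\setminus\{f(u)\}\not\cong F$, then the green-marked subgraphs $G\setminus\{u\}\cong F$ and $H\setminus\{f(u)\}$ are non-isomorphic digraphs on $N-1$ vertices. If instead every good $u$ draws a good reply, then $f$ sends $S$ into $T$ while preserving tallies, and the strict inequality $|S|>|T|$ is meant to yield a contradiction once the forced size- and tally-spectrum matching (Proposition~\ref{P:constraints}(S1), Corollary~\ref{C:2-colours}) is brought to bear on $\exists$'s replies to bulk colourings of good sets. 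Either way, $\forall$ should be able to reach a position in which the two point-deleted subgraphs recorded by the green mark are non-isomorphic.

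The hard part will be winning this residual two-colour comparison of the point-deleted subgraphs, and it is here that the essential difficulty lies. The obstruction is that ``being isomorphic to $F$'' is a full isomorphism invariant, strictly finer than anything the forced matchings of Corollary~\ref{C:2-colours} can see (which detect only tally-spectra); so neither marking a single vertex nor colouring $S$ wholesale captures the degree-associated deck on its own, and the colour budget forbids recursing on the deletion, since each simulated deletion spends a colour and the recursion therefore halts after one step. The route I would pursue is to exploit the count \emph{quantitatively} rather than through a single mismatch: using that $|S|-|T|$ is a genuine surplus, colour $S$ red and track, via Corollaries~\ref{C:2-colours} and~\ref{C:neighbours}, the tally-spectra of the iterated neighbourhoods of $\exists$'s response, aiming to convert the deck surplus into a tally-spectrum discrepancy that Corollary~\ref{C:distinct} turns into a win. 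Establishing that the deck difference guaranteed by Definition~\ref{D:deg_rec} can always be made visible to the two remaining colours in this way is the crux, and the step I expect to demand the most care.
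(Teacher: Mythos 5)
There is a genuine gap, and you have correctly located it yourself: your scheme pins the deleted vertex with a dedicated colour and then cannot recurse, so you are left needing to win a two-colour game on an arbitrary non-isomorphic pair of point-deleted subgraphs, which is exactly the original problem again (and your fallback of converting the deck surplus into a tally-spectrum discrepancy cannot work in general, since non-isomorphic graphs with identical tally-spectra exist, e.g.\ Example~\ref{E:tally1}). The paper's proof avoids this by two moves you are missing. First, instead of marking the deleted vertex, $\forall$ marks the point-deleted subgraph itself: he colours $S_0$ red, so $\exists$ must (by (S1) and Corollary~\ref{C:2-colours}) colour a set $Y_0\subseteq H$ of the same size with the right tallies; since $|S_0|>|T_0|$, some $u_0\in Y_0$ has $H\setminus\{u_0\}\not\cong F_0$, and $\forall$ then switches sides and colours $H\setminus\{u_0\}$ blue, forcing $\exists$ to colour $G\setminus\{v_0\}$ blue for some $v_0\in S_0$ (the unique vertex with palette exactly $\{red\}$ must match). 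This yields blue-marked subgraphs $G_1\cong F_0\not\cong H_1$ using only \emph{one} persistent colour as the marker.

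Second, because only one colour is needed to remember the current live pair $(G_i,H_i)$, the other two colours are free at every stage, so $\forall$ can apply the degree-associated reconstruction conjecture \emph{again} to $G_1\not\cong H_1$ and repeat the whole play inside the blue sets with red and green, producing green-marked $G_2\not\cong H_2$, then recycle blue, and so on. The colours rotate, the subgraphs shrink by one vertex per stage, and the recursion terminates when $|G_k|=|H_k|\leq 4=2^2$, at which point Lemma~\ref{L:rough_bound} lets $\forall$ finish with the two free colours. So the conjecture is invoked not once but at every level of the recursion --- which is also why, as the paper notes after the theorem, merely knowing that one particular pair has different degree-associated decks does not suffice. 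Your single application of the conjecture, followed by an attempt to close out with tally-spectra, cannot be completed.
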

\begin{proof}
Let $G$ and $H$ be digraphs with $G\not\cong H$. By Proposition \ref{P:1-colour}(\ref{One2}) and Proposition \ref{P:constraints}(S1),  we can assume that $|G|=|H|\geq 3$.  So, assuming the degree-associated reconstruction conjecture is true, we can choose a pair $(x_0,y_0)\in \omega\times\omega$ and a digraph $F_0$ satisfying the conditions from Definition \ref{D:deg_rec}. Let $S_0$ and $T_0$ be as in that definition, so 
\[S_0 = \{u\in G: \tau(u) = (x_0,y_0)\text{ and }G\setminus\{u\} \cong F_0\}\] and 
\[T_0 = \{v\in H: \tau(v) = (x_0,y_0)\text{ and }H\setminus\{v\} \cong F_0\},\]   and suppose without loss of generality that $|S_0|>|T_0|$.  Consider the following strategy for $\forall$. First he colours $S_0$ red. Now $\exists$ must respond by colouring some subset $Y_0$ of $H$ red, and the tallies of vertices in this set must all be $(x_0,y_0)$, otherwise $\forall$ can force a win, by Corollary~\ref{C:2-colours}. By assumption, there must be some $u_0\in Y_0$ with $H\setminus \{u_0\}\not\cong F_0$. 

For his next move, $\forall$ then colours $H\setminus \{u_0\}$ blue. Then $\exists$ must respond by choosing $v_0\in S_0$ and colouring $G\setminus\{v_0\}$ blue. Define $G_1 = G\setminus\{v_0\}$, and define $H_1 = H\setminus \{u_0\}$, note that these are proper subgraphs. Then, by assumption, we have $G_1\cong F_0\not\cong H_1$, and so, again by assumption of the degree-associated reconstruction conjecture, we have a pair $(x_1,y_1)\in \omega\times\omega$ and a digraph $F_1$, such that, if 
\[S_1 = \{u\in G_1: \tau_{G_1}(u) = (x_1,y_1)\text{ and }G_1\setminus\{u\} \cong F_1\}\] and 
\[T_1 = \{v\in H_1: \tau_{H_1}(v) = (x_1,y_1)\text{ and }H_1\setminus\{v\} \cong F_1\},\] 
then $|S_1|\neq |T_1|$.

Note that both $G_1$ and $H_1$ are coloured blue, so $\forall$ can repeat his play as described above, \emph{mutatis mutandis}, with the other two colours, this time restricting himself to $G_1$ and $H_1$. Since $\exists$ must restrict her responses to $G_1$ and $H_1$ too, this produces $G_2\subset G_1$ and $H_2\subset H_1$, both coloured green say, with $G_2\not\cong H_2$. Repeating this play with $G_2$ and $H_2$, and again with $G_3\subset G_2$ and $H_3\subset H_2$, and so on, $\forall$ will, unless he wins before this point, eventually force a situation where there is $G_k\subset G$ and $H_k\subset H$ both coloured blue say, with $G_k\not\cong H_k$, and with $|G_k|=|H_k|\leq 4 = 2^2$. At this point he can force a win with the other two colours, as noted in Lemma \ref{L:rough_bound}.
\end{proof}

In Theorem~\ref{T:ZZ} we proved that a Stockmeyer pair $(Z, Z^*)$ is not a counterexample to Conjecture~\ref{conj}, because $\forall$ has a winning strategy in ${\bf G}^2(Z, Z^*)$, but the proof took some work.  It might be tempting to try to use a strategy similar to that used in the proof of Theorem~\ref{T:stronger} to obtain the same result, or at least the similar result for $\G^3$,  more easily by exploiting the fact that the degree-associated decks of $Z$ and $Z^*$ are known to be different.  However, such a proof would not work without the assumption of the degree-associated reconstruction conjecture.  Using the fact that the degree-associated decks are different, we could obtain $G_1\cong F_0\not\cong H_1$, but we would need the degree-associated reconstruction conjecture to find a suitable $F_1$ and continue.  In other words, it is not enough that the degree-associated decks of $(Z, Z^*)$ differ, we would also need to know that the degree-associated decks of various non-isomorphic pairs of subgraphs also differed.

Theorem \ref{T:stronger} has an analogue for the version of Conjecture \ref{conj} obtained by replacing digraphs with undirected graphs. 

\begin{thm}\label{T:stronger2}
If the reconstruction conjecture is true, then whenever $G$ and $H$ are graphs with $G\not\cong H$, there is a winning strategy for $\forall$ in $\G^3(G,H)$.
\end{thm}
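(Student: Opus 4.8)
The plan is to adapt the proof of Theorem~\ref{T:stronger} almost verbatim, replacing the degree-associated reconstruction conjecture with the ordinary reconstruction conjecture of Definition~\ref{D:reconstruction}. The one genuinely new ingredient is an observation that recovers the missing degree information for free. Namely, in an undirected graph the tally $\tau(u)$ of a vertex is just its degree $d(u)$, and if $G\setminus\{u\}\cong F$ then $d(u) = |E(G)| - |E(F)|$. Hence all vertices $u$ of $G$ with $G\setminus\{u\}\cong F$ automatically share a common tally. This is exactly what is needed for Corollary~\ref{C:2-colours} to bite, so we do not need a degree-associated hypothesis: the plain conjecture already pins down the relevant degrees.

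First I would dispose of the small cases. Since undirected graphs are symmetric digraphs, Proposition~\ref{P:constraints}(S1) handles $|G|\neq|H|$, and Proposition~\ref{P:1-colour}(\ref{One2}) handles $|G|=|H|\leq 2$ (a win in $\G^1$ being a fortiori a win in $\G^3$), so we may assume $|G|=|H|\geq 3$. Now apply the reconstruction conjecture to obtain a graph $F_0$ with $|S_0|\neq|T_0|$, where $S_0 = \{u\in G: G\setminus\{u\}\cong F_0\}$ and $T_0 = \{v\in H: H\setminus\{v\}\cong F_0\}$, and assume without loss of generality that $|S_0|>|T_0|$. By the observation above every vertex of $S_0$ has the same degree, so when $\forall$ colours $S_0$ red, Corollary~\ref{C:2-colours} forces $\exists$ to respond with a set $Y_0$ with $|Y_0|=|S_0|$ all of whose vertices have that same degree. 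If every $u\in Y_0$ satisfied $H\setminus\{u\}\cong F_0$ we would have $Y_0\subseteq T_0$ and hence $|S_0|=|Y_0|\leq|T_0|$, a contradiction; so there is $u_0\in Y_0$ with $H\setminus\{u_0\}\not\cong F_0$. Exactly as in Theorem~\ref{T:stronger}, $\forall$ then colours $H\setminus\{u_0\}$ blue, and a palette count via (C1) forces $\exists$ to colour $G\setminus\{v_0\}$ blue for some $v_0\in S_0$; thus $G_1 := G\setminus\{v_0\}\cong F_0\not\cong H\setminus\{u_0\} =: H_1$, with both $G_1$ and $H_1$ marked blue.

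The recursion then proceeds exactly as in Theorem~\ref{T:stronger}: blue now marks the non-isomorphic pair $(G_1,H_1)$, the deleted vertices $v_0,u_0$ becoming inert once red is reused, and $\forall$ repeats the two-round peel-off using the two colours other than the current marker, confining his play (and hence $\exists$'s) to $G_1$ and $H_1$. Each round applies the reconstruction conjecture to the current non-isomorphic pair and removes one vertex, so after finitely many rounds $\forall$ reaches a non-isomorphic pair of size at most $4$ and finishes by Lemma~\ref{L:rough_bound} using the two non-marker colours (since $\lceil\log_2 4\rceil = 2$). Only three colours are ever in play because each level reuses colours: one colour marks the current subgraph while the other two carry out the peeling.

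I expect the main obstacle to be the bookkeeping of the recursion under colour reuse, which is inherited from Theorem~\ref{T:stronger}: one must check that after blue is frozen on $G_1$ and $H_1$, reusing red and green within $G_1,H_1$ still forces $\exists$ to respond inside $H_1$ and to match tally-spectra there, so that the constraints of Proposition~\ref{P:constraints} and Corollary~\ref{C:2-colours} transfer to the subgame, and that the inert deleted vertices either match harmlessly or else already hand $\forall$ a win via (C2). By contrast, the adaptation from the digraph setting is painless: the degree identity $d(u)=|E(G)|-|E(F)|$ supplies precisely the tally data that the degree-associated hypothesis provided in Theorem~\ref{T:stronger}.
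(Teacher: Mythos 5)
Your proposal is correct and follows essentially the same route as the paper, whose own proof of this theorem is simply to repeat the argument of Theorem~\ref{T:stronger} \emph{mutatis mutandis}. Your explicit observation that $d(u)=|E(G)|-|E(F)|$ recovers the degree data for free in the undirected case is a nice touch that the paper leaves implicit (and in fact the argument already goes through using only the size constraint (S1) and the palette bookkeeping, since the undirected sets $S_0,T_0$ need no degree condition).
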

\begin{proof}
This is essentially the same as the proof of Theorem \ref{T:stronger}.
\end{proof}

Using the connection between $k$-colour Seurat games and monadic second-order logic with $k$ second-order and $2$ first-order variables, we obtain the following easy corollary.

\begin{cor}
If the reconstruction conjecture is true, then given graphs $G$ and $H$ with $G\not\cong H$, there is a sentence $\phi$ of  monadic second-order logic with up to $3$ second-order and up to $2$ first-order variables such that $G\models \phi$ and $H\not\models\phi$.
\end{cor}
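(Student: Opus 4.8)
The plan is to chain together three facts already established in the excerpt: Theorem~\ref{T:stronger2}, the observation that the strong Seurat game is at least as powerful as the ordinary one for $\forall$, and Proposition~\ref{P:MSO}(1); the final step is the known correspondence between the game $\M^k_m$ and monadic second-order logic quoted just before Proposition~\ref{P:MSO}. So the corollary is a direct composition, and I would simply walk through the parameters carefully.

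First, assuming the reconstruction conjecture, Theorem~\ref{T:stronger2} supplies $\forall$ with a winning strategy in $\G^3(G,H)$. Next I would pass from the ordinary game to the strong game: since $\hat{\G}^3(G,H)$ is obtained from $\G^3(G,H)$ solely by adjoining the extra winning conditions (C3) and (C4) for $\forall$, any winning strategy for $\forall$ in $\G^3(G,H)$ is \emph{a fortiori} a winning strategy in $\hat{\G}^3(G,H)$, because additional winning conditions can only make it easier for $\forall$ to win (this is exactly the remark that $\hat{\G}^k$ is at least as powerful as $\G^k$). Hence $\forall$ has a winning strategy in $\hat{\G}^3(G,H)$.

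Then I would apply Proposition~\ref{P:MSO}(1) with $k=3$, which turns a winning strategy for $\forall$ in $\hat{\G}^3(G,H)$ into a winning strategy for $\forall$ in $\M^3_2(G,H)$. Finally, invoking the cited fact (see e.g.~\cite[Section~7]{Lib04}) that $\M^k_m$ captures expressibility in monadic second-order logic with $k$ second-order and $m$ first-order variables, a winning strategy for $\forall$ in $\M^3_2(G,H)$ yields a monadic second-order sentence $\phi$ using at most $3$ second-order and at most $2$ first-order variables with $G\models\phi$ and $H\not\models\phi$, which is precisely the claim.

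I do not expect any genuine obstacle here, as the result is a straightforward concatenation of earlier theorems. The only points deserving a moment's care are the transition from $\G^3$ to $\hat{\G}^3$ (recording that strengthening $\forall$'s winning conditions preserves his winning strategies) and the bookkeeping of the two parameters, ensuring that the $3$ colours become $3$ second-order variables and the $2$ pebble pairs become $2$ first-order variables as we move along the chain.
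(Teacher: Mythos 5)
Your proof is correct and follows essentially the same route as the paper: Theorem~\ref{T:stronger2} gives $\forall$ a winning strategy in $\G^3(G,H)$, Proposition~\ref{P:MSO}(1) transfers this to $\M^3_2(G,H)$, and the quoted correspondence with monadic second-order logic yields the sentence $\phi$. You are in fact slightly more careful than the paper's own two-line proof, which applies Proposition~\ref{P:MSO}(1) directly to $\G^3$ even though it is stated for $\hat{\G}^3$; your explicit remark that a winning strategy for $\forall$ in $\G^3$ is \emph{a fortiori} one in $\hat{\G}^3$ is exactly the right way to bridge that small gap.
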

\begin{proof}
By Proposition \ref{P:MSO}, if $\forall$ has a winning strategy in $\G^3(G,H)$ then he also has one in $\M^3_2(G,H)$. The result follows immediately, as Theorem \ref{T:stronger2} states that if the reconstruction conjecture is true, then $\forall$ has a winning strategy in $\G^3(G,H)$ whenever $G\not\cong H$.
\end{proof}

It is known that no fixed finite number $k$ of variables  is sufficient to distinguish all non-isomorphic graphs in the first-order counting logic $\mathsf{C}^k$ \cite{CFI92}, but we do not know of any similar result for monadic second-order logic, though this logic has been studied extensively in the context of graph theory (see e.g. \cite{CouEng12}). If such a result existed it would, by the above corollary, disprove the reconstruction conjecture.

We do not know if converses hold for either Theorem \ref{T:stronger} or Theorem \ref{T:stronger2}. In other words, if either Conjecture \ref{conj} or its undirected analogue are equivalent to the degree-associated reconstruction conjecture or the reconstruction conjecture, respectively. Suppose for the sake of argument that we want to prove the converse to Theorem \ref{T:stronger2}. We might reason as follows. We are assuming that $\forall$ has a winning strategy in  $\G^3(G,H)$ whenever $G\not\cong H$, and we want to prove that the reconstruction conjecture follows from this. So, in other words, we want to prove that if $G\not\cong H$, then the decks of $G$ and $H$ are different. By our assumption we can suppose that $\forall$ has a winning strategy in  $\G^3(G,H)$, so if we could prove that $\forall$ having such a strategy implies the decks must be different then we would have have our proof. However, this seems to be easier said than done. Indeed, we know it is false in the case of digraphs, because we have seen that the Stockmeyer families produce pairs of graphs where $\forall$ has a winning strategy in the 2-colour game (hence he also has a winning strategy in the 3-colour game), but which nevertheless have the same decks (though different degree-associated decks).

The exact relationship between the strength of looking at decks or degree-associated decks and the existence of a winning strategy for $\forall$ in some $k$-colour game as a means of distinguishing non-isomorphic graphs is also unclear. We saw in Section \ref{S:games}  that there are graphs with the same deck that can be distinguished in the  2-colour game,  but beyond this we are currently in the dark.


\end{document}